\setlist[enumerate]{wide,label=\emph{(\roman*)}}
\newtheorem{theorem}{Theorem}[section]
\newtheorem{lemma}[theorem]{Lemma}
\newtheorem{proposition}[theorem]{Proposition}
\theoremstyle{definition}
\newtheorem{definition}[theorem]{Definition}
\newtheorem{remark}[theorem]{Remark}
\numberwithin{equation}{section}
\newcommand{\R}{\mathbb{R}}
\def\bell{\boldsymbol \ell}
\def\bb{\boldsymbol{b}}
\def\be{{\boldsymbol{\rm{e}}}}
\def\E{\dot{H}^{1}\times L^{2}}
\def\pshbb1#1#2{\left(#1,#2\right)_{\dot H^1_{{{\ell}}_1}}}
\def\bell{\boldsymbol{\ell}}
\def\RR{\mathbb{R}^{5}}
\def\d{{\rm d}}
\def\E{\mathcal{H}}
\DeclareMathOperator\SPAN{Span}
\DeclareMathOperator\CARD{Card}
\begin{document}

\parindent=0pt

	\title[multi-solitons for 5D nonlinear wave equation]{Construction of excited multi-solitons for the 5D energy-critical wave equation}
	\author{XU YUAN}
	
	\address{CMLS, \'Ecole polytechnique, CNRS, Institut Polytechnique de Paris, F-91128 Palaiseau Cedex, France.}
	
	\email{xu.yuan@polytechnique.edu}
	\begin{abstract}
	For the 5D energy-critical wave equation, we construct excited $N$-solitons with collinear speeds, \emph{i.e.} solutions $u$ of the equation such that
	\begin{equation*}
	\lim_{t\to+\infty}\bigg\|\nabla_{t,x}u(t)-\nabla_{t,x}\bigg(\sum_{n=1}^{N}Q_{n}(t)\bigg)\bigg\|_{L^{2}}=0,
	\end{equation*}
	where for $n=1,\ldots,N$, $Q_n(t,x)$ is the Lorentz transform of a non-degenerate and sufficiently decaying excited state, each with different but collinear speeds. The existence proof follows the ideas of Martel-Merle~\cite{MMwave1} and
	C\^ote-Martel~\cite{CMakg} developed for the energy-critical wave and nonlinear Klein-Gordon equations. In particular, we
	rely on an energy method and on a general coercivity property for the linearized operator.
	\end{abstract}
	\maketitle
\section{Introduction}
\subsection{Main result}
We consider the energy-critical focusing wave equation in dimension 5,
\begin{equation}\label{wave}
\left\{ \begin{aligned}
&\partial_t^2 u - \Delta u - |u|^{\frac{4}{3}}u = 0, \quad (t,x)\in [0,\infty)\times \mathbb{R}^5,\\
& u_{|t=0} = u_0\in \dot H^1,\quad 
\partial_t u_{|t=0} = u_1\in L^2.
\end{aligned}\right.
\end{equation}
Recall that the Cauchy problem for equation~\eqref{wave} is locally well-posed in the energy space $ {\dot{H}}^{1}\times L^{2}$. See $\emph{e.g.}$~\cite{KM} and references therein. Let $f(u)=|u|^{\frac{4}{3}}u$ and $F(u)=\frac{3}{10}|u|^{\frac{10}{3}}$. For any $\dot{H}^{1}\times L^{2}$ solution $\vec{u}=(u,\partial_{t}u)$, the energy $E$ and the momentum $P$ are conserved along the flow, where
\begin{equation*}
E(u,\partial_{t} u)=\frac{1}{2}\int_{\mathbb{R}^{5}}\bigg(|\nabla u|^{2}+|\partial_{t} u|^{2}-2F(u)\bigg)\d x,\quad P(u,\partial_{t} u)=\frac{1}{2}\int_{\mathbb{R}^{5}}\big(\partial_{t}u \nabla u\big) \d x.
\end{equation*}
Looking for stationnary solutions $u(t,x)=q(x)$ of~\eqref{wave} in $\dot{H}^{1}$, 
we reduce to the Yamabe equation
\begin{equation}\label{Yamabe}
\Delta q+f(q)=0\quad \mbox{in}\ \RR.
\end{equation}
Denote
\begin{equation*}
\Sigma=\left\{q\in \dot{H}^{1}(\RR)/0: q\ \mbox{satisfies}~\eqref{Yamabe}\right\}.
\end{equation*}
For $q\in \Sigma$ and $\bell\in \R^{5}$ such that $|\bell|<1$, let
\begin{equation}\label{defQl}
q_{\bell}(x)=q\left(\frac{1}{|\bell|^{2}}\left(\frac{1}{\sqrt{1-|\bell|^{2}}}\bell\cdot x\right)\bell+x\right),
\end{equation}
then $u(t,x)=q_{\bell}(x-\bell t)$ is a global, bounded solution of~\eqref{wave}.

It is known (see \emph{e.g.}~\cite{Aub,Talen}) that
the unique (up to scaling invariance and sign change) \emph{radially symmetric} element of $\Sigma$ is the \emph{ground state} $W$ given explicitly by 
\begin{equation*}
W(x)=\left(1+\frac{|x|^{2}}{15}\right)^{-\frac{3}{2}}.
\end{equation*}
The existence of non-radially symmetric, sign changing elements of $\Sigma$ with arbitrary large energy
was first proved by Ding~\cite{Ding}, using variational arguments. Functions $q\in \Sigma$ with $q\ne W$ 
(up to invariances) are usually called \emph{excited states}.

For any $q\in \Sigma$, we denote the linearized operator around $q$ by
\begin{equation*}
\mathcal{L}=-\Delta-f'(q).
\end{equation*}
Set 
\begin{equation*}
\mathcal{Z}_{q}=\left\{f\in \dot{H}^{1}\ \mbox{such that}\ \mathcal{L}f=0\right\},
\end{equation*}
and
\begin{multline*}
\widetilde{{\mathcal{Z}}}_{q}=
 \SPAN \bigg\{ \frac{3}{2}q+x\cdot \nabla q;
(x_{i}\partial_{x_{j}}q-x_{j}\partial_{x_{i}}q),1\le i<j\le 5;\\
 \partial_{x_{i}}q ;  -3x_{i}q+|x|^{2}\partial_{x_{i}}q-2x_{i} (x\cdot \nabla q),1\le i\le 5\bigg\}.
\end{multline*}
The function space $\widetilde{\mathcal{Z}}_{q}\subset \mathcal{Z}_{q}$ is the null space of $\mathcal{L}$ that is generated by a family of explicit transformations (see~\cite[Lemma 3.8]{DKMASNSP} and \S\ref{SS:trans}).

We will need the following definitions.
\begin{definition}
	Let $q\in \Sigma$.
	\begin{enumerate}
		\item[$\rm{(i)}$] $q$ is called a \emph{non-degenerate state} if $\mathcal{Z}_{q}=\widetilde{\mathcal{Z}}_{q}$.
		\item[$\rm{(ii)}$] $q$ is called a \emph{decaying state} if 
$		|q(x)|\lesssim \langle x \rangle^{-4}$ for all $x\in \RR$.
	\end{enumerate}
\end{definition}
\begin{remark}
The existence of non-degenerate sign-changing states was proved in~\cite{PMPP1,PMPP2,MW}.
As noticed in \cite[Remark 3.2]{DKMASNSP},
the existence of \emph{non-degenerate, decaying states} then follows by translation and the Kelvin transformation.
\end{remark}

The main goal of this article is to construct $N$-excited states.
\begin{theorem}[Existence of multi-excited states]\label{main:thm}
Let $N\ge 2$ and let ${\boldsymbol{\rm{e}}}$ be a unit vector of $\R^5$. For $n\in \{1,\ldots,N\}$, let $\bell_{n}\in \R^{5}$ be such that
\begin{equation*}
\mbox{$\bell_{n}=\ell_{n}{\boldsymbol{\rm{e}}}$ where $-1<\ell_{n}<1$ and 
for all $n\ne n'$, $\ell_{n}\ne \ell_{n'}$.}
\end{equation*}
Let $q_{1},\ldots,q_{N}$ be non-degenerate decaying states of~\eqref{Yamabe}. Then there exist $T_{0}>0$ and a solution $\vec{u}=(u,\partial_{t}u)$ of~\eqref{wave} in the energy space $\dot{H}^{1}\times L^{2}$, defined on $[T_{0},+\infty)$ such that
\begin{equation}\label{thm:est}
\lim_{t\to +\infty}\bigg\|\vec{u}(t)-\sum_{n=1}^{N}\vec{Q}_{n}(t)
\bigg\|_{\dot{H}^{1}\times L^{2}}=0,
\end{equation}
where for $n=1,\ldots,N$,
\begin{equation*}
\vec{Q}_{n}(t,x)=\left(
\begin{array}{c} 
q_{n,\bell_{n}}\left(x-\bell_{n} t\right)\\
-\ell_{n}\partial_{x_{1}}q_{n,\bell_{n}}\left(x-\bell_{n} t\right)
\end{array}\right).
\end{equation*}
\end{theorem}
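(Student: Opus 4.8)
The plan is to follow the compactness scheme of Martel--Merle~\cite{MMwave1} and C\^ote--Martel~\cite{CMakg}. Fix an increasing sequence $S_{m}\to+\infty$, solve~\eqref{wave} \emph{backwards} from $t=S_{m}$ with data $\sum_{n=1}^{N}\vec{Q}_{n}(S_{m})$ modified by a small finite-dimensional correction, and obtain $\vec{u}$ as a weak limit in $\dot{H}^{1}\times L^{2}$ of these solutions evaluated at a fixed time $T_{0}$. Everything reduces to a bootstrap estimate: after a suitable choice of the correction, each such solution is defined on $[T_{0},S_{m}]$ and satisfies $\|\vec{u}(t)-\sum_{n}\vec{Q}_{n}(t)\|_{\dot{H}^{1}\times L^{2}}\lesssim t^{-\theta}$ for some $\theta>0$, \emph{uniformly in $m$}.

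The linear input is the structure of $\mathcal{L}_{n}=-\Delta-f'(q_{n})$ and of the matrix operator governing~\eqref{wave} linearized around $\vec{Q}_{n}$ (built from $\mathcal{L}_{n}$ via the Lorentz change of variables, cf.~\eqref{defQl}). Non-degeneracy of $q_{n}$ gives $\mathcal{Z}_{q_{n}}=\widetilde{\mathcal{Z}}_{q_{n}}$, so the generalized kernel of that matrix operator is finite-dimensional and spanned by the explicit symmetry-generated directions; the decay assumption $|q_{n}(x)|\lesssim\langle x\rangle^{-4}$ is precisely what places $\widetilde{\mathcal{Z}}_{q_{n}}$ --- in particular the conformal-type generators $-3x_{i}q_{n}+|x|^{2}\partial_{x_{i}}q_{n}-2x_{i}(x\cdot\nabla q_{n})$ --- in the energy space. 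Moreover $\mathcal{L}_{n}$ has finitely many negative eigenfunctions, giving rise to finitely many exponential modes $\vec{Y}_{n,j}^{\pm}$ of the linearized flow with rates $\nu_{n,j}>0$, and one has the general coercivity property: $\langle\mathcal{L}_{n}f,f\rangle\gtrsim\|f\|_{\dot{H}^{1}}^{2}$ whenever $f$ is orthogonal to $\widetilde{\mathcal{Z}}_{q_{n}}$ and to the negative eigenfunctions. Transported through the Lorentz change of variables, this gives the coercivity --- modulo the finitely many exponential-mode directions --- of the quadratic form $\Phi_{n}$ attached to the linearization of~\eqref{wave} around $\vec{Q}_{n}$, and of a ``summed'' version localized near the individual bumps.

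For $\vec{u}$ close to $\sum_{n}\vec{Q}_{n}(t)$ one then sets up a modulated decomposition: introducing parameters (scaling, translation, rotation, Lorentz velocity) near their reference values, write $\vec{u}(t)=\sum_{n}\vec{Q}_{n}[\Gamma_{n}(t)]+\vec{\varepsilon}(t)$ with $\Gamma_{n}(t)$ chosen, by the implicit function theorem, so that $\vec{\varepsilon}(t)$ is orthogonal to the generalized kernels; this yields modulation estimates bounding the parameter derivatives by $\|\vec{\varepsilon}(t)\|_{\dot{H}^{1}\times L^{2}}$ plus soliton--soliton interaction terms. The energy method is then run on
\[
\mathcal{J}(t)=E(\vec{u}(t))-\sum_{n=1}^{N}\bell_{n}\cdot P_{n}(\vec{u}(t)),\qquad
P_{n}(\vec{u})=\frac{1}{2}\int_{\RR}\chi_{n}(t,x)\,\partial_{t}u\,\nabla u\,\d x,
\]
where $\chi_{n}$ is a cutoff transported with the $n$-th soliton, the family forming an approximate partition of unity whose gradients live in the regions where distinct solitons overlap. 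Using that $\vec{Q}_{n}$ is a critical point of $E-\bell_{n}\cdot P$, that $E$ and $P$ are conserved, and that those overlap regions recede --- the speeds $\ell_{n}$ being pairwise distinct and collinear, the solitons separate linearly in $t$ --- one obtains that $\mathcal{J}$ is constant up to errors that are integrable in $t$, and that at each time $\mathcal{J}(t)=\mathrm{const}+\sum_{n}\Phi_{n}^{\chi_{n}}(\vec{\varepsilon}(t))+O(\text{errors})$.

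Closing the bootstrap: assuming $\|\vec{\varepsilon}(t)\|_{\dot{H}^{1}\times L^{2}}\le t^{-\theta}$ on a maximal subinterval, comparing $\mathcal{J}(t)$ with its value at $t=S_{m}$, then using the coercivity of the $\Phi_{n}^{\chi_{n}}$ and the modulation estimates improves this bound to $\frac{1}{2}t^{-\theta}$ for every part of $\vec{\varepsilon}$ except its components $a_{n,j}(t)$ along the $\vec{Y}_{n,j}^{\pm}$, which satisfy ODEs of the form $a_{n,j}'=\pm\nu_{n,j}\,a_{n,j}+O(t^{-1-\theta})$. Since the bootstrap can only fail through these finitely many components, a Brouwer-type (no-retraction) argument provides for each $m$ a correction $\vec{b}_{m}$ in the span of the $\vec{Y}_{n,j}^{\pm}$ --- with $|\vec{b}_{m}|\to0$ --- for which the bootstrap holds on all of $[T_{0},S_{m}]$. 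Extracting a weak limit of $\vec{u}_{m}(T_{0})$ and using weak continuity of the flow gives a solution on $[T_{0},+\infty)$ with $\|\vec{u}(t)-\sum_{n}\vec{Q}_{n}[\Gamma_{n}(t)]\|_{\dot{H}^{1}\times L^{2}}\lesssim t^{-\theta}$; integrating the modulation equations, $\Gamma_{n}(t)$ converges to its reference value and $\|\sum_{n}\vec{Q}_{n}[\Gamma_{n}(t)]-\sum_{n}\vec{Q}_{n}(t)\|_{\dot{H}^{1}\times L^{2}}\to0$, which is~\eqref{thm:est}. The main obstacle is the energy estimate itself: the profiles being only polynomially decaying (the problem is energy-critical, so the solitons are not compactly supported), the cutoff errors in $\mathcal{J}'$ and the nonlinear soliton--soliton and soliton--$\vec{\varepsilon}$ interactions must be shown to be integrable in $t$ --- this is exactly where the decay rate $\langle x\rangle^{-4}$ and the linear-in-$t$ separation of the collinear solitons are used --- and the scalar coercivity of $\mathcal{L}_{n}$ must be correctly transported through the Lorentz boost to control $\Phi_{n}$.
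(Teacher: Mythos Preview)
Your overall scheme---backward Cauchy problem from $S_m\to\infty$, bootstrap plus energy--momentum functional with cutoffs, Brouwer argument for the finitely many unstable directions, then weak limit---matches the paper. But the modulation you describe is not the one the paper uses, and this is where the paper's main new ingredient sits.

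You propose to modulate \emph{geometric} parameters $\Gamma_n(t)$ (``scaling, translation, rotation, Lorentz velocity'') and write $\vec u=\sum_n\vec Q_n[\Gamma_n]+\vec\varepsilon$. Two remarks. First, the kernel $\widetilde{\mathcal Z}_{q_n}$ is generated by scaling, translation, rotation and the \emph{Kelvin} (conformal inversion) directions---there is no ``Lorentz velocity'' direction in $\ker\mathcal L_n$; you acknowledge the conformal generators earlier, so this looks like a slip, but it matters because modulating the Kelvin parameter geometrically is awkward. Second, and more substantively, the paper does \emph{not} modulate the solitons at all: it keeps each $\vec Q_n$ at its reference trajectory and instead adds a linear correction along the kernel,
\[
\vec u=\sum_n\vec Q_n+\sum_{(n,k)\in I^0}b_{n,k}\,\vec\Psi_{n,k}+\vec\varepsilon,
\]
imposing $(\vec\varepsilon,\vec\Psi_{n,k})_{\mathcal H}=0$. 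The price is an extra error $G$ in the $\vec\varepsilon$-equation whose dominant piece is $G_3=\frac12\sum b_{n,k}b_{n,k'}f''(Q_n)\Psi_{n,k}\Psi_{n,k'}$, quadratic in $b$. Since in the bootstrap $|b|\sim t^{-9/5}$ while $\|\vec\varepsilon\|\sim t^{-29/10}$, this term is too large to absorb naively. The paper handles it by (a) subtracting a correction $\mathcal G=2\int\varepsilon\,G_3$ from the energy functional, and (b) proving the algebraic cancellation $\int f''(q)\Psi_k\Psi_{k'}\Psi_{k''}=0$ (Lemma~2.1(iii), obtained by differentiating the transformation group twice), which kills the dangerous contribution in $\frac{d}{dt}\mathcal G$. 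This cancellation is the point that distinguishes the excited-state construction from the ground-state case.

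Your geometric-modulation route is in principle viable and would trade this cancellation trick for other bookkeeping (tracking 21 parameters per soliton, including Kelvin; the ``constant'' part of $\mathcal J$ then depends on $\Gamma_n$ and drifts). What it buys is that each $\vec Q_n[\Gamma_n]$ solves the elliptic equation exactly, so no $G$-type error appears; what it costs is a heavier modulation machinery and the extra step you mention at the end (integrating the modulation equations to recover the reference parameters). The paper's additive approach avoids all geometric modulation, at the cost of the $\mathcal G$-correction and the cancellation lemma---these are the ideas you would want to flag if you were summarizing the paper's proof.
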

\begin{remark}
By invariance by rotation in $\R^5$, we assume without loss of generality that $\boldsymbol{\rm{e}}$ is $\boldsymbol{\rm{e}}_1$, the first vector of the canonical basis of $\R^5$.
\end{remark}
\begin{remark}
The \emph{non-degeneracy} condition ensures that the null space of $\mathcal{L}$ is generated by 21-parameter transformations, which is needed in our proof to obtain cancellation properties for the linearized operator around the excited states. See more details in \S\ref{SS:trans} and \S\ref{SS:Spectral}.
 \end{remark}
\begin{remark}
The \emph{decay} condition ensures that the  nonlinear interactions between two excited states of different speeds is of order at most $t^{-4}$. This rate allows us to close the energy estimates (see more details in \S\ref{SS:Boot} and \S\ref{sec:ener}). 
At this point, we do not known how to prove Theorem~\ref{main:thm} without this condition.
In particular, we do not construct multi-solitons partly based on \emph{non-degenerate decaying state} and ground state, since the interaction caused by the ground state would be $t^{-3}$.
\end{remark}
\begin{remark}
Using the Lorentz transformation, the existence result extends to the case of $2$-solitons for any different, possibly noncollinear speeds $\bell_{1}$ , $\bell_{2}$. See~\cite[Section 5]{MMwave1}.
\end{remark}

The constructions of \emph{asymptotic multi-solitons} for dispersive and wave equations have been the subject of several previous works, for both stable and unstable solitons. First results in non-integrable contexts were given by Merle~\cite{Mnls} for the $L^{2}$ critical nonlinear Schr\"odinger equation (NLS) and Martel~\cite{Ma} for the subcritical and critical generalized Korteweg-de Vries equations (gKdV). Next,
the strategy of these works was extended to the case of exponentially unstable solitons: see
C\^ote, Martel and Merle~\cite{CMM} for the construction of multi-solitons for supercritical (gKdV) and
(NLS), and Combet~\cite{Com} for a classification result for supercritical (gKdV). We refer to~\cite{CMakg,MMwave1} for results on the existence of multi-solitons for the nonlinear Klein-Gordon (NLKG) and 5D energy critical wave equation that inspired the present work. See also~\cite{CMkg,MMnls,MRT,XY} for other existence results.

The article is organized as follows. Section~\ref{S:2} describes the spectral theory for any \emph{non-degenerate decaying state} $q$. Section~\ref{S:3} introduces technical tools involved in a dynamical approach to the $N$-soliton problem for~\eqref{wave}: estimates of the non-linear interactions between solitons, decomposition by modulation and parameter estimates. Finally, Theorem~\ref{main:thm} is proved in Section~\ref{S:4} by energy estimates and a suitable compactness argument.
\subsection{Notation}
We denote
\begin{equation*}
(u,v)_{L^{2}}=\int_{\R^{5}}uv\d x,\quad (u,v)_{\dot{H}^{1}}=\int _{\R^{5}}(\nabla u\cdot\nabla v)\d x.
\end{equation*}
For
\begin{equation*}
\vec{u}=\left(\begin{array}{c}u_{1} \\u_{2}\end{array}\right),\quad \vec{v}=\left(\begin{array}{c}v_{1} \\v_{2}\end{array}\right),
\end{equation*}
denote
\begin{equation*}
\big(\vec{u},\vec{v})_{L^{2}}=\sum_{k=1,2}\big(u_{k},v_{k})_{L^{2}},\quad 
\|\vec{u}\|_{L^{2}}=\left(\vec{u},\vec{u}\right)_{L^{2}},
\end{equation*}
\begin{equation*}
\big(\vec{u},\vec{v})_{\E}=\big(u_{1},v_{1})_{\dot{H}^{1}}+(u_{2},v_{2})_{L^{2}},\quad 
\|\vec{u}\|_{\E}=\big(\vec{u},\vec{u})_{\E}.
\end{equation*}
When $x_{1}$ is seen as a specific coordinate, denote
\begin{equation*}
x'=(x_{2},x_{3},x_{4}, x_{5}),\quad
\overline{\nabla}=\left(\partial_{x_{2}},\partial_{x_{3}},\partial_{x_{4}},\partial_{x_{5}}\right),\quad\mbox{and}\quad
\bar{\Delta}=\sum_{i=2}^{5}\partial_{x_{i}}^{2}.
\end{equation*}
For $-1<\ell<1$, set 
\begin{equation*}
x_{\ell}=\left(\frac{x_{1}}{\sqrt{1-\ell^{2}}},x'\right)\quad \mbox{for}\ x=(x_{1},x')\in \R^{5}.
\end{equation*}
Let $X_{1}(\R^{5})=\dot{H}^{2}(\R^{5})\cap \dot{H}^{1}(\R^{5})$ and $X=X_{1}(\R^{5})\times H^{1}(\R^{5})$.

Denote by $\mathcal{O}_{5}$ be the orthogonal group in dimension 5. Let $\mathcal{SO}_{5}$ be the special orthogonal group, \emph{i.e.} the subgroup of the elements of $\mathcal{O}_{5}$ with determinant 1.
\subsection*{Acknowledgements}
The author wants to thank his advisor Yvan Martel for his
constant help and unfailing encouragement throughout the completion of this work.

\section{Spectral theory for non-degenerate state}\label{S:2}
\subsection{Transforms of stationary solutions}\label{SS:trans}
Following~\cite{DKMASNSP}, we recall that~\eqref{Yamabe} is invariant under the following four transformations:

\begin{enumerate}
	\item[$\rm{(1)}$] translations: If $q\in \Sigma$ then $q(x+a)\in \Sigma$, for all $a\in \RR$;
	\item[$\rm{(2)}$] dilation: If $q\in \Sigma$ then $\lambda^{\frac{3}{2}}q(\lambda x)\in \Sigma$ for all $\lambda>0$;
	\item[$\rm{(3)}$] orthogonal transformation: If $q\in \Sigma$ then $q(Px)\in \Sigma$ where $P\in \mathcal{O}_{5}$;
	
\item[$\rm{(4)}$] Kelvin transformation: If $q\in \Sigma$ then $|x|^{-3}q\big(\frac{x}{|x|^{2}}\big)\in \Sigma$.
\end{enumerate}
Let $M$ be the group of one-to-one maps of $\RR$ generated by the above four transformations.
Then, from \cite[Section 3]{DKMASNSP}, $M$ generates a $21$-parameter family of transformations in a neighborhood of the identity. More precisely, we give explictly the formula for this $21$-parameter family of transformations. Set the one-to-one map 
\begin{equation*}
\tau: \left\{ (i,j)\in \mathbb{N}^{2}: 1\le i<j\le 5 \right\}\to \left\{1,2,\ldots,10\right\},
\end{equation*} 
with
\begin{equation*}
\tau(i,j)=3i+j-\frac{(i-1)(i-2)}{2}-4.
\end{equation*}
For $c=\left(c_{1},c_{2},\ldots,c_{10}\right)$, set
\begin{equation*}
A_{c}=\left[a^{c}_{i,j}\right]_{1\le i\le j\le 5},\quad e^{A_{c}}=\sum_{n=0}^{\infty}\frac{A^{n}_{c}}{n!}\in \mathcal{SO}_{5},
\end{equation*}
where $a_{i,i}^{c}=0$, $a_{i,j}=c_{\tau(i,j)}$ if $i<j$ and $a_{i,j}=-c_{\tau(j,i)}$ if $j<i$. Note that
this definition provides a parametrization of $\mathcal{SO}_{5}$ by $\R^{10}$ 
in a neighborhood of the identity matrix.

For $\mathcal{A}=\left(\lambda,\xi,a,c\right)\in (0,\infty)\times \RR\times\RR\times \R^{10}$, we introduce the following transform $\theta_{\mathcal{A}}\in M$,
\begin{equation*}
\theta_{\mathcal{A}}(f)(x)=\lambda^{\frac{3}{2}}\left|\frac{x}{|x|}-a|x|\right|^{-3}
f\left(\xi+\frac{\lambda e^{A_{c}}\left(x-a|x|^{2}\right)}{1-2\langle a,x\rangle +|a|^{2}|x|^{2}}\right)\quad 
\mbox{for all}\ f\in \dot{H}^{1}.
\end{equation*}
Observe that for all $q\in \Sigma$, $\widetilde{\mathcal{Z}}_{q}$ is generated by taking partial derivatives of $\theta_{\mathcal{A}}(q)$ with respect to $\mathcal{A}=(\lambda,\xi,a,c)$ at $\mathcal{A}=(1,0,0,0)$ (see details in~\cite[Lemma 3.8]{DKMASNSP}).

\subsection{Spectral analysis of  $\mathcal{L}$}\label{SS:Spectral}
Following~\cite{DKMASNSP}, we gather some spectral properties of the linearized operator.
For $q$ be a non-degenerate decaying state and collinear speeds $\bell=\ell {\be}_{1}$, let 
\begin{equation}\label{equ:Ql}
q_{\ell}=q(x_{\ell})=q\left(\frac{x_{1}}{\sqrt{1-\ell^{2}}},x'\right),\quad 
-(1-\ell^{2})\partial^{2}_{x_{1}}q_{\ell}-\bar{\Delta} q_{\ell}+|q_{\ell}|^{\frac{4}{3}}q_{\ell}=0.
\end{equation}
Define the following operator,
\begin{equation}\label{defL}
\mathcal{L}=-\Delta-f'(q),\quad  \mathcal{L}_{\ell}=-(1-\ell^{2})\partial^{2}_{x_{1}}-\bar{\Delta}-f'(q_{\ell}),
\end{equation}
and
\begin{equation}\label{defH}
\mathcal{H}_{\ell}=\left(\begin{array}{cc}-\Delta-f'(q_{\ell}) & -\ell\partial_{x_{1}} \\\ell\partial_{x_{1}} & 1\end{array}\right),\quad 
{\mathrm{J}}=\left(\begin{array}{cc} 0 & 1 \\ -1 & 0\end{array}\right).
\end{equation}

\begin{lemma}[Spectral properties of $\mathcal{L}$]\label{le:Q}
	\emph{(i) Spectrum}.
	The self-adjoint operator $\mathcal{L}$ has essential spectrum $[0,+\infty)$, a finite number $J\ge 1$ of negative eigenvalues $($counted with multiplicity$)$, and its kernel is 
	$\widetilde{\mathcal{Z}}_{q}$. Let $(Y_{j})_{j\in\{1,\ldots,J\}}$ be an $L^{2}$ orthogonal family of eigenvectors of $\mathcal{L}$ corresponding to the eigenvalues $(-\lambda_{j}^{2})_{j\in \{1,\ldots,J\}}$, $i.e.$ for $j,j'\in \{1,\ldots,J\}$
	\begin{equation*}
	\mathcal{L}Y_{j}=-\lambda_{j}^{2}Y_{j},\quad \lambda_{j}>0\quad 
	\mbox{and}\quad \left(Y_{j},Y_{j'}\right)_{L^{2}}=\delta_{jj'}.
	\end{equation*}
	Let $\Psi_{k}$ be an $\dot{H}^{1}$-orthogonal basis of ${\rm{Ker}}\mathcal{L}$, $i.e.$ for any $k,k'\in \{1,\ldots,K\}$
	\begin{equation*}
	(\Psi_{k},\Psi_{k'})_{\dot{H}^{1}}=\delta_{kk'}\quad  \mbox{and}\quad {\rm{Span}}(\Psi_{1},\ldots,\Psi_{K})=\widetilde{\mathcal{Z}}_{q}.
	\end{equation*}
	It holds, for all $k=1,\ldots,K$, $j=1,\ldots,J$ and $\alpha\in \mathbb{N}^{5}$ with $|\alpha|\le 2$, on $\RR$,
	\begin{equation*} 
	\left|\partial_{x}^{\alpha}Y_{j}(x)\right|\lesssim e^{-\lambda_{j}|x|}\quad \mbox{and}\quad 
	\left|\partial_{x}^{\alpha}\Psi_{k}(x)\right|\lesssim \langle x\rangle^{-(3+\alpha)}.
	\end{equation*}
	
	\emph{(ii) Nonnegativity under $(Y_{j})_{j=1,\ldots,J}$ orthogonality.}
It holds	\begin{equation*}
	(\mathcal{L}v,v)_{L^{2}}\ge 0\quad \mbox{for all $ v\in N^{\perp}$,}
	\end{equation*}
	where
	\begin{equation*}
	N^{\perp}=\left\{v\in \dot{H}^{1}:(v,Y_{j})_{L^{2}}=0,\ \mbox{for any}\ j=1,\ldots,J\right\}.
	\end{equation*}
	\emph{(iii) Cancellation.} It holds
	\begin{equation*}
	\int_{\RR}f''(q) \Psi_{k}  \Psi_{k'}\Psi_{k''}\d x=0\quad \mbox{for all}\ k,k',k''= 1,\ldots,K.
	\end{equation*}
\end{lemma}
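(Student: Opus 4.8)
The plan is to prove the three parts of Lemma~\ref{le:Q} largely by transporting known spectral facts about $\mathcal{L}=-\Delta-f'(q)$ from the literature on the energy-critical wave equation (notably \cite{DKMASNSP}) and then deriving the cancellation identity (iii) from the group structure recalled in \S\ref{SS:trans}.

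\textbf{Part (i).} Since $q$ is a decaying state, $f'(q)=\frac{7}{3}|q|^{4/3}\in L^{5/2}$ is a relatively compact perturbation of $-\Delta$ on $\dot H^1$, so by Weyl's theorem the essential spectrum is $[0,+\infty)$ and there are only finitely many negative eigenvalues, each of finite multiplicity; I would invoke the standard reference for this (Strauss-type arguments, or directly \cite{DKMASNSP}). That $J\ge 1$ follows from $(\mathcal{L}(\frac32 q+x\cdot\nabla q),\cdot)$ being degenerate while $q$ itself is an admissible test function with $(\mathcal{L}q,q)_{L^2}=-\frac43\int |q|^{10/3}<0$. The identification $\ker\mathcal{L}=\widetilde{\mathcal{Z}}_q$ is exactly the non-degeneracy hypothesis on $q$. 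For the decay estimates: $Y_j$ solves $(-\Delta+\lambda_j^2)Y_j=f'(q)Y_j$ with $f'(q)\to 0$, so a standard Agmon/comparison argument (Green's function of $-\Delta+\lambda_j^2$ decays like $e^{-\lambda_j|x|}$, bootstrap) gives $|Y_j|\lesssim e^{-\lambda_j|x|}$, and elliptic regularity upgrades this to derivatives of order $\le 2$. For $\Psi_k\in\widetilde{\mathcal Z}_q$: each basis element is an explicit combination of $q$, $\nabla q$, $x\cdot\nabla q$, the rotation fields, and $|x|^2\partial_{x_i}q-3x_iq-2x_i(x\cdot\nabla q)$; since $|q(x)|\lesssim\langle x\rangle^{-4}$ and elliptic regularity gives $|\partial^\alpha q|\lesssim\langle x\rangle^{-4-|\alpha|}$ (or at least $\langle x\rangle^{-3-|\alpha|}$, which is what is needed after multiplication by the polynomial weights), one checks term by term that the worst decay is $\langle x\rangle^{-(3+|\alpha|)}$, coming from the $x\cdot\nabla q$ and $|x|^2\partial q$ contributions.

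\textbf{Part (ii).} This is the coercivity-up-to-negative-directions statement. Let $N=\SPAN(Y_1,\dots,Y_J)$ and let $N^\perp$ be its $L^2$-orthogonal complement intersected with $\dot H^1$. Decompose $\dot H^1 = N \oplus \ker\mathcal L \oplus P$, where $P$ is the spectral subspace for the positive part of the spectrum; on $P$, $\mathcal L \ge c>0$ in the form sense away from $0$, but near the bottom of the essential spectrum one only gets $\mathcal L\ge 0$. The point is that for $v\in N^\perp$ one can still write $v = v_{\ker} + v_P$ with $v_{\ker}\in\ker\mathcal L$, $v_P$ in the positive subspace (the cross terms vanish because $\ker\mathcal L\perp N$ and $\ker\mathcal L\perp P$ for the quadratic form), so $(\mathcal L v,v)_{L^2}=(\mathcal L v_P,v_P)_{L^2}\ge 0$. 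The mild subtlety is that $\ker\mathcal L$ is not $L^2$ (the $\Psi_k$ only decay like $\langle x\rangle^{-3}$, barely failing $L^2$ in dimension $5$), so the orthogonal decomposition must be carried out in $\dot H^1$ with the appropriate care, or one argues by density from Schwartz functions; this is handled in \cite{DKMASNSP} and I would cite it, sketching the form-domain argument.

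\textbf{Part (iii) — the main obstacle.} The identity $\int f''(q)\Psi_k\Psi_{k'}\Psi_{k''}\,dx=0$ is the genuinely new ingredient needed for the modulation analysis. The idea is to differentiate, three times with respect to the $21$ parameters $\mathcal A=(\lambda,\xi,a,c)$, the identity $\Delta\theta_{\mathcal A}(q)+f(\theta_{\mathcal A}(q))=0$ (valid because $\theta_{\mathcal A}\in M$ preserves $\Sigma$). Writing $q_{\mathcal A}:=\theta_{\mathcal A}(q)$ and $\partial_i:=\partial_{\mathcal A_i}$, the first derivative gives $\mathcal L\,\partial_i q_{\mathcal A}=0$ at $\mathcal A=0$, so $\partial_i q_{\mathcal A}|_0$ spans $\widetilde{\mathcal Z}_q$; the second derivative gives $\mathcal L\,\partial_i\partial_j q_{\mathcal A}=f''(q)\,\partial_i q\,\partial_j q$ at $\mathcal A=0$; pairing with $\partial_k q_{\mathcal A}|_0\in\ker\mathcal L$ and using self-adjointness of $\mathcal L$ (legitimate since $\partial_i\partial_j q_{\mathcal A}$ and $\partial_k q$ have enough decay/integrability for the integration by parts — this is where the decaying-state hypothesis is used again) yields $\int f''(q)\,\partial_iq\,\partial_jq\,\partial_kq\,dx=0$. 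Since $\{\partial_i q_{\mathcal A}|_0\}$ spans $\widetilde{\mathcal Z}_q=\SPAN(\Psi_1,\dots,\Psi_K)$, expanding each $\Psi_k$ in the $\partial_i q$ basis gives the claimed vanishing for all $k,k',k''$. The delicate points are: (a) justifying that $\mathcal A\mapsto\theta_{\mathcal A}(q)$ is $C^2$ into $\dot H^1$ (or a suitable weighted space) near $\mathcal A=0$, which uses the explicit formula for $\theta_{\mathcal A}$ and the decay of $q$; and (b) justifying the integration by parts $(\mathcal L u,v)_{L^2}=(u,\mathcal L v)_{L^2}$ when neither $u=\partial_i\partial_j q$ nor $v=\partial_k q$ is in $L^2$ — one checks that the product $u\,\mathcal L v$, $v\,\mathcal L u$ and the boundary terms $\nabla u\cdot v$, $u\,\nabla v$ all decay fast enough (using $|\partial^\alpha q|\lesssim\langle x\rangle^{-4-|\alpha|}$ against the polynomial weights) for Green's identity to hold with no boundary contribution. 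I expect (b), the integrability bookkeeping in part (iii), to be the technical crux of the proof.
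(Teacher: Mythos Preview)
Your proposal is correct and follows essentially the same route as the paper: parts (i)--(ii) are largely deferred to \cite{DKMASNSP} and standard elliptic arguments, and part (iii) is proved by differentiating the group action (the paper applies $\theta_{\mathcal A}$ to the kernel equation $\mathcal L\Psi_{k'}=0$ and differentiates once, which is equivalent to your second-derivative computation on $\Delta q_{\mathcal A}+f(q_{\mathcal A})=0$) to exhibit an explicit $\tilde\Psi_{k'}$ with $\mathcal L\tilde\Psi_{k'}=f''(q)\Psi_{k'}\Psi_{k''}$, then pairing with $\Psi_k\in\ker\mathcal L$ and integrating by parts. One small slip: you say ``differentiate three times,'' but your own argument (correctly) uses only two derivatives of $q_{\mathcal A}$, the third kernel factor entering via the final pairing.
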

\begin{proof} Proof of (i).
	For the spectral properties of $\mathcal{L}$, see the proof of \cite[Claim 3.5]{DKMASNSP}.
	The algebraic decay of kernel functions $\Psi_{k}$ directly follows from the non-degenerate decaying condition.
	The exponential decay of negative functions $Y_{j}$ follows from standard elliptic arguments. See \emph{e.g.}~\cite{Mes} and~\cite[Proposition 3.9]{DKMASNSP}.
	
	Proof of (ii). The proof relies on a standard argument based on the min-max principle (see \emph{e.g.}~\cite[Proposition 3.6]{DKMASNSP}) and we omit it.
	
	Proof of (iii). Without loss of generality, we consider 
	\begin{equation*}
	\quad \Psi_{k''}=-3x_{i}q+|x|^{2}\partial_{x_{i}}q-2x_{i}x\cdot\nabla q.
	\end{equation*}
	For all $k'=1,\ldots,K$, we have
	\begin{equation*}
	-\Delta\Psi_{k'}-f'(q)\Psi_{k'}=0.
	\end{equation*}
	We consider the transformation $T_{a}=\theta_{\mathcal{A}}$ with $\theta_{\mathcal{A}}=(1,0,a,0)$ for the above identity,
    \begin{equation*}
    -\Delta T_{a}\Psi_{k'}-f'(T_{a}q)T_{a}\Psi_{k'}=0.
    \end{equation*}
    Taking the derivative of above identity with respect to $a_{i}$, and then letting $a=0$,
    we obtain
    \begin{equation*}
    f''(q)\Psi_{k'}\Psi_{k''}=-\Delta \tilde{\Psi}_{k'}-f'(q)\tilde{\Psi}_{k'}=\mathcal{L} \tilde{\Psi}_{k'},
    \end{equation*}
    where
    \begin{equation*}
    \tilde{\Psi}_{k'}=-3x_{i}\Psi_{k'}+|x|^{2}\partial_{x_{i}}\Psi_{k'}-2x_{i}(x\cdot\nabla \Psi_{k'}).
    \end{equation*}
    Therefore, by integration by parts, for any $k=1,\ldots,K$,
    \begin{equation*}
     \int_{\RR}\Psi_{k}\left(f''(q)\Psi_{k'}\Psi_{k''}\right)\d x
     =\int_{\RR}\Psi_{k} \left(\mathcal{L}\tilde{\Psi}_{k'}\right)\d x 
     =\int_{\RR}\left(\mathcal{L}\Psi_{k}\right) \tilde{\Psi}_{k'}\d x=0.
    \end{equation*}
    Using the non-degenerate condition and proceeding similarly for all the parameters in the transformation $\theta_{\mathcal{A}}$, we complete the proof of (iii).
    \end{proof}
\subsection{Coercivity of $\mathcal{H}_{\ell}$}\label{SS:Coer}
For $-1<\ell<1$, let
\begin{equation*}
\vec{\Psi}_{k,\ell}=\left(
\begin{array}{c}\Psi_{k}(x_{\ell})\\ -\ell\partial_{x_{1}}\Psi_{k}(x_{\ell}),
\end{array}\right)\quad \mbox{for}\ k=1,\ldots,K,
\end{equation*}
and for $j=1,\ldots,J$
\begin{equation*}
Y_{j,\ell}=Y_{j}(x_{\ell}),\quad \vec{Y}_{j,\ell}^{0}=\left(\begin{array}{c} Y_{j,\ell} \\ 0 \end{array}\right),\quad 
\vec{Y}_{j,\ell}^{\pm}=\left(\begin{array}{c}
Y_{j,\ell}e^{\mp\frac{\ell\lambda_{j}}{\sqrt{1-\ell^{2}}}x_{1}}\\
-(\ell\partial_{x_{1}}Y_{j,\ell}\mp \frac{\lambda_{j}}{\sqrt{1-\ell^{2}}}Y_{j,\ell})e^{\mp\frac{\ell\lambda_{j}}{\sqrt{1-\ell^{2}}}x_{1}}
\end{array}
\right).
\end{equation*}
Following \cite{CM_erratum}, we define the function $\vec{W}_{j,\ell}$ by
\begin{equation*}
\vec{W}_{j,\ell}=\vec{Y}^{+}_{j,\ell}+\vec{Y}_{j,\ell}^{-},\quad \mbox{for}\ j=1,\ldots,J.
\end{equation*}

First, we prove the following technical identities of $\vec{Y}^{\pm}_{j,\ell}$ and $\vec{W}_{j,\ell}$.
\begin{lemma} 
	The functions $\vec{Y}^{\pm}_{j,\ell}$ and $\vec{W}_{j,\ell}$ satisfy the following properties.
    \begin{enumerate}
    	\item For $j=1,\ldots,J$, 	\begin{equation}\label{HY}
    	\mathcal{H}_{\ell}\vec{Y}_{j,\ell}^{\pm}=
    	\mp {\lambda_{j}}(1-\ell^{2})^{\frac{1}{2}}{\rm{J}}\vec{Y}_{j,\ell}^{\pm}.
    	\end{equation}
	\item For $j=1,\ldots,J$, 
	\begin{equation}\label{HWW}
	\left(\mathcal{H}_{\ell}\vec{W}_{j,\ell},\vec{W}_{j,\ell}\right)_{L^{2}}=-{4\lambda_{j}^{2}}(1-\ell^{2})^{\frac{1}{2}}.
	\end{equation}
	\item For $j,j'=1,\ldots,J$ with $j\ne j'$,
	\begin{equation}\label{HWw}
	\left(\mathcal{H}_{\ell}\vec{W}_{j,\ell},\vec{W}_{j',\ell}\right)_{L^{2}}=0.
	\end{equation}
\end{enumerate}
\end{lemma}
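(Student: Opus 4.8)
The plan is to verify the three identities by direct computation, starting from the spectral relation $\mathcal{L}_{\ell}Y_{j,\ell} = -\lambda_j^2(1-\ell^2)\,\partial_{x_1}^2 Y_{j,\ell}/\partial_{x_1}^2\ldots$ — more precisely, from the fact that $\mathcal{L}Y_j = -\lambda_j^2 Y_j$ transforms under the change of variables $x \mapsto x_\ell$ into an identity for $Y_{j,\ell} = Y_j(x_\ell)$. First I would record that $Y_{j,\ell}$ solves $-(1-\ell^2)\partial_{x_1}^2 Y_{j,\ell} - \bar\Delta Y_{j,\ell} - f'(q_\ell)Y_{j,\ell} = -\lambda_j^2 Y_{j,\ell}$, which is the $\ell$-adapted analogue of the eigenvalue equation (exactly as in~\eqref{equ:Ql} for $q_\ell$ itself). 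Then for~(1) I would plug $\vec{Y}^{\pm}_{j,\ell}$ into $\mathcal{H}_\ell$ and compute the two components. Writing $\mu = \mp\ell\lambda_j/\sqrt{1-\ell^2}$ and $\nu = \mp\lambda_j/\sqrt{1-\ell^2}$, the exponential weight $e^{\mu x_1}$ interacts with $-\Delta$ and with $-\ell\partial_{x_1}$; the point of the specific choice of second component is precisely that the cross terms from $\Delta(e^{\mu x_1}Y_{j,\ell})$ combine with $\mu^2$, $\ell\mu$, etc., to collapse onto $\mp\lambda_j\sqrt{1-\ell^2}\,{\rm J}\vec{Y}^{\pm}_{j,\ell}$. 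This is a finite, if slightly tedious, algebraic check: expand $-\Delta(e^{\mu x_1}Y_{j,\ell}) = e^{\mu x_1}(-\Delta Y_{j,\ell} - 2\mu\partial_{x_1}Y_{j,\ell} - \mu^2 Y_{j,\ell})$, substitute the eigenvalue equation for $-\bar\Delta Y_{j,\ell}$, and track cancellations; the identity $\mu^2 = \ell^2\lambda_j^2/(1-\ell^2)$ together with $-(1-\ell^2)\mu^2 + \lambda_j^2 = \lambda_j^2(1-\ell^2) \cdot$ (appropriate factor) is what makes everything close.

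Once~\eqref{HY} is in hand, parts~(2) and~(3) are essentially formal. For~(2), expand $(\mathcal{H}_\ell \vec{W}_{j,\ell}, \vec{W}_{j,\ell})_{L^2} = (\mathcal{H}_\ell(\vec{Y}^+_{j,\ell} + \vec{Y}^-_{j,\ell}), \vec{Y}^+_{j,\ell} + \vec{Y}^-_{j,\ell})_{L^2}$ and use~\eqref{HY} to replace each $\mathcal{H}_\ell \vec{Y}^\pm_{j,\ell}$ by $\mp\lambda_j\sqrt{1-\ell^2}\,{\rm J}\vec{Y}^\pm_{j,\ell}$. The four resulting pairings split into diagonal terms $\pm\lambda_j\sqrt{1-\ell^2}({\rm J}\vec{Y}^\pm_{j,\ell}, \vec{Y}^\pm_{j,\ell})_{L^2}$, which vanish since ${\rm J}$ is antisymmetric, and cross terms $\mp\lambda_j\sqrt{1-\ell^2}({\rm J}\vec{Y}^\pm_{j,\ell}, \vec{Y}^\mp_{j,\ell})_{L^2}$. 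So I would compute $({\rm J}\vec{Y}^+_{j,\ell}, \vec{Y}^-_{j,\ell})_{L^2}$ explicitly: ${\rm J}\vec{Y}^+_{j,\ell}$ has components (second component of $\vec{Y}^+$, minus first component of $\vec{Y}^+$), and pairing against $\vec{Y}^-_{j,\ell}$ the exponential weights $e^{-\ell\lambda_j x_1/\sqrt{1-\ell^2}}$ and $e^{+\ell\lambda_j x_1/\sqrt{1-\ell^2}}$ multiply to $1$, leaving integrals of the form $\int Y_{j,\ell}(\ell\partial_{x_1}Y_{j,\ell}) \pm \frac{\lambda_j}{\sqrt{1-\ell^2}}\int Y_{j,\ell}^2$; the first integral vanishes by antisymmetry ($\int Y\partial_{x_1}Y = 0$), and $\int Y_{j,\ell}^2 = \sqrt{1-\ell^2}\int Y_j^2 = \sqrt{1-\ell^2}$ by the change of variables and the normalization $(Y_j, Y_j)_{L^2} = 1$. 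Collecting the constants gives $-4\lambda_j^2\sqrt{1-\ell^2}$.

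For~(3), the same expansion via~\eqref{HY} reduces $(\mathcal{H}_\ell\vec{W}_{j,\ell}, \vec{W}_{j',\ell})_{L^2}$ to a combination of pairings $({\rm J}\vec{Y}^\pm_{j,\ell}, \vec{Y}^{\pm'}_{j',\ell})_{L^2}$. The key observation is that whenever the two exponential weights do not cancel — i.e. for the $(+,+)$ and $(-,-)$ combinations — the resulting integrand is $e^{\mp 2\ell\lambda_j x_1/\sqrt{1-\ell^2}}$ or similar times a decaying function, and these contribute; but because $\mathcal{H}_\ell$ is symmetric with respect to the natural bilinear form and using~\eqref{HY} from both sides, one gets $(\mathcal{H}_\ell\vec{Y}^\pm_{j,\ell}, \vec{Y}^{\pm'}_{j',\ell})_{L^2}$ is proportional to both $\lambda_j$ and $\lambda_{j'}$ with compatible signs only when these match; the mismatched signs force the pairing to vanish. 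Concretely, I would argue: $\mp\lambda_j\sqrt{1-\ell^2}({\rm J}\vec{Y}^\pm_{j,\ell}, \vec{Y}^{s'}_{j',\ell})_{L^2} = (\mathcal{H}_\ell\vec{Y}^\pm_{j,\ell}, \vec{Y}^{s'}_{j',\ell})_{L^2} = (\vec{Y}^\pm_{j,\ell}, \mathcal{H}_\ell\vec{Y}^{s'}_{j',\ell})_{L^2} = -s'\lambda_{j'}\sqrt{1-\ell^2}(\vec{Y}^\pm_{j,\ell}, {\rm J}\vec{Y}^{s'}_{j',\ell})_{L^2} = s'\lambda_{j'}\sqrt{1-\ell^2}({\rm J}\vec{Y}^\pm_{j,\ell}, \vec{Y}^{s'}_{j',\ell})_{L^2}$, using antisymmetry of ${\rm J}$ and symmetry of $\mathcal{H}_\ell$ in the last step; hence $(\mp\lambda_j - s'\lambda_{j'})({\rm J}\vec{Y}^\pm_{j,\ell}, \vec{Y}^{s'}_{j',\ell})_{L^2} = 0$, and since $\lambda_j \neq \lambda_{j'}$ when $j \neq j'$ (eigenvalues distinct — or, if not distinct, one handles it by the explicit orthogonality $\int Y_j Y_{j'} = 0$ and the weight cancellation), each such pairing vanishes, giving~\eqref{HWw}.

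The main obstacle will be part~(1): one must be scrupulous about the chain rule under the anisotropic scaling $x \mapsto x_\ell$ and about which derivatives hit the exponential weight versus $Y_{j,\ell}$, and keep the bookkeeping of the factors $\sqrt{1-\ell^2}$, $\ell$, $\lambda_j$ straight so that the off-diagonal terms in $\mathcal{H}_\ell$ really do assemble into $\mp\lambda_j\sqrt{1-\ell^2}\,{\rm J}$. A secondary subtlety is the symmetry of $\mathcal{H}_\ell$ used in~(3): $\mathcal{H}_\ell$ is not self-adjoint on $L^2\times L^2$, but $(\mathcal{H}_\ell \vec{u}, \vec{v})_{L^2} = (\vec{u}, \mathcal{H}_\ell^{*}\vec{v})_{L^2}$ where the off-diagonal $\pm\ell\partial_{x_1}$ entries are skew-adjoint, so in fact $\mathcal{H}_\ell$ \emph{is} symmetric for the $L^2\times L^2$ pairing after integration by parts (the $-\ell\partial_{x_1}$ in position $(1,2)$ and $+\ell\partial_{x_1}$ in position $(2,1)$ are adjoint to each other up to sign, which is exactly the symmetry); I would state this explicitly as a preliminary remark before using it.
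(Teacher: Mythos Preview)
Your proposal is essentially the paper's proof, and the structure is sound: direct computation for~\eqref{HY}, then bilinear algebra using~\eqref{HY}, the symmetry of $\mathcal{H}_\ell$, and the antisymmetry of ${\rm J}$ for~\eqref{HWW} and~\eqref{HWw}. Two remarks are worth making.

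First, your treatment of~\eqref{HWW} is slightly cleaner than the paper's: you kill the diagonal terms $(\mathcal{H}_\ell\vec{Y}^{\pm}_{j,\ell},\vec{Y}^{\pm}_{j,\ell})_{L^2}$ directly via $({\rm J}\vec v,\vec v)_{L^2}=0$, whereas the paper establishes the more general fact $(\mathcal{H}_\ell\vec{Y}^{\pm}_{j,\ell},\vec{Y}^{\pm}_{j',\ell})_{L^2}=0$ for \emph{all} $j,j'$ using the symmetry argument (this is~\eqref{HYY}). The paper's version is needed anyway to reduce the four-term expansion in~\eqref{HWw} to only the two mixed terms, so you will end up proving it.

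Second, and more importantly, your handling of the degenerate case $\lambda_j=\lambda_{j'}$ in~\eqref{HWw} is incomplete. Your symmetry argument correctly forces $({\rm J}\vec{Y}^{\pm}_{j,\ell},\vec{Y}^{\pm}_{j',\ell})_{L^2}=0$ and, when $\lambda_j\ne\lambda_{j'}$, also the mixed pairings. But when $\lambda_j=\lambda_{j'}$ with $j\ne j'$, the individual cross terms $(\mathcal{H}_\ell\vec{Y}^{+}_{j,\ell},\vec{Y}^{-}_{j',\ell})_{L^2}$ need not vanish separately; your phrase ``each such pairing vanishes'' is not right here. What the paper does is compute each of these explicitly: the exponential weights cancel, the terms containing $(Y_j,Y_{j'})_{L^2}$ drop by orthogonality, and one is left with $-2\ell\lambda_j(1-\ell^2)^{1/2}(\partial_{x_1}Y_j,Y_{j'})_{L^2}$. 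The two cross terms then sum to a multiple of $(\partial_{x_1}Y_j,Y_{j'})_{L^2}+(\partial_{x_1}Y_{j'},Y_j)_{L^2}$, which vanishes by integration by parts. So the correct statement is that the \emph{sum} of the cross terms vanishes, not each individually; your sketch (``explicit orthogonality and weight cancellation'') points at the right ingredients but should be made precise in this way.
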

\begin{proof} 
	Proof of (i). On the one hand, from (i) of Lemma~\ref{le:Q} and direct computation,
	\begin{equation*}
	\begin{aligned}
	&\left(-\Delta-f'(q_{\ell})\right)Y_{j,\ell}e^{\mp\frac{\ell\lambda_{j}}{\sqrt{1-\ell^{2}}}x_{1}}
	+\ell\partial_{x_{1}}\left(\left(\ell\partial_{x_{1}}Y_{j,\ell}\mp \frac{\lambda_{j}}{\sqrt{1-\ell^{2}}}Y_{j,\ell}\right)
	e^{\mp\frac{\ell\lambda_{j}}{\sqrt{1-\ell^{2}}}x_{1}}\right)\\
	&=\left(\mathcal{L}_{\ell}Y_{j,\ell}\right)e^{\mp\frac{\ell\lambda_{j}}{\sqrt{1-\ell^{2}}}x_{1}}
	\pm \ell(1-\ell^{2})^{\frac{1}{2}}\lambda_{j}(\partial_{x_{1}}Y_{j,\ell})
	e^{\mp\frac{\ell\lambda_{j}}{\sqrt{1-\ell^{2}}}x_{1}}\\
	&=\pm \lambda_{j}(1-\ell^{2})^{\frac{1}{2}}\left(\ell\partial_{x_{1}}Y_{j,\ell}\mp\frac{\lambda_{j}}
	{\sqrt{1-\ell^{2}}}Y_{j,\ell}\right)e^{\mp\frac{\ell\lambda_{j}}{\sqrt{1-\ell^{2}}}x_{1}}.
	\end{aligned}
	\end{equation*}
	On the other hand, by direct computation,
	\begin{equation*}
	\begin{aligned}
	&\ell\partial_{x_{1}}\left(Y_{j,\ell}e^{\mp\frac{\ell\lambda_{j}}{\sqrt{1-\ell^{2}}}x_{1}}\right)
	-\left(\ell \partial_{x_{1}}Y_{j,\ell}\mp \frac{\lambda_{j}}{\sqrt{1-\ell^{2}}}Y_{j,\ell}\right)e^{\mp\frac{\ell\lambda_{j}}{\sqrt{1-\ell^{2}}}x_{1}}\\
	&=\left(\pm \frac{\lambda_{j}}{\sqrt{1-\ell^{2}}}\mp\frac{\ell^{2}\lambda_{j}}{\sqrt{1-\ell^{2}}}\right)Y_{j,\ell}
	e^{\mp\frac{\ell\lambda_{j}}{\sqrt{1-\ell^{2}}}x_{1}}=\pm\lambda_{j}(1-\ell^{2})^{\frac{1}{2}}Y_{j,\ell}
	e^{\mp\frac{\ell\lambda_{j}}{\sqrt{1-\ell^{2}}}x_{1}}.
	\end{aligned}
	\end{equation*}
	Gathering above identities, we obtain~\eqref{HY}.
	
	Proof of (ii). To prove~\eqref{HWW}, we first show that for $j,j'=1,\ldots,J$,
	\begin{equation}\label{HYY}
	\left(\mathcal{H}_{\ell}\vec{Y}_{j,\ell}^{+},\vec{Y}_{j',\ell}^{+}\right)_{L^{2}}=
	\left(\mathcal{H}_{\ell}\vec{Y}_{j,\ell}^{-},\vec{Y}_{j',\ell}^{-}\right)_{L^{2}}=0.
	\end{equation}
	On the one hand, by~\eqref{HY}, for $j,j'=1,\ldots,J$,
	\begin{equation*}
	\left(\mathcal{H}_{\ell}\vec{Y}_{j,\ell}^{+},\vec{Y}_{j',\ell}^{+}\right)_{L^{2}}
	=-\lambda_{j}(1-\ell^{2})^{\frac{1}{2}}\left({\rm{J}}\vec{Y}_{j,\ell}^{+},\vec{Y}_{j',\ell}^{+}\right)_{L^{2}}.
	\end{equation*}
	On the other hand, using again~\eqref{HY}, for $j,j'=1,\ldots,J$,
	\begin{equation*}
	\begin{aligned}
	\left(\mathcal{H}_{\ell}\vec{Y}_{j,\ell}^{+},\vec{Y}_{j',\ell}^{+}\right)_{L^{2}}
	=\left(\vec{Y}_{j,\ell}^{+},\mathcal{H}_{\ell}\vec{Y}_{j',\ell}^{+}\right)_{L^{2}}
=\lambda_{j'}(1-\ell^{2})^{\frac{1}{2}}\left({\rm{J}}\vec{Y}_{j,\ell}^{+},\vec{Y}_{j',\ell}^{+}\right)_{L^{2}}.
	\end{aligned}
	\end{equation*}
	Since $\lambda_{j},\lambda_{j'}>0$, this implies $	\left(\mathcal{H}_{\ell}\vec{Y}_{j,\ell}^{+},\vec{Y}_{j',\ell}^{+}\right)_{L^{2}}=0$. The proof of~\eqref{HYY} for $\left(\mathcal{H}_{\ell}\vec{Y}_{j,\ell}^{-},\vec{Y}_{j',\ell}^{-}\right)_{L^{2}}=0$ follows from similar arguments and it is omitted.
	
	Now, we start to prove~\eqref{HWW}. From~\eqref{HYY}, we have
	\begin{equation*}
	\left(\mathcal{H}_{\ell}\vec{W}_{j,\ell},\vec{W}_{j,\ell}\right)_{L^{2}}=
	2\left(\mathcal{H}_{\ell}\vec{Y}^{+}_{j,\ell},\vec{Y}^{-}_{j,\ell}\right)_{L^{2}}.
	\end{equation*}
	Using~\eqref{HY} and the explicit expression of $\vec{Y}_{j,\ell}^{\pm}$, we compute
	\begin{equation*}
	\left(\mathcal{H}_{\ell}\vec{Y}^{+}_{j,\ell},\vec{Y}^{-}_{j,\ell}\right)_{L^{2}}
	=-\lambda_{j}(1-\ell^{2})^{\frac{1}{2}}\left({\rm{J}}\vec{Y}_{j,\ell}^{+},\vec{Y}_{j,\ell}^{-}\right)_{L^{2}}
	=-2\lambda_{j}^{2}(1-\ell^{2})^{\frac{1}{2}}\left(Y_{j},Y_{j}\right)_{L^{2}}.
	\end{equation*}
	Therefore, the identity~\eqref{HWW} follows from the normalization $(Y_{j},Y_{j})_{L^{2}}=1$.
	
	Proof of (iii). To prove~\eqref{HWw}, we first show that for $j,j'=1,\ldots,J$ with $j\ne j'$,
	\begin{equation}\label{HYy1}
	\left(\mathcal{H}_{\ell}\vec{Y}_{j,\ell}^{+},\vec{Y}_{j',\ell}^{-}\right)_{L^{2}}=0\quad \mbox{when}\ \ \lambda_{j}\ne \lambda_{j'},
	\end{equation}
	and
	\begin{equation}\label{HYy2}
	\left(\mathcal{H}_{\ell}\vec{Y}_{j,\ell}^{+},\vec{Y}_{j',\ell}^{-}\right)_{L^{2}}+
	\left(\mathcal{H}_{\ell}\vec{Y}_{j',\ell}^{+},\vec{Y}_{j,\ell}^{-}\right)_{L^{2}}
	=0\quad \mbox{when}\ \ \lambda_{j}=\lambda_{j'}.
	\end{equation}
	Let $j\ne j'$. In the case where $\lambda_{j}\ne \lambda_{j'}$. On the one hand, by~\eqref{HY}, for
	$j\ne j'$,
	\begin{equation*}
	\left(\mathcal{H}_{\ell}\vec{Y}_{j,\ell}^{+},\vec{Y}_{j',\ell}^{-}\right)_{L^{2}}
	=-\lambda_{j}(1-\ell^{2})^{\frac{1}{2}}\left({\rm{J}}\vec{Y}_{j,\ell}^{+},\vec{Y}_{j',\ell}^{-}\right)_{L^{2}}.
	\end{equation*}
	On the other hand, using again~\eqref{HY},
	\begin{equation*}
	\left(\mathcal{H}_{\ell}\vec{Y}_{j,\ell}^{+},\vec{Y}_{j',\ell}^{-}\right)_{L^{2}}
	=	\left(\vec{Y}_{j,\ell}^{+},\mathcal{H}_{\ell}\vec{Y}_{j',\ell}^{-}\right)_{L^{2}}
	=-\lambda_{j'}(1-\ell^{2})^{\frac{1}{2}}\left({\rm{J}}\vec{Y}_{j,\ell}^{+},\vec{Y}_{j',\ell}^{-}\right)_{L^{2}}.
	\end{equation*}
	Since $\lambda_{j}\ne \lambda_{j'}$, this implies~\eqref{HYy1}.
	
	In the case where $\lambda_{j}=\lambda_{j'}$. From~\eqref{HY} and the 
	explicit expression of $\vec{Y}_{j,\ell}^{\pm}$, $\left(Y_{j},Y_{j'}\right)_{L^{2}}=0$ and integration by parts, we compute
	\begin{equation*}
	\left(\mathcal{H}_{\ell}\vec{Y}_{j,\ell}^{+},\vec{Y}_{j',\ell}^{-}\right)_{L^{2}}
	=-\lambda_{j}(1-\ell^{2})^{\frac{1}{2}}\left({\rm{J}}\vec{Y}_{j,\ell}^{+},\vec{Y}_{j',\ell}^{-}\right)_{L^{2}}
	=-2\ell\lambda_{j}(1-\ell^{2})^{\frac{1}{2}}\left(\partial_{x_{1}}Y_{j},Y_{j'}\right)_{L^{2}}.
	\end{equation*}
	Therefore, by~\eqref{HYY} and integration by parts,
	\begin{equation*}
	\left(\mathcal{H}_{\ell}\vec{Y}_{j,\ell}^{+},\vec{Y}_{j',\ell}^{-}\right)_{L^{2}}
	+\left(\mathcal{H}_{\ell}\vec{Y}_{j',\ell}^{+},\vec{Y}_{j,\ell}^{-}\right)_{L^{2}}
	=0,
	\end{equation*}
	which proves~\eqref{HYy2}.
	
	Now, we start to prove~\eqref{HWw}. Let $j\ne j'$. From~\eqref{HYY},
	\begin{equation*}
	\left(\mathcal{H}_{\ell}\vec{W}_{j,\ell},\vec{W}_{j',\ell}\right)_{L^{2}}
	=\left(\mathcal{H}_{\ell}\vec{Y}_{j,\ell}^{+},\vec{Y}_{j',\ell}^{-}\right)_{L^{2}}
	+\left(\mathcal{H}_{\ell}\vec{Y}_{j',\ell}^{+},\vec{Y}_{j,\ell}^{-}\right)_{L^{2}}.
	\end{equation*}
	Thus, the identity~\eqref{HWw} directly follows from~\eqref{HYy1} and~\eqref{HYy2}.
\end{proof}

Second, we prove the following coercivity property.
\begin{proposition}\label{procoer}
	There exist $\mu>0$ such that, for all $\vec{v}\in \dot{H}^{1}\times L^{2}$
	\begin{equation}\label{coer}
	\left(\mathcal{H}_{\ell}\vec{v},\vec{v}\right)_{L^{2}}
	\ge \mu \|\vec{v}\|^{2}_{\E}-
	\mu^{-1}\left(\sum_{k=1}^{K}\left(\vec{v},\vec{\Psi}_{k,\ell}\right)^{2}_{\E}
	+\sum_{\pm,j=1}^{J}\left(\mathcal{H}_{\ell}\vec{v},\vec{Y}_{j,\ell}^{\pm}\right)_{L^{2}}^{2}\right).
	\end{equation}
\end{proposition}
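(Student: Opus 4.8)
\emph{Step 1: reduction to $\mathcal{L}$.} For $\vec v=(v_1,v_2)$ put $w:=v_2+\ell\partial_{x_1}v_1$. Integrating the cross terms of $(\mathcal{H}_\ell\vec v,\vec v)_{L^2}$ by parts and completing the square in $v_2$ gives the pointwise identity
\begin{equation*}
(\mathcal{H}_\ell\vec v,\vec v)_{L^2}=(\mathcal{L}_\ell v_1,v_1)_{L^2}+\|w\|_{L^2}^2 ,
\end{equation*}
with $\mathcal{L}_\ell$ as in~\eqref{defL}. Writing $v_1(x)=g(x_\ell)$ and using $\mathcal{L}_\ell\big(g(\cdot_\ell)\big)=(\mathcal{L}g)(\cdot_\ell)$ together with the change of variables $x\mapsto x_\ell$, the first term becomes $(1-\ell^2)^{1/2}(\mathcal{L}g,g)_{L^2}$, and $\|g\|_{\dot{H}^{1}}\sim\|v_1\|_{\dot{H}^{1}}$, $\|v_1\|_{\dot{H}^{1}}^2+\|w\|_{L^2}^2\sim\|\vec v\|_{\E}^2$ (constants depending on $\ell$). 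Hence, in the linear coordinates $(v_1,w)$, the form $\vec v\mapsto(\mathcal{H}_\ell\vec v,\vec v)_{L^2}$ is an equivalent Hilbert norm squared minus $(1-\ell^2)^{1/2}(f'(q)g,g)_{L^2}$, which is compact relative to $\|g\|_{\dot{H}^{1}}$ since $q$ is a decaying state (Rellich, after truncating $f'(q)$). Two consequences: combined with Lemma~\ref{le:Q}(ii), which gives $(\mathcal{L}g,g)_{L^2}\ge0$ on the codimension-$J$ subspace $\{(g,Y_j)_{L^2}=0\ \forall j\}$, and with~\eqref{HWW}--\eqref{HWw}, which exhibit $E:=\SPAN\{\vec W_{j,\ell}\}_{j=1}^{J}$ as a $J$-dimensional subspace on which $(\mathcal{H}_\ell\cdot,\cdot)_{L^2}$ is negative definite, the Morse index of $\mathcal{H}_\ell$ equals $J$ and $E$ is a maximal negative subspace; moreover $\mathcal{H}_\ell\vec\Psi_{k,\ell}=0$ by direct computation, and using Lemma~\ref{le:Q}(i) one checks $\mathrm{Ker}\,\mathcal{H}_\ell=\SPAN\{\vec\Psi_{k,\ell}\}_{k=1}^{K}$.

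\emph{Step 2: rigid coercivity.} I will show there is $c>0$ with $(\mathcal{H}_\ell\vec v,\vec v)_{L^2}\ge c\,\mathrm{dist}_{\E}(\vec v,\mathrm{Ker}\,\mathcal{H}_\ell)^2$ for all $\vec v$ in the $\mathcal{H}_\ell$-orthogonal complement $E^{\perp}:=\{\vec v:(\mathcal{H}_\ell\vec v,\vec W_{j,\ell})_{L^2}=0,\ j=1,\dots,J\}$. Since $E$ is a maximal negative subspace, $(\mathcal{H}_\ell\cdot,\cdot)_{L^2}\ge0$ on $E^{\perp}$, and (using $\dot{H}^{1}\times L^2=E\oplus E^{\perp}$, itself a consequence of~\eqref{HWW}--\eqref{HWw}) the radical of the form on $E^{\perp}$ is exactly $\mathrm{Ker}\,\mathcal{H}_\ell$; the ``norm${}^2$ minus compact'' structure from Step 1 then upgrades this nonnegativity to the stated coercivity by a soft weak-compactness argument (a would-be minimizing sequence of unit vectors in $E^{\perp}$ converges strongly, with limit in $\mathrm{Ker}\,\mathcal{H}_\ell$).

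\emph{Step 3: conclusion.} Given $\vec v$, use $\dot{H}^{1}\times L^2=E\oplus E^{\perp}$ to write $\vec v=\sum_j d_j\vec W_{j,\ell}+\vec v'$ with $\vec v'\in E^{\perp}$ and $d_j=-\big(4\lambda_j^2(1-\ell^2)^{1/2}\big)^{-1}(\mathcal{H}_\ell\vec v,\vec W_{j,\ell})_{L^2}$; since $\SPAN\{\vec\Psi_{k,\ell}\}\subset E^{\perp}$, split further $\vec v'=\vec v^\flat+\sum_k b_k\vec\Psi_{k,\ell}$ with $\vec v^\flat$ $\E$-orthogonal to all $\vec\Psi_{k,\ell}$ (possible because the Gram matrix $[(\vec\Psi_{k,\ell},\vec\Psi_{k',\ell})_{\E}]$ is invertible). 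Then $\vec v^\flat\in E^{\perp}$, $\mathrm{dist}_{\E}(\vec v^\flat,\mathrm{Ker}\,\mathcal{H}_\ell)=\|\vec v^\flat\|_{\E}$, and using $\mathcal{H}_\ell\vec\Psi_{k,\ell}=0$, $(\mathcal{H}_\ell\vec v^\flat,\vec W_{j,\ell})_{L^2}=0$ and~\eqref{HWW}--\eqref{HWw},
\begin{equation*}
(\mathcal{H}_\ell\vec v,\vec v)_{L^2}=(\mathcal{H}_\ell\vec v^\flat,\vec v^\flat)_{L^2}-4(1-\ell^2)^{1/2}\sum_{j}\lambda_j^2 d_j^2\ \ge\ c\|\vec v^\flat\|_{\E}^2-4(1-\ell^2)^{1/2}\sum_{j}\lambda_j^2 d_j^2 .
\end{equation*}
Since $(\mathcal{H}_\ell\vec v,\vec W_{j,\ell})_{L^2}=(\mathcal{H}_\ell\vec v,\vec Y_{j,\ell}^{+})_{L^2}+(\mathcal{H}_\ell\vec v,\vec Y_{j,\ell}^{-})_{L^2}$, the $|d_j|$ — and then the $|b_k|$, which solve a linear system in the $(\vec v,\vec\Psi_{k',\ell})_{\E}$ and the $d_{j'}$ — are all $\lesssim\Sigma:=\sum_k|(\vec v,\vec\Psi_{k,\ell})_{\E}|+\sum_{\pm,j}|(\mathcal{H}_\ell\vec v,\vec Y_{j,\ell}^{\pm})_{L^2}|$, whence $\|\vec v^\flat\|_{\E}^2\ge\tfrac12\|\vec v\|_{\E}^2-C\Sigma^2$; inserting this above and bounding $\Sigma^2$ by Cauchy--Schwarz yields~\eqref{coer} for $\mu$ small.

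The main obstacle is the claim in Step 2 that $E$ is a \emph{maximal} negative subspace (and that the radical of $(\mathcal{H}_\ell\cdot,\cdot)_{L^2}$ on $E^{\perp}$ is no larger than $\mathrm{Ker}\,\mathcal{H}_\ell$): this is precisely where the nonnegativity Lemma~\ref{le:Q}(ii) — the only genuinely nontrivial spectral input — enters, via the Lorentz reduction of Step 1. The algebraic identities~\eqref{HY},~\eqref{HWW},~\eqref{HWw} play a bookkeeping role only: they guarantee the relevant Gram matrices are nondegenerate, and through $\vec W_{j,\ell}=\vec Y_{j,\ell}^{+}+\vec Y_{j,\ell}^{-}$ they tie the functionals appearing in~\eqref{coer} to the negative subspace $E$. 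The remaining points — the completion of the square, the change of variables, the weak-compactness upgrade, and the finite-rank perturbation — are routine.
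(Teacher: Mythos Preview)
Your proposal is correct and follows essentially the same route as the paper's proof. Both arguments rest on the completion-of-the-square identity $(\mathcal{H}_\ell\vec v,\vec v)_{L^2}=(\mathcal{L}_\ell v_1,v_1)_{L^2}+\|v_2+\ell\partial_{x_1}v_1\|_{L^2}^2$, use Lemma~\ref{le:Q}(ii) together with~\eqref{HWW}--\eqref{HWw} to identify $E=\SPAN\{\vec W_{j,\ell}\}$ as a maximal $J$-dimensional negative subspace (the paper does this via the explicit contradiction argument in its Step~1), compute $\mathrm{Ker}\,\mathcal{H}_\ell=\SPAN\{\vec\Psi_{k,\ell}\}$, and then upgrade positivity on $E^\perp$ modulo the kernel to quantitative coercivity by a Rellich/weak-compactness argument. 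The only cosmetic difference is that the paper invokes the passage from the reduced coercivity~\eqref{coer1} to~\eqref{coer} as ``a standard argument'', whereas you carry out that finite-rank decomposition explicitly in your Step~3.
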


\begin{proof}
	By a standard argument, it suffices to prove that there exists $\mu>0$ such that 
	for any $\vec{v}\in \dot{H}^{1}\times L^{2}$ with orthogonality conditions $\left(\mathcal{H}_{\ell}\vec{v},\vec{W}_{j,\ell}\right)_{L^{2}}=\left(\vec{v},\vec{\Psi}_{k,\ell}\right)_{\E}=0$ for all $j=1,\ldots,J$ and $k=1,\ldots,K$ there holds
	\begin{equation}\label{coer1}
	\left(\mathcal{H}_{\ell}\vec{v},\vec{v}\right)_{L^{2}}\ge \mu \|\vec{v}\|_{\E}^2.
	\end{equation}
	
	\textbf{Step 1.} Negative direction. Note that, for $\vec{v}=(v,z)\in \dot{H}^{1}\times L^{2}$,
	\begin{equation}\label{eq:Hv}
	\mathcal{H}_{\ell}\vec{v}=\left(\begin{array}{c}
	-\Delta v-f'(q_{\ell})v-\ell\partial_{x_{1}}z \\ \ell \partial_{x_{1}}v+z\end{array}\right),\ \ 
	\left(\mathcal{H}_{\ell}\vec{v},\vec{v}\right)_{L^{2}}=
	\left(\mathcal{L}_{\ell}v,v\right)_{L^{2}}+\|\ell\partial_{x_{1}}v+z\|_{L^{2}}^{2}.
	\end{equation}
	Therefore, from (ii) of Lemma~\ref{le:Q}, for any $\vec{v}\in \dot{H}^{1}\times L^{2}$,
	\begin{equation}\label{Hge0}
	\left(\vec{v},\vec{Y}_{j,\ell}^{0}\right)_{L^{2}}=0\ \mbox{for}\ j=1,\ldots,J
	\implies \left(\mathcal{H}_{\ell}\vec{v},\vec{v}\right)_{L^{2}}\ge 0.
	\end{equation}
	We claim, for any $\vec{v}\in \dot{H}^{1}\times L^{2}$,
	\begin{equation}\label{Hge01}
	\left(\mathcal{H}_{\ell}\vec{v},\vec{W}_{j,\ell}\right)_{L^{2}}=0\ \mbox{for}\ j=1,\ldots,J
	\implies \left(\mathcal{H}_{\ell}\vec{v},\vec{v}\right)_{L^{2}}\ge 0.
	\end{equation}
	For the sake of contradiction, assume that there exists a vector $\vec{v}\in \dot{H}^{1}\times L^{2}$ satisfying the orthogonality conditions in~\eqref{Hge01} and $\left(\mathcal{H}_{\ell}\vec{v},\vec{v}\right)<0$. For any $(c_{j})_{j=0,\ldots,J}\in \mathbb{R}^{J+1}/{\bf{0}}$, set 
	\begin{equation*}
	\vec{h}=c_{0}\vec{v}+\sum_{j=1}^{J}c_{j}\vec{W}_{j,\ell}\in {\rm{Span}}\left(\vec{v},\vec{W}_{1,\ell},\ldots,\vec{W}_{J,\ell}\right).
	\end{equation*}
	By direct computation,~\eqref{HWW},~\eqref{HWw} and the orthogonality conditions in~\eqref{Hge01},
	\begin{equation*}
	\begin{aligned}
	\left(\mathcal{H}_{\ell}\vec{h},\vec{h}\right)_{L^{2}}
	&=c_{0}^{2}\left(\mathcal{H}_{\ell}\vec{v},\vec{v}\right)_{L^{2}}+\sum_{j=1}^{J}c_{j}^{2}
	\left(\mathcal{H}_{\ell}\vec{W}_{j,\ell},\vec{W}_{j,\ell}\right)_{L^{2}}\\
	&=c_{0}^{2}\left(\mathcal{H}_{\ell}\vec{v},\vec{v}\right)_{L^{2}}-4(1-\ell^{2})^{\frac{1}{2}}
	\sum_{j=1}^{J}c_{j}^{2}\lambda_{j}^{2}<0.
	\end{aligned}
	\end{equation*}
	It follows that $\left(\mathcal H_{\ell}\cdot,\cdot\right)_{L^{2}}<0$ on ${\rm{Span}}\left(\vec{v},\vec{W}_{1,\ell},\ldots,\vec{W}_{J,\ell}\right)$ and
	\begin{equation*}
	{\rm{dim}}\SPAN\left(\vec{v},\vec{W}_{1,\ell},\ldots,\vec{W}_{J,\ell}\right)=J+1.
	\end{equation*}
	This is contradictory with~\eqref{Hge0} which says that 
	$\left(\mathcal{H}_{\ell}\cdot,\cdot\right)_{L^{2}}$ is nonnegative under only $J$ orthogonality conditions.
	The proof of~\eqref{Hge01} is complete.
	
	{\bf{Step 2}.} Null direction. Note that, from~\eqref{eq:Hv}, for any $\vec{v}=(v,z)\in {\rm{Ker}} \mathcal{H}_{\ell}$,
	\begin{equation*}
	-\Delta v-f'(q_{\ell})v-\ell \partial_{x_{1}}z=0,\quad \ell \partial_{x_{1}}v+z=0,
	\end{equation*}
	which is equivalent to
	\begin{equation*}
	-\Delta v+\ell^{2}\partial_{x_{1}}^{2}v-f'(q_{\ell})v=0,\quad z=-\ell \partial_{x_{1}}v.
	\end{equation*}
Thus,
	\begin{equation*}
	{\rm{Ker}} \mathcal{H}_{\ell}=\SPAN\left(\vec{\Psi}_{k,\ell},k=1,\ldots,K\right).
	\end{equation*}
	We claim, for all $\vec{v}=(v,z)\in \dot{H}^{1}\times L^{2}$ with orthogonality conditions
	$(\mathcal{H}_{\ell}\vec{v},\vec{W}_{j,\ell})_{L^{2}}=(\vec{v},\vec{\Psi}_{k,\ell})_{\E}=0$ for $j=1,\ldots,J$ and $k=1,\ldots,K$,
	\begin{equation}\label{H>0}
	\vec{v}=0\quad \mbox{or}\quad (\mathcal{H}_{\ell}\vec{v},\vec{v})>0.
	\end{equation}
	Denote 
	\begin{equation*}
	\mathcal{N}_{\ell}^{\perp}=\left\{\vec{v}=(v,z)\in \dot{H}^{1}\times L^{2}: \left(\mathcal{H}_{\ell}\vec{v},\vec{W}_{j,\ell}\right)_{L^{2}}=0 \ \ \mbox{for}\ j=1,\ldots,J\right\}.
	\end{equation*}
	Indeed, it suffices to prove that for any $\vec{v}\in \mathcal{N}_{\ell}^{\perp}$
	\begin{equation*}
	(\mathcal{H}_{\ell}{\vec{v}},\vec{v})_{L^{2}}=0\implies \vec{v}\in {\rm{Ker}}\mathcal{H}_{\ell}.
	\end{equation*}
	Fix $\vec{v}\in \mathcal{N}_{\ell}^{\perp}$ with $(\mathcal{H}_{\ell}\vec{v},\vec{v})_{L^{2}}=0$.
	From $(\mathcal{H}_{\ell}\cdot,\cdot)_{L^{2}}\ge 0$ on $\mathcal{N}_{\ell}^{\perp}$ and the Cauchy-Schwarz inequality for $(\mathcal{H}_{\ell}\cdot,\cdot)$,
	\begin{equation}\label{product=0}
	(\mathcal{H}_{\ell}\vec{v},\vec{f})=0\quad \mbox{for all}\ \vec{f}=(f,g)\in \mathcal{N}^{\perp}_{\ell}.
	\end{equation}
	For any $\vec{\tilde{v}}=(\tilde{v},\tilde{z})\in C_{0}^{\infty}(\RR)\times C_{0}^{\infty}(\RR)$, we decompose
	\begin{equation*}
	\vec{\tilde{v}}=\vec{f}+\sum_{j=1}^{J}\alpha_{j}\vec{W}_{j,\ell}\quad \mbox{where}\ \vec{f}\in \mathcal{N}_{\ell}^{\perp}\ \ \mbox{and}\ \ \alpha_{j}=-\frac{1}{4}\lambda_{j}^{-2}(1-\ell^{2})^{-\frac{1}{2}}\left(\mathcal{H}_{\ell}\vec{\tilde{v}},\vec{W}_{j,\ell}\right)_{L^{2}}.
	\end{equation*}
	Thus, from~\eqref{product=0},
	\begin{equation*}
	\left(\mathcal{H}_{\ell}\vec{v},\vec{\tilde{v}}\right)_{L^{2}}
	=\left(\mathcal{H}_{\ell}\vec{v},\vec{f}\right)_{L^{2}}+\sum_{j=1}^{J}\alpha_{j}\left(\mathcal{H}_{\ell}\vec{v},\vec{W}_{j,\ell}\right)_{L^{2}}=0.
	\end{equation*}
	It follows that $\mathcal{H}_{\ell}\vec{v}=0$ in the sense of distribution,~\emph{i.e.}~$\vec{v}\in {\rm{Ker}}\mathcal{H}_{\ell}$. The proof of~\eqref{H>0} is complete.
	
	{\textbf{Step 3.}} Conclusion. Now, we prove~\eqref{coer1} by contradiction and using standard compactness argument.
	For the sake of contradiction, assume that there exists a sequence 
	\begin{equation*}
	\left\{\vec{v}_{n}=(v_{n},z_{n})\right\}_{n\in \mathbb{N}}\in \dot{H}^{1}\times L^{2}
	\end{equation*}
	such that 
	\begin{equation}\label{orthon}
	\vec{v}_{n}\in \mathcal{N}_{\ell}^{\perp},\quad (\vec{v}_{n},\vec{\Psi}_{k,\ell})_{\E}=0\quad \mbox{for}\ k=1,\ldots,K,
	\end{equation}
	and
	\begin{equation}\label{est:Hvn}
	0<\left(\mathcal{H}_{\ell}\vec{v}_{n},\vec{v}_{n}\right)_{L^{2}}<\frac{1}{n}\|\vec{v}_{n}\|^{2}_{\E},\quad  
	\int_{\RR}f'(q_{\ell})v_{n}^{2}=1\quad \mbox{for any}\ n\in \mathbb{N}.
	\end{equation}
	From the above inequalities and~\eqref{eq:Hv},  the sequence $\left\{\vec{v}_{n}\right\}_{n\in \mathbb{N}}$ is bounded in $\dot{H}^{1}\times L^{2}$. Upon extracting a subsequence, we can assume 
	\begin{equation}\label{weakcon}
	\vec{v}_{n}=(v_{n},z_{n})\rightharpoonup \vec{v}=(v,z)\in \dot{H}^{1}\times L^{2}\quad \mbox{as}\ n\to \infty.
	\end{equation} 
	On the one hand, by the Rellich theorem, we have
	\begin{equation*}
	\int_{\RR}f'(q_{\ell})v^{2}\d x=\lim_{n\to \infty}\int_{\RR}f'(q_{\ell})v_{n}^{2}\d x=1,
	\end{equation*}
	which implies $\vec{v}\ne 0$.
	On the other hand, from $\vec{v}_{n}=(v_{n},z_{n})\rightharpoonup \vec{v}=(v,z)\in \dot{H}^{1}\times L^{2}$,
	\begin{equation*}
\begin{aligned}
   \int_{\RR}(\ell\partial_{x_{1}} v+z)^{2}\d x&\le \mathop{\underline{\lim}}_{n \to \infty} \int_{\RR}(\ell\partial_{x_{1}} v_{n}+z_{n})^{2}\d x,\\
	\int_{\RR}\left((1-\ell^{2})(\partial_{x_{1}}v)^{2}+|\overline \nabla v|^{2}\right)\d x&\le \mathop{\underline{\lim}}_{n \to \infty} \int_{\R^{5}}\left((1-\ell^{2})(\partial_{x_{1}}v_{n})^{2}+|\overline \nabla v_{n}|^{2}\right)\d x.
	\end{aligned}
	\end{equation*}
	Combining the above inequalities with~\eqref{eq:Hv} and~\eqref{est:Hvn},
	\begin{equation*}
	\begin{aligned}
	\left(\mathcal{H}_{\ell}\vec{v},\vec{v}\right)_{L^{2}}\le \mathop{\underline{\lim}}_{n \to \infty} \left(\mathcal{H}_{\ell}\vec{v}_{n},\vec{v}_{n}\right)_{L^{2}}\le \mathop{\underline{\lim}}_{n \to \infty} \frac{1}{n}\|\vec{v}_{n}\|^{2}_{\E}\le 0.
	\end{aligned}
	\end{equation*}
	Using again~\eqref{weakcon} and taking limit in~\eqref{orthon}, we have
	\begin{equation*}
	\vec{v}\in \mathcal{N}^{\perp},\quad  (\vec{v},\vec{\Psi}_{k,\ell})_{\E}=0\quad \mbox{for}\ k=1,\ldots,K,
	\end{equation*}
	Therefore, from~\eqref{H>0}, we obtain $\vec{v}=0$ which is a contradiction. The proof of~\eqref{coer1} is complete.
\end{proof}

\section{Decomposition around the sum of $N$ solitons}\label{S:3}
We prove in this section a general decomposition result around the sum of $N$ solitons. Let $N\ge 1$ and for any 
$n\in \{1,\ldots,N\}$, let $\bell_{n}=\ell_{n}{\be}_{1} $ where $-1<\ell_{n}<1$ and $\ell_{n}\ne \ell_{n'}$ for $n\ne n'$.
Let $q_{1},\ldots,q_{N}$ be any non-degenerate decaying excited states.
Denote by $I$ and $I^{0}$ the following two sets of indices
\begin{equation*}
\begin{aligned}
I^{0}&=\{(n,k):n=1,\ldots,N,\ k=1,\ldots, K_{n}\}, \quad |I^{0}|=\CARD I^{0}=\sum_{n=1}^{N}K_{n},\\
I&=\{(n,j):n=1,\ldots,N,\ j=1,\ldots, J_{n}\}, \ \ \quad |I|=\CARD I=\sum_{n=1}^{N}J_{n}.
\end{aligned}
\end{equation*}
We denote by $(\lambda_{n,j})_{(n,j)\in I}$, $\left(Y_{(n,j)}\right)_{(n,j)\in I}$, 
$\left(\Psi_{(n,k)}\right)_{(n,k)\in I^{0}}$ the negative eigenvalues, corresponding eigenfunctions and kernel functions for $q_{n}$ as defined in Lemma 2.1.

For $n=1,\ldots,N$, set
\begin{equation*}
Q_{n}(t,x)=q_{n,\ell_{n}}(x-\bell_{n}t),\quad 
\vec{Q}_{n}(t,x)=\left(\begin{array}{c}
	Q_{n}(t,x)\\
	-\ell_{n}\partial_{x_{1}}Q_{n}(t,x)
\end{array}\right).
\end{equation*}
Similarly, for $(n,k)\in I^{0}$,
\begin{equation*}
\Psi_{n,k}(t,x)=\Psi_{(n,k),\ell_{n}}(x-\bell_{n}t),\quad 
\vec{\Psi}_{n,k}(t,x)=\vec{\Psi}_{(n,k),\ell_{n}}(x-\bell_{n}t),
\end{equation*}
and for $(n,j)\in I$,
\begin{equation*}
\vec{Y}^{\pm}_{n,j}(t,x)=\vec{Y}^{\pm}_{(n,j),\ell_{n}}(x-\bell_{n}t),\quad 
\vec{Z}^{\pm}_{n,j}=\mathcal{H}_{n}\vec{Y}^{\pm}_{n,j},
\end{equation*}
where
\begin{equation*}
 \mathcal{H}_{n}=\left(\begin{array}{cc}
-\Delta-f'(Q_{n})& -\ell_{n}\partial_{x_{1}}\\
\ell_{n}\partial_{x_{1}}&1
\end{array}\right).
\end{equation*}
 Consider a time dependent $C^{1}$ function $\boldsymbol{b}$ of the form
\begin{equation*}
\boldsymbol{b}=(b_{n,k})_{(n,k)\in I^{0}}\in \R^{|I_{0}|}\quad \mbox{with}\quad |\boldsymbol{b}|\ll 1.
\end{equation*}
We introduce
\begin{equation*}
U=\sum_{n=1}^{N}Q_{n},\quad V=\sum_{(n,k)\in I^{0}}b_{n,k}\Psi_{n,k},
\end{equation*} 
\begin{equation*}
G=f(U+V)-\sum_{n=1}^{N}f(Q_{n})-\sum_{(n,k)\in I^{0}}b_{n,k}f'(Q_{n})\Psi_{n,k}.
\end{equation*}
First, we start with a technical lemma.
\begin{lemma}\label{le:tech}
Let $W_{1}$ and $W_{2}$ be continuous functions such that,
\begin{equation*}
|W_{1}(x)|+|W_{2}(x)|\lesssim \langle x\rangle^{-4}\quad \mbox{on}\  \RR.
\end{equation*}
Define
\begin{equation*}
W_{n_{1}}(t,x)=W_{1}\left(x-\bell_{n_{1}}t\right),\quad 
W_{n_{2}}(t,x)=W_{2}\left(x-\bell_{n_{2}}t\right).
\end{equation*}
Let $0<\alpha_{1}\le \alpha_{2}$ be such that $\alpha_{1}+\alpha_{2}>\frac{5}{4}$. 
There exists $T_{0}\gg 1$ such that,
for all $n_{1},n_{2}\in \{1,\ldots, N\}$ with $n_{1}\ne n_{2}$ and $t\ge T_{0}$, the following hold.
\begin{enumerate}
	\item If $\alpha_{2}> \frac{5}{4}$, 
	\begin{equation}\label{tech1}
	\int_{\RR}\big|W_{n_{1}}\big|^{\alpha_{1}}\big|W_{n_{2}}\big|^{\alpha_{2}}\d x\lesssim t^{-4\alpha_{1}}.
	\end{equation}
	\item  If $\alpha_{2}\le\frac{5}{4}$,
	\begin{equation}\label{tech2}
	\int_{\RR}\big|W_{n_{1}}\big|^{\alpha_{1}}\big|W_{n_{2}}\big|^{\alpha_{2}}\d x\lesssim t^{5-4(\alpha_{1}+\alpha_{2})}.
	\end{equation}
\end{enumerate}
\end{lemma}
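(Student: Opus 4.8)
The plan is to estimate the integral by splitting $\mathbb{R}^5$ according to which soliton center is closer, exploiting that the two centers $\bell_{n_1}t$ and $\bell_{n_2}t$ are separated by a distance $\gtrsim t$ since $\ell_{n_1}\ne\ell_{n_2}$. Set $d = |\bell_{n_1}-\bell_{n_2}|\, t = |\ell_{n_1}-\ell_{n_2}|\,t$, which is comparable to $t$. Write $\mathbb{R}^5 = \Omega_1 \cup \Omega_2$ where $\Omega_1 = \{x : |x-\bell_{n_1}t| \le d/2\}$ and $\Omega_2 = \{x : |x-\bell_{n_1}t| > d/2\}$. On $\Omega_1$ we have $|x-\bell_{n_2}t| \ge d/2 \gtrsim t$, so $|W_{n_2}(t,x)| \lesssim \langle x - \bell_{n_2}t\rangle^{-4} \lesssim t^{-4}$, and therefore
\begin{equation*}
\int_{\Omega_1}|W_{n_1}|^{\alpha_1}|W_{n_2}|^{\alpha_2}\,\mathrm{d}x
\lesssim t^{-4\alpha_2}\int_{\mathbb{R}^5}\langle x\rangle^{-4\alpha_1}\,\mathrm{d}x,
\end{equation*}
which is finite provided $4\alpha_1 > 5$. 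Symmetrically, on $\Omega_2$ we have $|W_{n_1}(t,x)| \lesssim t^{-4}$, giving $\int_{\Omega_2}|W_{n_1}|^{\alpha_1}|W_{n_2}|^{\alpha_2}\,\mathrm{d}x \lesssim t^{-4\alpha_1}\int_{\mathbb{R}^5}\langle x\rangle^{-4\alpha_2}\,\mathrm{d}x$, finite provided $4\alpha_2 > 5$.

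In the regime $\alpha_2 > 5/4$ (so in particular $4\alpha_2 > 5$), the $\Omega_2$ bound $t^{-4\alpha_1}$ already dominates. For the $\Omega_1$ piece, one must be careful only when $4\alpha_1 \le 5$, i.e. the spatial integral $\int \langle x\rangle^{-4\alpha_1}$ may diverge; in that case I restrict to $|x-\bell_{n_1}t|\le d/2 \lesssim t$ and estimate $\int_{|y|\le Ct}\langle y\rangle^{-4\alpha_1}\,\mathrm{d}y \lesssim t^{5-4\alpha_1}$ (or $\lesssim \log t$ if $4\alpha_1 = 5$), yielding $\int_{\Omega_1} \lesssim t^{-4\alpha_2}\cdot t^{5-4\alpha_1} = t^{5-4(\alpha_1+\alpha_2)}$; since $\alpha_2 > 5/4$ this exponent is $< -4\alpha_1$, so again the bound $t^{-4\alpha_1}$ dominates and \eqref{tech1} follows (the logarithmic borderline $4\alpha_1 = 5$ is absorbed into $t^{-4\alpha_1}\cdot t^{\delta}$ for small $\delta$, still $\lesssim t^{-4\alpha_1}$ after using $\alpha_2 > 5/4$ strictly, or handled by a harmless loss). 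In the regime $\alpha_2 \le 5/4$ we automatically have $\alpha_1 \le 5/4$ as well (since $\alpha_1 \le \alpha_2$), so both spatial integrals $\int_{|y|\le Ct}\langle y\rangle^{-4\alpha_i}$ behave like $t^{5-4\alpha_i}$; the $\Omega_1$ piece contributes $t^{-4\alpha_2}\cdot t^{5-4\alpha_1}$ and the $\Omega_2$ piece $t^{-4\alpha_1}\cdot t^{5-4\alpha_2}$, both equal to $t^{5-4(\alpha_1+\alpha_2)}$, which gives \eqref{tech2}. The hypothesis $\alpha_1+\alpha_2 > 5/4$ guarantees $5 - 4(\alpha_1+\alpha_2) < 0$ so the bound is a genuine decay rate, and $T_0$ is chosen large enough that $d/2 \ge 1$ and all the $\langle\cdot\rangle$-to-$t$ comparisons on the far regions are valid.

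The only mildly delicate point is the bookkeeping at the borderline exponents $4\alpha_i = 5$, where a logarithm appears in the truncated spatial integral; since every such borderline case in the statement has a strict inequality available ($\alpha_2 > 5/4$ in part (i), $\alpha_1+\alpha_2 > 5/4$ in part (ii)), the logarithm is always dominated by an arbitrarily small power of $t$ and can be discarded. No other step presents difficulty: the separation $|\bell_{n_1}-\bell_{n_2}|t \gtrsim t$ is immediate from $\ell_{n_1}\ne\ell_{n_2}$, and everything else is the elementary estimate $\int_{|y|\le R}\langle y\rangle^{-\beta}\,\mathrm{d}y \lesssim R^{5-\beta}$ for $\beta < 5$ together with $\langle y\rangle^{-\beta} \in L^1(\mathbb{R}^5)$ for $\beta > 5$.
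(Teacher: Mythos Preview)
Your argument for part~(i) is correct and essentially matches the paper's. However, part~(ii) has a genuine gap in the $\Omega_2$ region.

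You claim that on $\Omega_2 = \{|x-\bell_{n_1}t| > d/2\}$ one may pull out $|W_{n_1}|^{\alpha_1}\lesssim t^{-4\alpha_1}$ and then estimate $\int_{\Omega_2}|W_{n_2}|^{\alpha_2}\,\mathrm dx$ by $\int_{|y|\le Ct}\langle y\rangle^{-4\alpha_2}\,\mathrm dy \lesssim t^{5-4\alpha_2}$. But $\Omega_2$ is the \emph{exterior} of a ball about $\bell_{n_1}t$; after the change of variable $y=x-\bell_{n_2}t$ it is the exterior of a ball of radius $d/2$ centered at a point of norm $d$, which is an \emph{unbounded} region containing the origin. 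Since $4\alpha_2\le 5$, the integral $\int_{\Omega_2}\langle y\rangle^{-4\alpha_2}\,\mathrm dy$ diverges at infinity, and the asserted bound $t^{-4\alpha_1}\cdot t^{5-4\alpha_2}$ does not follow from what you wrote. The truncation $|y|\le Ct$ that you invoke is valid for the $\Omega_1$ piece (where $|x-\bell_{n_1}t|\le d/2$ genuinely restricts the relevant variable), but there is no analogous restriction on $|x-\bell_{n_2}t|$ within $\Omega_2$.

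The fix requires a further decomposition of the region far from \emph{both} centers, where the crude bound $|W_{n_1}|\lesssim t^{-4}$ throws away needed decay. The paper splits into the two balls $\Omega_1,\Omega_2$ (around each center) and their joint complement; on each ball it uses the refined pointwise estimate $|W_{n_k}(t,x)|\lesssim(\langle x-\bell_{n_{k'}}t\rangle+t)^{-4}$ rather than merely $t^{-4}$, and on $(\Omega_1\cup\Omega_2)^C$ it applies H\"older's inequality with exponents $\tfrac{\alpha_1+\alpha_2}{\alpha_1},\tfrac{\alpha_1+\alpha_2}{\alpha_2}$, which converts the product into integrals of $|W_{n_k}|^{\alpha_1+\alpha_2}$ over the far region; the hypothesis $4(\alpha_1+\alpha_2)>5$ is exactly what makes these finite. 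Either of these devices (the refined pointwise bound or the H\"older step) is the missing ingredient in your plan.
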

\begin{proof}
	For $k=1,2$, we denote
	\begin{equation*}
	\rho_{k}=x-\bell_{n_{k}}t,\quad 
	\Omega_{k}(t)=\left\{x\in \R^{5}:|\rho_{k}|\le 10^{-1}{|\bell_{n_{1}}-\bell_{n_{2}}|t}\right\}.
	\end{equation*}
	Let $T_{0}\gg 1$ large enough. For $t\ge T_{0}$, from the decay properties of $W_{1}$ and $W_{2}$,
	\begin{equation}\label{leest1}
	\left|W_{n_2}(t,x)\right|
	\lesssim \langle \rho_{2}\rangle^{-4}
	\lesssim (\langle \rho_{1} \rangle+t)^{-4},\quad \mbox{for}\ x\in \Omega_{1},
	\end{equation}
		\begin{equation}\label{leest2}
	\left|W_{n_1}(t,x)\right|
	\lesssim \langle \rho_{1}\rangle^{-4}
	\lesssim (\langle \rho_{2} \rangle+t)^{-4},\quad \mbox{for}\ x\in \Omega_{2},
	\end{equation}
	\begin{equation}\label{leest3}
		\left|W_{n_1}(t,x)\right|\lesssim \left(\langle \rho_{1}\rangle+t\right)^{-4}\lesssim t^{-4},\quad \quad \ \mbox{for}\ x\in \Omega_{1}^{C},
	\end{equation}
	\begin{equation}\label{leest4}
	\left|W_{n_2}(t,x)\right|\lesssim \left(\langle \rho_{2}\rangle+t\right)^{-4}\lesssim t^{-4},\quad \quad \ \mbox{for}\ x\in \Omega_{2}^{C}.
	\end{equation}
	Proof of (i). \emph{Case $\alpha_{1}> \frac{5}{4},\alpha_{2}> \frac{5}{4}$}. From~\eqref{leest1} and~\eqref{leest3}, we obtain
	\begin{equation*}
	\int_{\Omega_{1}}|W_{n_1}|^{\alpha_{1}}|W_{n_{2}}|^{\alpha_{2}}\d x\lesssim t^{-4\alpha_{2}}\int_{\Omega_{1}}|W_{n_1}|^{\alpha_{1}}\d x\lesssim t^{-4\alpha_{2}},
	\end{equation*}
	\begin{equation*}
    \int_{\Omega^{C}_{1}}|W_{n_1}|^{\alpha_{1}}|W_{n_{2}}|^{\alpha_{2}}\d x\lesssim t^{-4\alpha_{1}}\int_{\Omega^{C}_{1}}|W_{n_2}|^{\alpha_2}\d x\lesssim t^{-4\alpha_{1}},
	\end{equation*}
	which implies~\eqref{tech1}.
	
	\emph{Case $0<\alpha_{1}\le\frac{5}{4},\alpha_{2}>\frac{5}{4}$.} By~\eqref{leest1} and change of variable,
	\begin{equation*}
	\begin{aligned}
	\int_{\Omega_{1}}|W_{n_1}|^{\alpha_{1}}|W_{n_{2}}|^{\alpha_{2}}\d x
	&\lesssim
	\int_{\R^{5}} \langle \rho_{1}\rangle^{-4\alpha_1}(\langle \rho_{1}\rangle+t)^{-4\alpha_2}\d x\\
	&\lesssim \int_{\R^{5}}\langle x\rangle^{-4\alpha_1}\left(\langle x \rangle+t\right)^{-4\alpha_2}\d x\\
	&\lesssim t^{5-4(\alpha_{1}+\alpha_2)}\lesssim
	t^{-4\alpha_{1}}.
	\end{aligned}
	\end{equation*}
	Using again~\eqref{leest3},
	\begin{equation*}
	\int_{\Omega_{1}^{C}}|W_{n_1}|^{\alpha_{1}}|W_{n_2}|^{\alpha_{2}}\d x\lesssim
	t^{-4\alpha_{1}}\int_{\Omega_{1}^{C}}|W_{n_2}|^{\alpha_{2}}\d x\lesssim
	t^{-4\alpha_{1}}.
	\end{equation*}
	These estimates imply~\eqref{tech1}.
	
	Proof of (ii). First, from~\eqref{leest1},~\eqref{leest2} and change of variable, as before,
	\begin{equation*}
	\begin{aligned}
	&\int_{\Omega_{1}}|W_{n_1}|^{\alpha_{1}}|W_{n_2}|^{\alpha_{2}}\d x
	+\int_{\Omega_{2}}|W_{n_1}|^{\alpha_{1}}|W_{n_2}|^{\alpha_{2}}\d x\\
	&\lesssim\int_{\R^{5}}
	\left(\langle x\rangle^{-4\alpha_1}\left(\langle x \rangle+t\right)^{-4\alpha_2}+\langle x\rangle^{-4\alpha_2}\left(\langle x \rangle+t\right)^{-4\alpha_1}\right)\d x\lesssim t^{5-4(\alpha_{1}+\alpha_2)}.
	\end{aligned}
	\end{equation*}
	Second, by~\eqref{leest3},~\eqref{leest4}, the H\"older's inequality and change of variable,
	\begin{equation*}
	\begin{aligned}
	\int_{\left(\Omega_{1}\cup \Omega_{2}\right)^{C}}|W_{n_1}|^{\alpha_{1}}|W_{n_2}|^{\alpha_{2}}\d x
	&\lesssim \left(\int_{\Omega_{1}^{C}}|W_{n_1}|^{\alpha_{1}+\alpha_{2}}\right)^{\frac{\alpha_{1}}{\alpha_{1}+\alpha_{2}}}\left(\int_{\Omega_{2}^{C}}|W_{n_2}|^{\alpha_{1}+\alpha_{2}}\right)^{\frac{\alpha_{2}}{\alpha_{1}+\alpha_{2}}}\\
	&\lesssim \int_{\R^{5}}\left(\langle x \rangle+t\right)^{-4(\alpha_{1}+\alpha_{2})}\d x
	\lesssim t^{5-4(\alpha_{1}+\alpha_{2})}.
	\end{aligned}
	\end{equation*}
	Gathering above estimates, we obtain~\eqref{tech2}.
\end{proof}

Second, we introduce some pointwise estimates following from Taylor expansion, and omit its proof.
\begin{lemma} The following estimates hold.
	\begin{enumerate}
		\item For all $n=1,\ldots,N$,
		\begin{equation}\label{Taylor3}
		\big|f'(U)-f'(Q_{n})\big|\lesssim \sum_{n'\ne n}\left(|Q_{n'}||Q_{n}|^{\frac{1}{3}}+|Q_{n'}|^{\frac{4}{3}}\right).
		\end{equation}
		\item We have
		\begin{equation}\label{Taylor2}
		\left|f'(U+V)-f'(U)\right|\lesssim 
		\sum_{n=1}^{N}\left|V\right||Q_{n}|^{\frac{1}{3}}+\left|V\right|^{\frac{4}{3}}.
		\end{equation}
		\item For all $s\in \R$,  
		\begin{equation}\label{Taylor1}
		\left|f(U+V+s)-f(U+V)-f'(U+V)s\right|\lesssim
		\left(|U|^{\frac{1}{3}}+\left|V\right|^{\frac{1}{3}}\right)|s|^{2}+|s|^{\frac{7}{3}},
		\end{equation}
		\begin{equation}\label{Taylor4}
		\left|F(U+V+s)-F(U+V)-f(U+V)s\right|\lesssim
		\left(|U|^{\frac{4}{3}}+\left|V\right|^{\frac{4}{3}}\right)|s|^{2}+|s|^{\frac{10}{3}}.
		\end{equation}
	\end{enumerate}
\end{lemma}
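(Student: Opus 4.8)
The plan is to reduce all four estimates to two elementary facts about the nonlinearity. First, since $f(u)=|u|^{\frac43}u$, one has $f\in C^2(\R)$ with
\[
|f'(u)|=\tfrac{7}{3}|u|^{\frac43},\qquad |f''(u)|=\tfrac{28}{9}|u|^{\frac13},\qquad F''=f',
\]
so in particular $|f'(u)|\lesssim|u|^{\frac43}$, $|f''(u)|\lesssim|u|^{\frac13}$ and $|F''(u)|\lesssim|u|^{\frac43}$. Second, I would use the subadditivity-type inequality $(|x_1|+\cdots+|x_m|)^{p}\le C_{m,p}\big(|x_1|^{p}+\cdots+|x_m|^{p}\big)$, valid for every $p>0$ and every fixed $m$ (with $C_{m,p}=1$ when $0<p\le1$); here $m$ will always be at most $N$, which is fixed.

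For \eqref{Taylor3} and \eqref{Taylor2} I would first prove the uniform pointwise bound
\[
|f'(a+b)-f'(a)|\lesssim |b|\,|a|^{\frac13}+|b|^{\frac43}\qquad\text{for all }a,b\in\R,
\]
by a dichotomy on the size of $b$. If $|b|\le|a|$, the mean value theorem together with $|f''(u)|\lesssim|u|^{\frac13}$ and $(|a|+|b|)^{\frac13}\lesssim|a|^{\frac13}$ yields the first term; if $|b|>|a|$, the triangle inequality together with $|f'(u)|\lesssim|u|^{\frac43}$ and $|a+b|^{\frac43}\lesssim|b|^{\frac43}$ yields the second. Applying this with $a=Q_n$ and $b=\sum_{n'\ne n}Q_{n'}$, then bounding $|b|\le\sum_{n'\ne n}|Q_{n'}|$ and $|b|^{\frac43}\lesssim\sum_{n'\ne n}|Q_{n'}|^{\frac43}$ by subadditivity, gives \eqref{Taylor3}. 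Applying it with $a=U$ and $b=V$, then using $|U|^{\frac13}\le\big(\sum_n|Q_n|\big)^{\frac13}\le\sum_n|Q_n|^{\frac13}$, gives \eqref{Taylor2}.

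For \eqref{Taylor1} and \eqref{Taylor4} I would invoke Taylor's formula with integral remainder. Since $f\in C^2(\R)$,
\[
f(a+s)-f(a)-f'(a)s=s^{2}\int_0^1(1-\sigma)\,f''(a+\sigma s)\,\d\sigma,
\]
hence $|f(a+s)-f(a)-f'(a)s|\lesssim |s|^{2}\sup_{|\tau|\le|s|}|f''(a+\tau)|\lesssim |s|^{2}(|a|+|s|)^{\frac13}\lesssim |a|^{\frac13}|s|^{2}+|s|^{\frac73}$; taking $a=U+V$ and $|U+V|^{\frac13}\lesssim|U|^{\frac13}+|V|^{\frac13}$ gives \eqref{Taylor1}. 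The estimate \eqref{Taylor4} is proved identically with $f$ replaced by $F$ (so that $F'=f$ and $F''=f'$), which turns the exponent $\tfrac13$ into $\tfrac43$ and produces $|a|^{\frac43}|s|^{2}+|s|^{\frac{10}{3}}$, hence \eqref{Taylor4} after $|U+V|^{\frac43}\lesssim|U|^{\frac43}+|V|^{\frac43}$.

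There is no serious obstacle in this argument—which is why the paper omits the proof. The only points deserving a word of care are that $f'$ and $F'$ fail to be Lipschitz near $u=0$, but $f,F\in C^2(\R)$ since the second derivatives $|u|^{\frac13}\mathrm{sgn}(u)$ and $|u|^{\frac43}$ are continuous and vanish at the origin, so the integral remainder formula applies verbatim; and one must allow the implicit constants in the subadditivity inequalities to depend on $N$, which is harmless since $N$ is fixed throughout.
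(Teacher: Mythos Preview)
Your proof is correct and matches exactly the approach the paper has in mind: the paper explicitly states these are ``pointwise estimates following from Taylor expansion'' and omits the proof, and your dichotomy argument for $|f'(a+b)-f'(a)|\lesssim |b||a|^{1/3}+|b|^{4/3}$ together with the $C^2$ integral remainder for \eqref{Taylor1}--\eqref{Taylor4} is the standard way to obtain them. One cosmetic remark: your closing comment that ``$f'$ and $F'$ fail to be Lipschitz near $u=0$'' is not quite right---since $f''$ and $F''$ vanish at the origin, both $f'$ and $F'$ are in fact $C^1$ (hence locally Lipschitz); the genuine issue is only that $f''$ is unbounded at infinity, which you already handle correctly by bounding it on the segment $[U+V,\,U+V+s]$.
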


Third, we state some preliminary estimates related to the nonlinear interaction. We decompose $G$ as 
\begin{equation*}
G=G_{1}+G_{2}=G_{1,1}+G_{1,2}+G_{1,3}+G_{2},
\end{equation*}
where
\begin{equation*}
\begin{aligned}
&G_{1,1}=f(U+V)-f(U)-f'(U)V-\frac{1}{2}f''(U)V^{2},\\
&G_{1,2}=f'(U)V-\sum_{(n,k)\in I^{0}}b_{n,k}f'(Q_{n})\Psi_{n,k},\\
&G_{1,3}=f(U)-\sum_{n=1}^{N}f(Q_{n})\quad \mbox{and}\quad G_{2}=\frac{1}{2}f''(U)V^{2}.
\end{aligned}
\end{equation*}
For $n=1,\ldots,N$, set 
\begin{equation*}
G_{3,n}=\frac{1}{2}\sum_{k,k'=1}^{K_{n}}b_{n,k}b_{n,k'}f''(Q_{n})\Psi_{n,k}\Psi_{n,k'}\quad 
\mbox{and}\quad G_{3}=\sum_{n=1}^{N}G_{3,n}.
\end{equation*}
\begin{lemma}
	There exists $T_{0}\gg 1$ such that the following estimates hold.
	\begin{enumerate}
		\item \emph{Estimates on $G_{1}$.} For $t\ge T_{0}$,
	\begin{align}\label{est:G123}
	\|G_{1}\|_{L^{2}}\lesssim\|G_{1,1}\|_{L^{2}}+\|G_{1,2}\|_{L^{2}}+\|G_{1,3}\|_{L^{2}}\lesssim \sum_{(n,k)\in  I^{0}}|b_{n,k}|^{\frac{7}{3}}+t^{-4}. 
	\end{align}
	\item \emph{Expansion of $G_{2}$.} For $t\ge T_{0}$,
	\begin{equation}\label{est:G4}
	\begin{aligned}
	&\left\|G_{2}-G_{3}\right\|_{L^{2}}\lesssim \sum_{(n,k)\in I^{0}}|b_{n,k}|^{\frac{7}{3}}+t^{-4}.
	\end{aligned}
	\end{equation}
	\item \emph{Estimate on $G$.} For $t\ge T_{0}$,
	\begin{equation}\label{est:G}
	\|G\|_{L^{2}}\lesssim \sum_{(n,k)\in I^{0}}|b_{n,k}|^{2}+t^{-4}.
	\end{equation}
\end{enumerate}
\end{lemma}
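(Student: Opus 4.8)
The plan is to estimate each of the pieces $G_{1,1}$, $G_{1,2}$, $G_{1,3}$, $G_2-G_3$ separately in $L^2(\RR)$, using the Taylor-type pointwise bounds \eqref{Taylor1}--\eqref{Taylor4}, the decay of the solitons $Q_n$ and the kernel functions $\Psi_{n,k}$ from Lemma~\ref{le:Q}(i), and the interaction Lemma~\ref{le:tech}. Throughout I would use that, by the decaying-state hypothesis, $|Q_n(t,x)|\lesssim \langle x-\bell_n t\rangle^{-4}$ and $|\Psi_{n,k}(t,x)|\lesssim \langle x-\bell_n t\rangle^{-3}$, and that a single soliton or kernel function has all the $L^p(\RR)$ norms we need finite for $p$ large enough ($\langle x\rangle^{-4}\in L^p$ for $p>5/4$, $\langle x\rangle^{-3}\in L^p$ for $p>5/3$); also $|V|\lesssim |\boldsymbol b|\sum_n\langle x-\bell_nt\rangle^{-3}$.

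First, for $G_{1,3}=f(U)-\sum_n f(Q_n)$: expanding $f$ around each $Q_n$ and using \eqref{Taylor3}-type control, the pointwise bound is $|G_{1,3}|\lesssim\sum_{n\ne n'}\bigl(|Q_{n'}||Q_n|^{1/3}+|Q_{n'}|^{4/3}\bigr)$ (times powers that are harmless on compact-in-profile regions), so $\|G_{1,3}\|_{L^2}^2$ is controlled by sums of integrals $\int |W_{n_1}|^{\alpha_1}|W_{n_2}|^{\alpha_2}$ with $W_i$ decaying like $\langle\cdot\rangle^{-4}$ and exponents $(\alpha_1,\alpha_2)$ arising from squaring, e.g. $(8/3, 2)$ or $(2,2)$; in every case $\alpha_1+\alpha_2>5/4$ and $\alpha_2>5/4$, so Lemma~\ref{le:tech}(i) gives a bound $t^{-4\alpha_1}$ which, after taking the square root and minimizing over the terms, yields $\|G_{1,3}\|_{L^2}\lesssim t^{-4}$ (the slowest term being the one with the smallest $\alpha_1\ge 2$, and $t^{-8}$ under the square root gives $t^{-4}$). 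For $G_{1,2}=f'(U)V-\sum_{(n,k)}b_{n,k}f'(Q_n)\Psi_{n,k}=\sum_{(n,k)}b_{n,k}(f'(U)-f'(Q_n))\Psi_{n,k}$: using \eqref{Taylor3} and $|\Psi_{n,k}|\lesssim\langle\cdot\rangle^{-3}$, each summand is bounded by $|b_{n,k}|\sum_{n'\ne n}(|Q_{n'}||Q_n|^{1/3}+|Q_{n'}|^{4/3})\langle x-\bell_nt\rangle^{-3}$, which is an interaction between factors decaying at rate $4$ (from $Q_{n'}$) and rate $\ge 3$ (from $Q_n^{1/3}\Psi_{n,k}$, i.e. rate $4/3+3$, well above $4$); squaring and applying Lemma~\ref{le:tech}(i) again gives $\lesssim |b_{n,k}|^2 t^{-8}$, hence the contribution is $\lesssim|\boldsymbol b| t^{-4}$, absorbed into $t^{-4}$.

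Next, $G_{1,1}=f(U+V)-f(U)-f'(U)V-\tfrac12 f''(U)V^2$. Apply \eqref{Taylor1} with $s=V$ (relative to base point $U$), or more precisely a second-order Taylor remainder, to get $|G_{1,1}|\lesssim(|U|^{1/3}+|V|^{1/3})|V|^2+|V|^{7/3}$; but since we subtracted the full second-order term we actually gain, and the honest bound is $|G_{1,1}|\lesssim |V|^{7/3}+|V|^2|U|^{1/3}\cdot(\text{lower order})$. Wait — the cleanest route: $f$ is $C^{2}$ with $|f'''|\lesssim |u|^{1/3-?}$ being singular, so one instead writes $G_{1,1}=\int_0^1(1-\sigma)(f''(U+\sigma V)-f''(U))V^2\,d\sigma$ and bounds $|f''(U+\sigma V)-f''(U)|\lesssim|V|^{1/3}$, giving $|G_{1,1}|\lesssim|V|^{7/3}\lesssim|\boldsymbol b|^{7/3}\bigl(\sum_n\langle x-\bell_nt\rangle^{-3}\bigr)^{7/3}$; since $\langle\cdot\rangle^{-7}\in L^2(\RR)$ (as $14>5$) and the cross terms between different centers are even smaller by Lemma~\ref{le:tech}, $\|G_{1,1}\|_{L^2}\lesssim\sum_{(n,k)\in I^0}|b_{n,k}|^{7/3}$. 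Finally $G_2-G_3=\tfrac12 f''(U)V^2-\tfrac12\sum_n f''(Q_n)V_n^2$ where $V_n=\sum_k b_{n,k}\Psi_{n,k}$: this splits into $\tfrac12(f''(U)-\sum_n f''(Q_n)\mathbf 1)\cdot V^2$ plus the off-diagonal terms $b_{n,k}b_{n',k'}\Psi_{n,k}\Psi_{n',k'}$ with $n\ne n'$; the first is an interaction where $f''(U)-f''(Q_n)\lesssim\sum_{n'\ne n}(\cdots)$ decays at rate $\ge 4/3$ from one foreign soliton times $|V|^2$ decaying at rate $6$ centered variously, and the off-diagonal $\Psi$-products are rate-$3$ vs rate-$3$ at distinct centers; in both cases Lemma~\ref{le:tech} (using $\alpha_1+\alpha_2$ large, here $\ge 5/3+?$, giving the exponent $t^{-4}$ after the square root, or directly $t^{5-4(\alpha_1+\alpha_2)}$ which is $\lesssim t^{-4}$ once $\alpha_1+\alpha_2\ge 9/4$) yields $\|G_2-G_3\|_{L^2}\lesssim|\boldsymbol b|^2 t^{-4}+t^{-4}$, which is $\lesssim\sum|b_{n,k}|^{7/3}+t^{-4}$ after absorbing.

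Putting \eqref{est:G123} and \eqref{est:G4} together with $G=G_1+G_3+(G_2-G_3)$ and $\|G_3\|_{L^2}\lesssim\sum_{(n,k)\in I^0}|b_{n,k}|^2$ (a single-soliton estimate, since each $G_{3,n}$ is supported near one center, $|\Psi_{n,k}\Psi_{n,k'}|\lesssim\langle\cdot\rangle^{-6}$ and $|f''(Q_n)|\lesssim\langle\cdot\rangle^{-4/3}$, so $|G_{3,n}|\lesssim|\boldsymbol b|^2\langle\cdot\rangle^{-22/3}$, squared and integrated, finite) gives \eqref{est:G}, using $|b_{n,k}|^{7/3}\le|b_{n,k}|^2$ for $|\boldsymbol b|\ll 1$. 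The main obstacle is bookkeeping: correctly tracking the decay rate produced by each mixed term (which foreign soliton supplies the decisive rate-$4$ factor, and whether the remaining exponent $\alpha_1$ after squaring is $\ge 2$ so that $t^{-4\alpha_1}\le t^{-8}$ survives the square root as $t^{-4}$), and making sure the singularity of $f''$ and $f'''$ near zero is handled by difference estimates ($|f''(a+b)-f''(a)|\lesssim|b|^{1/3}$) rather than naive derivative bounds. Everything else is a direct combination of Lemma~\ref{le:tech} with the pointwise Taylor bounds.
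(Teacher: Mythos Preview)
Your approach is essentially the paper's: pointwise Taylor-type bounds combined with the interaction Lemma~\ref{le:tech}, followed by Young's inequality to trade factors of $|b_{n,k}|$ against powers of $t$. A couple of bookkeeping points deserve correction, though none of them is fatal.

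First, in your treatment of $G_{1,2}$ you only track the rate of $|Q_n|^{1/3}\Psi_{n,k}$ (indeed $\sim\langle\cdot\rangle^{-13/3}$) and then claim a final contribution $\lesssim |\boldsymbol b|\,t^{-4}$. But the second piece from \eqref{Taylor3}, namely $|Q_{n'}|^{4/3}|\Psi_{n,k}|$, has the $n$-centered factor decaying only like $\langle\cdot\rangle^{-3}$. After squaring, Lemma~\ref{le:tech}(i) applies with $\alpha_1=3/2$ (the $\Psi_{n,k}^2$ side) and $\alpha_2=8/3$, giving $\int\lesssim t^{-6}$ and hence only $\bigl\||Q_{n'}|^{4/3}\Psi_{n,k}\bigr\|_{L^2}\lesssim t^{-3}$. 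This is precisely where the paper invokes Young's inequality: $|b_{n,k}|\,t^{-3}\lesssim |b_{n,k}|^{7/3}+t^{-21/4}\lesssim |b_{n,k}|^{7/3}+t^{-4}$, recovering \eqref{est:G123}.

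Second, your splitting of $G_2-G_3$ is slightly garbled (the quantity $f''(U)-\sum_n f''(Q_n)$ is not the natural one, and the off-diagonal piece you write is missing its $f''$ weight). The paper writes $V^2=\sum_n V_n^2+\sum_{n\neq n'}V_nV_{n'}$ and splits $G_2-G_3=G_{2,1}+G_{2,2}$ with $G_{2,1}=\tfrac12 f''(U)\sum_{n\ne n'}V_nV_{n'}$ and $G_{2,2}=\tfrac12\sum_n(f''(U)-f''(Q_n))V_n^2$, then uses $|f''(U)-f''(Q_n)|\lesssim\sum_{n'\ne n}|Q_{n'}|^{1/3}$. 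The resulting $L^2$ rates are again slower than $t^{-4}$ in places (e.g.\ $\bigl\||Q_{n'}|^{1/3}\Psi_{n,k}^2\bigr\|_{L^2}\lesssim t^{-4/3}$), and Young's inequality is again what closes the estimate; your claimed ``$\lesssim|\boldsymbol b|^2 t^{-4}$'' is too optimistic as stated.

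With these fixes your argument is the paper's argument.
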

\begin{proof}
	Proof of (i). From Taylor formula, we have
	\begin{equation*}
	\big|G_{1,1}\big|\lesssim \big|V\big|^{\frac{7}{3}}\lesssim \sum_{(n,k)\in I^{0}}|b_{n,k}|^{\frac{7}{3}}\big|\Psi_{n,k}\big|^{\frac{7}{3}},\quad \big|G_{1,3}\big|\lesssim \sum_{n'\ne n}|Q_{n}|^{\frac{4}{3}}|Q_{n'}|,
	\end{equation*}
	\begin{equation*}
	\big|G_{1,2}\big|\lesssim \sum_{(n,k)\in I^{0}}|b_{n,k}|\bigg[\sum_{n'\ne n}|Q_{n}|^{\frac{1}{3}}|Q_{n'}||\Psi_{n,k}|+\sum_{n'\ne n }|Q_{n'}|^{\frac{4}{3}}|\Psi_{n,k}|\bigg].
	\end{equation*}
	Based on above pointwise estimates, Lemma~\ref{le:tech} (i), the Cauchy-Schwarz inequality and the Young's inequality, we obtain~\eqref{est:G123}.
	
	Proof of (ii). Observe that
	\begin{equation*}
	V^{2}=\sum_{n=1}^{N}\sum_{k,k'=1}^{K_{n}}b_{n,k}b_{n,k'}\Psi_{n,k}\Psi_{n,k'}+\sum_{n\ne n'}\sum_{k=1}^{K_{n}}\sum_{k'=1}^{K_{n'}}b_{n,k}b_{n,k'}\Psi_{n,k}\Psi_{n',k'}.
	\end{equation*} 
	Thus, we can decompose
	\begin{equation*}
	G_{2}=G_{3}+G_{2,1}+G_{2,2},
	\end{equation*}
	where
	\begin{equation*}
	\begin{aligned}
	G_{2,1}&=\frac{1}{2}\sum_{n\ne n'}\sum_{k=1}^{K_{n}}\sum_{k'=1}^{K_{n'}}b_{n,k}b_{n',k'}f''(U)\Psi_{n,k}\Psi_{n',k'},\\
	G_{2,2}&=\frac{1}{2}\sum_{n=1}^{N}\sum_{k,k'=1}^{K_{n}}b_{n,k}b_{n,k'}\big(f''(U)-f''(Q_{n})\big)\Psi_{n,k}\Psi_{n,k'}.
	\end{aligned}
	\end{equation*}
	By (i) of Lemma~\ref{le:tech}, we have
	\begin{equation*}
	\begin{aligned}
	\|G_{2,1}\|_{L^{2}}
	&\lesssim \sum_{n\ne n'}\sum_{k=1}^{K_{n}}\sum_{k'=1}^{K_{n'}}|b_{n,k}b_{n',k'}|\||Q_{n}|^{\frac{1}{3}}\Psi_{n,k}\Psi_{n',k'}\|_{L^{2}}\\
	&+\sum_{n\ne n'}\sum_{k'=1}^{K_{n'}}b^{2}_{n',k'}\||Q_{n}|^{\frac{1}{3}}\Psi^{2}_{n',k'}\|_{L^{2}}\lesssim
	\sum_{(n,k)\in I^{0}}|b_{n,k}|^{\frac{7}{3}}+t^{-4}.
	\end{aligned}
	\end{equation*}
	From Taylor formula, 
	\begin{equation*}
	\left|f''(U)-f''(Q_{n})\right|\lesssim \sum_{n'\ne n}|Q_{n'}|^{\frac{1}{3}}\quad \mbox{for all}\ n=1,\ldots,N.
	\end{equation*}
	Thus, using again (i) of Lemma~\ref{le:tech},
	\begin{equation*}
	\|G_{2,2}\|_{L^{2}}\lesssim \sum_{(n,k)\in I^{0}}|b_{n,k}|^{2}\bigg(\sum_{n'\ne n}\||Q_{n'}|^{\frac{1}{3}}\Psi_{n,k}^{2}\|_{L^{2}}\bigg)\lesssim \sum_{(n,k)\in I^{0}}|b_{n,k}|^{\frac{7}{3}}+t^{-4}.
	\end{equation*}
	Gathering above estimates, we obtain~\eqref{est:G4}.
	
	Proof of (iii). Estimate~\eqref{est:G} is a consequence of~\eqref{est:G123} and~\eqref{est:G4}.
\end{proof}

Last we prove a standard decomposition result around the sum of $N$ solitons.
\begin{proposition}[Properties of the decomposition]\label{le:deco}
There exist $T_{0}\gg 1$ and $0<\delta_{0}\ll 1$ such that if $\vec{u}(t)=(u(t),\partial_{t}u(t))$
is a solution of~\eqref{wave} on $[T_{1},T_{2}]$, where $T_{0}\le T_{1}<T_{2}<+\infty$, such that 
for any $t\in [T_{1},T_{2}]$
\begin{equation}\label{estQ0}
\|\vec{u}(t)-\sum_{n=1}^{N}\vec{Q}_{n}(t)\|_{\E}\le \delta_{0},
\end{equation}
then there exists $C^{1}$ functions $\bb=(b_{n,k})_{(n,k)\in I^{0}}$ on $[T_{1},T_{2}]$ such that,
$\vec{\varepsilon}(t)$ being defined by 
\begin{equation}\label{defe}
\vec{\varepsilon}(t)=
\left(\begin{array}{c}\varepsilon \\\eta \end{array}\right)=\vec{u}(t)-\sum_{n=1}^{N}\vec{Q}_{n}(t)-
\sum_{(n,k)\in I^{0}}b_{n,k}\vec{\Psi}_{n,k}
\end{equation}
the following hold on $[T_{1},T_{2}]$,

\emph{\rm(i)  First properties of the decomposition.}
For all $(n,k)\in I^{0}$ and $t\in [T_{1},T_{2}]$,
\begin{equation}\label{orth}
\big(\vec{\varepsilon}(t), \vec{\Psi}_{n,k}(t))_{\E}=0,\quad |b_{n,k}(t)|\lesssim \|\vec{u}(t)-\sum_{n=1}^{N}\vec{Q}_{n}(t)\|_{\E}.
\end{equation}

\emph{\rm(ii)  Equation of $\vec{\varepsilon}$.} It holds
\begin{equation}\label{equpe}
\left\{ \begin{aligned}
&\partial_t\varepsilon=\eta-{\rm{Mod}_{1}},\\
&\partial_{t}\eta=\Delta\varepsilon+f(U+V+\varepsilon)-f(U+V)
-{\rm{Mod}_{2}}
+G
\end{aligned}\right.
\end{equation}
where
\begin{equation*}
{\rm{Mod}_{1}}=\sum_{(n,k)\in I^{0}}\dot{b}_{n,k}{\Psi}_{n,k}\quad \mbox{and}\quad  {\rm{Mod}}_{2}=-\sum_{(n,k)\in I^{0}}\dot{b}_{n,k}\ell_{n}\partial_{x_{1}}\Psi_{n,k}.
\end{equation*}

\emph{\rm(iii) Estiamtes for $\bb$.} For $t\in [T_{1},T_{2}]$, we have
\begin{equation}\label{estb}
\sum_{(n,k)\in I^{0}}|\dot{b}_{n,k}(t)|\lesssim \|\vec{\varepsilon}(t)\|_{\E}+\sum_{(n,k)\in I^{0}}|b_{n,k}|^{2}+t^{-4}.
\end{equation}

\emph{\rm(iv) Unstable directions.} Let $a^{\pm}_{n,j}=\left(\vec{\varepsilon},\vec{Z}^{\pm}_{n,j}\right)_{L^{2}}$ for all $(n,j)\in I$. Then
\begin{equation}\label{equa+-}
\left|\frac{{\rm d}}{{\rm{d}}t}a^{\pm}_{n,j}(t)\pm \alpha_{n,j}a^{\pm}_{n,j}(t)\right|\lesssim 
\|\vec{\varepsilon}(t)\|^{2}_{\E}+\sum_{(n,k)\in I^{0}}|b_{n,k}|^{2}+t^{-4}.
\end{equation}
where $\alpha_{n,j}=\lambda_{n,j}(1-\ell^{2}_{n})^{^{\frac{1}{2}}}>0$.
\end{proposition}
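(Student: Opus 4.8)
The plan is to establish the four items (i)--(iv) in order, following the modulation scheme of C\^ote--Martel and Martel--Merle~\cite{CMakg,MMwave1}. For \emph{item (i)}, the key observation is that the ansatz~\eqref{defe} is affine in $\bb$, so the orthogonality conditions $(\vec\varepsilon(t),\vec\Psi_{n,k}(t))_{\E}=0$ form a \emph{linear} system $\mathcal{M}(t)\bb(t)=\big(\big(\vec{u}(t)-\sum_{m}\vec{Q}_{m}(t),\vec\Psi_{n,k}(t)\big)_{\E}\big)_{(n,k)\in I^{0}}$, where $\mathcal{M}(t)_{(n,k),(m,l)}=\big(\vec\Psi_{m,l}(t),\vec\Psi_{n,k}(t)\big)_{\E}$ is the Gram matrix of the $\vec\Psi$'s. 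Its diagonal blocks $n=m$ are independent of $t$ and equal the Gram matrix of $(\Psi_{(n,k),\ell_{n}})_{k}$ for the form $(\varphi,\psi)\mapsto(\varphi,\psi)_{\dot H^{1}}+\ell_{n}^{2}(\partial_{x_{1}}\varphi,\partial_{x_{1}}\psi)_{L^{2}}$, which is positive definite because $(\Psi_{(n,k),\ell_{n}})_{k}$ spans $\mathrm{Ker}\,\mathcal{L}_{\ell_{n}}$ and is in particular linearly independent (Lemma~\ref{le:Q}); the off-diagonal blocks $n\ne m$ are cross-soliton interactions of algebraically decaying functions, hence $O(t^{-3})$ by Lemma~\ref{le:tech} and the decay estimate for $\Psi_{k}$ in Lemma~\ref{le:Q}(i). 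Thus for $T_{0}$ large $\mathcal{M}(t)$ is invertible on $[T_{0},\infty)$ with $\|\mathcal{M}(t)^{-1}\|\lesssim 1$; then $\bb(t)=\mathcal{M}(t)^{-1}(\cdots)$ is the unique such choice, is $C^{1}$ in $t$ (the pairings involved being $C^{1}$ along the flow of~\eqref{wave} in $\dot H^{1}\times L^{2}$), and satisfies $|b_{n,k}(t)|\lesssim\|\vec{u}(t)-\sum_{m}\vec{Q}_{m}(t)\|_{\E}$; shrinking $\delta_{0}$ keeps $\vec{u}(t)-\sum_{m}\vec{Q}_{m}(t)$ in the relevant ball, giving~\eqref{orth}.

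\emph{Items (ii) and (iii)} are direct computations. Substituting $\vec{u}=\sum_{n}\vec{Q}_{n}+\sum_{(n,k)}b_{n,k}\vec\Psi_{n,k}+\vec\varepsilon$ in $\partial_{t}\vec{u}=(\partial_{t}u,\Delta u+f(u))$ and using $\partial_{t}Q_{n}=-\ell_{n}\partial_{x_{1}}Q_{n}$, $\partial_{t}\Psi_{n,k}=-\ell_{n}\partial_{x_{1}}\Psi_{n,k}$, the first component gives $\partial_{t}\varepsilon=\eta-\mathrm{Mod}_{1}$; for the second, the time-derivative of the decomposition is free of $\ddot{b}$, the traveling profile equations for $Q_{n}$ (from~\eqref{equ:Ql}) and for $\Psi_{n,k}$ (from $\Psi_{(n,k),\ell_{n}}\in\mathrm{Ker}\,\mathcal{L}_{\ell_{n}}$) produce $-\sum_{n}f(Q_{n})-\sum_{(n,k)}b_{n,k}f'(Q_{n})\Psi_{n,k}$, which by the definition of $G$ recombines into $f(U+V+\varepsilon)-f(U+V)+G$, and the $\dot{b}$-terms assemble into $-\mathrm{Mod}_{2}$, proving~\eqref{equpe}. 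For~\eqref{estb}, differentiate $0=(\vec\varepsilon,\vec\Psi_{n,k})_{\E}$ in $t$ and insert~\eqref{equpe}: with $\partial_{t}\vec\varepsilon=-\sum_{(m,l)}\dot{b}_{m,l}\vec\Psi_{m,l}+\vec{S}$, $\vec{S}=\big(\eta,\Delta\varepsilon+f(U+V+\varepsilon)-f(U+V)+G\big)$, one obtains $\mathcal{M}(t)\dot\bb=\big((\vec{S},\vec\Psi_{n,k})_{\E}+(\vec\varepsilon,\partial_{t}\vec\Psi_{n,k})_{\E}\big)_{(n,k)}$ with the same $\mathcal{M}(t)$; on the right, $(\vec\varepsilon,\partial_{t}\vec\Psi_{n,k})_{\E}$, $(\eta,\Psi_{n,k})_{\dot H^{1}}$ and $(\Delta\varepsilon,-\ell_{n}\partial_{x_{1}}\Psi_{n,k})_{L^{2}}$ are $\lesssim\|\vec\varepsilon\|_{\E}$ after integration by parts and the bound $|\partial_{x}^{\alpha}\Psi_{k}|\lesssim\langle x\rangle^{-(3+|\alpha|)}$, $|\alpha|\le2$ (Lemma~\ref{le:Q}(i); in particular $\Psi_{k}\in\dot H^{1}\cap\dot H^{2}$), the nonlinear term $\big(f(U+V+\varepsilon)-f(U+V),-\ell_{n}\partial_{x_{1}}\Psi_{n,k}\big)_{L^{2}}$ is $\lesssim\|\varepsilon\|_{L^{10/3}}+\|\varepsilon\|_{L^{10/3}}^{7/3}\lesssim\|\vec\varepsilon\|_{\E}$ by H\"older, $\dot H^{1}(\R^{5})\hookrightarrow L^{10/3}$ and $\|\vec\varepsilon\|_{\E}\le\delta_{0}$, and $\big(G,-\ell_{n}\partial_{x_{1}}\Psi_{n,k}\big)_{L^{2}}\lesssim\|G\|_{L^{2}}\lesssim\sum_{(n,k)}|b_{n,k}|^{2}+t^{-4}$ by~\eqref{est:G}. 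Multiplying by $\mathcal{M}(t)^{-1}$ gives~\eqref{estb}.

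\emph{Item (iv).} By~\eqref{HY} read in the traveling frame, $\vec{Z}^{\pm}_{n,j}=\mathcal{H}_{n}\vec{Y}^{\pm}_{n,j}=\mp\alpha_{n,j}\,\mathrm{J}\vec{Y}^{\pm}_{n,j}$. From~\eqref{equpe}, for each fixed $n$ one rewrites
\[
\partial_{t}\vec\varepsilon=\mathrm{J}\mathcal{H}_{n}\vec\varepsilon-\ell_{n}\partial_{x_{1}}\vec\varepsilon-\sum_{(m,l)\in I^{0}}\dot{b}_{m,l}\vec\Psi_{m,l}+\left(\begin{array}{c}0\\ \mathcal{R}_{n}+G\end{array}\right),\qquad\mathcal{R}_{n}:=f(U+V+\varepsilon)-f(U+V)-f'(Q_{n})\varepsilon.
\]
Differentiating $a^{\pm}_{n,j}=(\vec\varepsilon,\vec{Z}^{\pm}_{n,j})_{L^{2}}$ and using $\partial_{t}\vec{Z}^{\pm}_{n,j}=-\ell_{n}\partial_{x_{1}}\vec{Z}^{\pm}_{n,j}$, the two $-\ell_{n}\partial_{x_{1}}$ contributions cancel by integration by parts and the main term is $\big(\mathrm{J}\mathcal{H}_{n}\vec\varepsilon,\vec{Z}^{\pm}_{n,j}\big)_{L^{2}}=\mp\alpha_{n,j}\big(\mathrm{J}\mathcal{H}_{n}\vec\varepsilon,\mathrm{J}\vec{Y}^{\pm}_{n,j}\big)_{L^{2}}=\mp\alpha_{n,j}\big(\mathcal{H}_{n}\vec\varepsilon,\vec{Y}^{\pm}_{n,j}\big)_{L^{2}}=\mp\alpha_{n,j}a^{\pm}_{n,j}$, using that $\mathrm{J}$ preserves the $L^{2}$ inner product and $\mathcal{H}_{n}$ is self-adjoint. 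It remains to estimate $-\sum_{(m,l)}\dot{b}_{m,l}(\vec\Psi_{m,l},\vec{Z}^{\pm}_{n,j})_{L^{2}}$ and $\big((0,\mathcal{R}_{n}+G),\vec{Z}^{\pm}_{n,j}\big)_{L^{2}}$ by $\lesssim\|\vec\varepsilon\|_{\E}^{2}+\sum|b_{n,k}|^{2}+t^{-4}$: for the first, $(\vec\Psi_{m,l},\vec{Z}^{\pm}_{n,j})_{L^{2}}$ vanishes when $m=n$ (since $\vec\Psi_{n,l}\in\mathrm{Ker}\,\mathcal{H}_{n}$) and is $O(t^{-4})$ when $m\ne n$ (exponential localization of $\vec{Z}^{\pm}_{n,j}$ near the $n$-th soliton), so~\eqref{estb} and Young's inequality suffice; for the second, split $\mathcal{R}_{n}=[f(U+V+\varepsilon)-f(U+V)-f'(U+V)\varepsilon]+[f'(U+V)-f'(U)]\varepsilon+[f'(U)-f'(Q_{n})]\varepsilon$ and use, after pairing against the exponentially decaying $\vec{Z}^{\pm}_{n,j}$, the bounds~\eqref{Taylor1},~\eqref{Taylor2},~\eqref{Taylor3} together with H\"older, $\dot H^{1}(\R^{5})\hookrightarrow L^{10/3}$, the boundedness of $U,V$, the $\langle\cdot\rangle^{-4}$-decay of $Q_{m}$ for $m\ne n$, and~\eqref{est:G}, yielding respectively $\|\vec\varepsilon\|_{\E}^{2}$, $|\bb|\,\|\vec\varepsilon\|_{\E}$ and $t^{-4}+\sum|b_{n,k}|^{2}$. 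With $\|\vec\varepsilon\|_{\E}\le\delta_{0}$ and Young's inequality this gives~\eqref{equa+-}.

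\emph{The main difficulty} is not conceptual but is the interaction bookkeeping, and the decisive point — the reason the \emph{decaying}-state hypothesis is imposed — is that every cross-soliton interaction appearing on the right-hand side of an estimate must be $O(t^{-4})$ (Lemma~\ref{le:tech} with $\alpha_{1}+\alpha_{2}>\frac{5}{4}$). One must therefore check that the interactions which, with only $\langle\cdot\rangle^{-4}$ decay, are merely $O(t^{-3})$ — for instance $\int\nabla\Psi_{n,k}\cdot\nabla\Psi_{m,l}$ with $n\ne m$ — enter solely through the invertible matrix $\mathcal{M}(t)$ of items (i) and (iii), never as error terms. A secondary nuisance is the low regularity $\eta\in L^{2}$: the pairings $(\eta,\cdot)_{\dot H^{1}}$, the system~\eqref{equpe} and its time-differentiation have to be read through $\Psi_{k}\in\dot H^{1}\cap\dot H^{2}$ and justified, as usual, by a density argument using the well-posedness of~\eqref{wave} in $\dot H^{1}\times L^{2}$.
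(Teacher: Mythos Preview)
Your proposal is correct and follows essentially the same approach as the paper: the linear system via the Gram matrix $\mathcal{M}(t)$ in (i), the direct substitution in (ii), differentiation of the orthogonality conditions in (iii), and the algebraic identity $\mathcal{H}_n\vec{Y}^\pm_{n,j}=\mp\alpha_{n,j}\mathrm{J}\vec{Y}^\pm_{n,j}$ in (iv) are exactly the paper's ingredients. The only cosmetic difference is that in (iv) you rewrite $\partial_t\vec\varepsilon$ in terms of $\mathrm{J}\mathcal{H}_n-\ell_n\partial_{x_1}$ for each fixed $n$, whereas the paper uses the single operator $\vec{\mathcal{L}}=\begin{pmatrix}0&1\\\Delta+\sum_m f'(Q_m)&0\end{pmatrix}$ and then splits off the $n'\ne n$ pieces as an additional error $\sum_{n'\ne n}(\varepsilon,f'(Q_{n'})Z^\pm_{n,j})_{L^2}$; your organization absorbs this directly into $\mathcal{R}_n$, which is arguably cleaner but leads to the same estimates.
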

\begin{proof}
	Proof of (i).
Let $T_{0}\gg 1$, fix $t\ge T_{0}$. Note that, the orthogonality conditions in~\eqref{orth}
is equivalent to the following matrix identity,
\begin{equation*}
\mathcal{M}\bb=\left(\bigg(\vec{u}-\sum_{n=1}^{N}\vec{Q}_{n},\vec{\Psi}_{n,k}\bigg)_{\E}\right)_{(n,k)\in I^{0}},
\end{equation*}
where $\bb=(b_{n,k})_{(n,k)\in I^{0}}$ written in one row and 
\begin{equation*}
\mathcal{M}=\left(\bigg(\vec{\Psi}_{n,k},\vec{\Psi}_{n',k'}\bigg)_{\E}\right)_{(n,k),(n',k')\in I^{0}}.
\end{equation*}
Moreover, from (ii) of Lemma~\ref{le:tech} and the decay property of $\vec{\Psi}_{n,k}$,
\begin{equation*}
\mathcal{M}={\rm{diag}}\left(\mathcal{M}_{1},\ldots,\mathcal{M}_{N}\right)+O(t^{-2})\quad \mbox{where}\
\mathcal{M}_{n}=\left((\vec{\Psi}_{n,k},\vec{\Psi}_{n,k'})_{\E}\right)_{k,k'=1,\ldots,K_{n}}.
\end{equation*}
Note that for fixed $n$, the family $\left(\vec{\Psi}_{n,k}\right)_{k=1,\ldots,K_{n}}$ being linearly independent,
the Gram matrix $\mathcal{M}_{n}$ is invertible.
Hence, $\mathcal{M}$ is invertible, for $T_{0}$ large enough, and $\mathcal{M}^{-1}$ has uniform norm in $t\ge T_{0}$.
Therefore, we obtain 
\begin{equation*}
\bb=\mathcal{M}^{-1}\bigg(\bigg(\vec{u}-\sum_{n=1}^{N}\vec{Q}_{n},\vec{\Psi}_{n,k}\bigg)_{\E}\bigg)_{(n,k)\in I^{0}}
\end{equation*}
and
\begin{equation*}
|\bb|\le\left\|\mathcal{M}^{-1}\right\|\bigg\|\bigg(\bigg(\vec{u}-\sum_{n=1}^{N}\vec{Q}_{n},\vec{\Psi}_{n,k}\bigg)_{\E}\bigg)_{(n,k)\in I^{0}}\bigg\|\lesssim \bigg\|\vec{u}(t)-\sum_{n=1}^{N}\vec{Q}_{n}(t)\bigg\|_{\E}.
\end{equation*}

Proof of (ii). First, by the definition of $\vec{\varepsilon}$ in~\eqref{defe}
\begin{align*}
\partial_{t}\varepsilon
&=\partial_{t}u-\sum_{n=1}^{N}\partial_{t}Q_{n}
-\sum_{(n,k)\in I^{0}}b_{n,k}\partial_{t}\Psi_{n,k}-\sum_{(n,k)\in I^{0}}\dot{b}_{n,k}\Psi_{n,k}\\
&=\eta-\sum_{(n,k)\in I^{0}}\dot{b}_{n,k}\Psi_{n,k}.
\end{align*}
Second, using again~\eqref{defe} and~\eqref{wave},
\begin{align*}
\partial_{t}\eta&=\partial_{t}^{2}u+\sum_{n=1}^{N}\partial_{t}\big(\ell_{n}\partial_{x_{1}}Q_{n})
+\sum_{(n,k)\in I^{0}}b_{n,k}\partial_{t}(\ell_{n}\partial_{x_{1}}\Psi_{n,k})
+\sum_{(n,k)\in I^{0}}\dot{b}_{n,k}\big(\ell_{n}\partial_{x_{1}}\Psi_{n,k}\big)\\
&=\Delta u+f(u)-\sum_{n=1}^{N}\ell_{n}^{2}\partial_{x_{1}}^{2}Q_{n}
-\sum_{(n,k)\in I^{0}}b_{n,k}\ell^{2}_{n}\partial_{x_{1}}^{2}\Psi_{n,k}
+\sum_{(n,k)\in I^{0}}\dot{b}_{n,k}(\ell_{n}\partial_{x_{1}}\Psi_{n,k}).
\end{align*}
From~\eqref{defe}, $-(1-\ell_{n}^{2})\partial_{x_{1}}^{2}Q_{n}-\bar{\Delta} Q_{n}-f(Q_{n})=0$, $-(1-\ell_{n}^{2})\partial_{x_{1}}^{2}\Psi_{n,k}-\bar{\Delta} \Psi_{n,k}-f'(Q_{n})\Psi_{n,k}=0$ and the definition of $G$,
\begin{align*}
&\Delta u+f(u)-\sum_{n=1}^{N}\ell_{n}^{2}\partial_{x_{1}}^{2}Q_{n}
-\sum_{(n,k)\in I^{0}}b_{n,k}\ell^{2}_{n}\partial_{x_{1}}^{2}\Psi_{n,k}\\
&=\Delta\varepsilon+f(U+V+\varepsilon)-f(U+V)+G.
\end{align*}
Therefore,
\begin{equation*}
\partial_{t}\eta=\Delta \varepsilon+f(U+V+\varepsilon)-f(U+V)+\sum_{(n,k)\in I^{0}}\dot{b}_{n,k}\ell_{n}\partial_{x_{1}}\Psi_{n,k}+G.
\end{equation*}

Proof of (iii). First, we decompose
\begin{equation*}
f(U+V+\varepsilon)-f(U+V)=\sum_{n=1}^{N}f'(Q_{n})\varepsilon+R_{1}+R_{2}+R_{3},
\end{equation*}
where
\begin{equation*}
R_{1}=f(U+V+\varepsilon)-f(U+V)-f'(U+V)\varepsilon,
\end{equation*}
\begin{equation*}
R_{2}=\left(f'(U+V)-f'(U)\right)\varepsilon,\quad 
R_{3}=\left(f'(U)-\sum_{n=1}^{N}f'(Q_{n})\right)\varepsilon.
\end{equation*}
Therefore, from~\eqref{equpe}, we obtain
\begin{equation}\label{eqpvece}
\partial_{t} {\vec{\varepsilon}}=\vec{\mathcal{L}}\vec{\varepsilon}-\vec{\rm{{Mod}}}+\vec{G}+\vec{R_{1}}+\vec{R}_{2}+\vec{R}_{3},
\end{equation}
where
\begin{equation*}
\vec{\mathcal{L}}=\left(\begin{array}{cc}0 & 1 \\ \Delta+\sum_{n=1}^{N}f'(Q_{n}) & 0 \end{array}\right),\
\vec{{\rm{Mod}}}=\left(\begin{array}{c}
{\rm{Mod}}_{1}\\
{{\rm{Mod}}}_{2}
\end{array}\right),\
\end{equation*}
and
\begin{equation*}
\vec{G}=\left(\begin{array}{c} 0\\ G \end{array}\right),\
\vec{R}_{1}=\left(\begin{array}{c} 0\\ R_{1} \end{array}\right),\
\vec{R}_{2}=\left(\begin{array}{c} 0\\ R_{2} \end{array}\right),\
\vec{R}_{3}=\left(\begin{array}{c} 0\\ R_{3}\end{array}\right).
\end{equation*}
We differentiate the orthogonality condition $(\vec{\varepsilon},\vec{\Psi}_{n,k})_{\E}=0$ in~\eqref{orth} and using~\eqref{eqpvece},
\begin{align*}
0=\frac{\rm{d}}{{\rm d}t}(\vec{\varepsilon},\vec{\Psi}_{n,k})_{\E}
=&(\partial_{t}\vec{\varepsilon},\vec{\Psi}_{n,k})_{\E}
-(\vec{\varepsilon},\partial_{t}\vec{\Psi}_{n,k})_{\E}\\
=&(\mathcal{\vec{L}}\vec{\varepsilon}, \vec{\Psi}_{n,k})_{\E}-\sum_{(n',k')\in I^{0}}\dot{b}_{n',k'}(\vec{\Psi}_{n,k},\vec{\Psi}_{n',k'})_{\E}
+\left(\vec{G},\vec{\Psi}_{n,k}\right)_{\E}\\
+&\left(\vec{R}_{1},\vec{\Psi}_{n,k}\right)_{\E}
+\left(\vec{R}_{2}+\vec{R}_{3},\vec{\Psi}_{n,k}\right)_{\E}
-\left(\vec{\varepsilon},\ell_{n}\partial_{x_{1}}\vec{\Psi}_{n,k}\right)_{\E}.
\end{align*}
By integration by parts and the decay properties of $\vec{\Psi}_{n,k}$, the first term is 
\begin{equation*}
(\vec{\mathcal{L}}\vec{\varepsilon}, \vec{\Psi}_{n,k})_{\E}
=\left({\rm{J}}\vec{\varepsilon},\vec{\mathcal{L}}{\rm{J}}\vec{\Psi}_{n,k}\right)_{\E}=
O\left(\|\vec{\varepsilon}\|_{\E}\right).
\end{equation*}
From~\eqref{est:G} and the Cauchy-Schwarz inequality,
\begin{equation*}
\left|\left(\vec{G},\vec{\Psi}_{n,k}\right)_{\E}\right|\lesssim \|G\|_{L^{2}}\lesssim \sum_{(n,k)\in I^{0}}|b_{n,k}|^{2}+t^{-4}.
\end{equation*}
Next, from~\eqref{Taylor3},~\eqref{Taylor2},~\eqref{Taylor1}, the Sobolev embedding theorem and the decay properties of $\vec{\Psi}_{n,k}$,
\begin{equation*}
\left|\left(\vec{R}_{1},\vec{\Psi}_{n,k}\right)_{\E}\right|
\lesssim \left|\left((|U|^{\frac{1}{3}}+|V|^{\frac{1}{3}})\varepsilon^{2}+|\varepsilon|^{\frac{7}{3}},
\partial_{x_{1}}\Psi_{n,k}\right)_{L^{2}}\right|
\lesssim \|\vec{\varepsilon}\|_{\E}^{2},
\end{equation*}
\begin{equation*}
\left|\left(\vec{R}_{2}+\vec{R}_{3},\vec{\Psi}_{n,k}\right)_{\E}\right|+
\left|\left(\vec{\varepsilon},\ell_{n}\partial_{x_{1}}\vec{\Psi}_{n,k}\right)_{\E}\right|\lesssim
\|R_{2}\|_{L^{2}}+\|R_{3}\|_{L^{2}}+\|\vec{\varepsilon}\|_{\E}
\lesssim \|\vec{\varepsilon}\|_{\E}.
\end{equation*}
Gathering above estimates and proceeding similarly for all $(n,k)\in I^{0}$,
\begin{equation*}
\mathcal{M}\dot{\bb}=O\bigg(\|\vec{\varepsilon}\|_{\E}+\sum_{(n,k)\in I^{0}}|b_{n,k}|^{2}+t^{-4}\bigg),
\end{equation*}
where $\dot{\bb}=\left(\dot{b}_{n,k}\right)_{(n,k)\in I^{0}}$ written in one row. Therefore, from the matrix
$\mathcal{M}^{-1}$ being uniformly bounded, we obtain~\eqref{estb}.

Proof of (iv). Using~\eqref{eqpvece}, we compute
\begin{equation*}
\begin{aligned}
\frac{\d}{\d t}a_{n,j}^{\pm}
=&\left(\partial_{t}\vec{\varepsilon},\vec{Z}_{n,j}^{\pm}\right)_{L^{2}}
+\left(\vec{\varepsilon},\partial_{t}\vec{Z}_{n,j}^{\pm}\right)_{L^{2}}\\
=&\left(\vec{\mathcal{L}}\vec{\varepsilon},\vec{Z}_{n,j}^{\pm}\right)_{L^{2}}
-\ell_{n}\left(\vec{\varepsilon},\partial_{x_{1}}\vec{Z}_{n,j}^{\pm}\right)_{L^{2}}
-\left(\vec{\rm{Mod}},\vec{Z}_{n,j}^{\pm}\right)_{L^{2}}\\
+&\left(\vec{G},\vec{Z}^{\pm}_{n,j}\right)_{L^{2}}+\left(\vec{R}_{1},\vec{Z}^{\pm}_{n,j}\right)_{L^{2}}
+\left(\vec{R}_{2},\vec{Z}^{\pm}_{n,j}\right)_{L^{2}}+\left(\vec{R}_{3},\vec{Z}^{\pm}_{n,j}\right)_{L^{2}}.
\end{aligned}
\end{equation*}
From~\eqref{HY}, integration by parts and $-{\rm{J}}^{2}$ is identity matrix, we have
\begin{equation*}
\begin{aligned}
\left(\vec{\mathcal{L}}\vec{\varepsilon},\vec{Z}_{n,j}^{\pm}\right)_{L^{2}}
-\ell_{n}\left(\vec{\varepsilon},\partial_{x_{1}}\vec{Z}_{n,j}^{\pm}\right)_{L^{2}}
&=-\left(\vec{\varepsilon},\mathcal{H}_{n}{\rm{J}}\vec{Z}_{n,j}^{\pm}\right)_{L^{2}}+
\sum_{n'\ne n}\left(\varepsilon,f'(Q_{n'})Z^{\pm}_{n,j}\right)_{L^{2}}\\
&=\mp \alpha_{n,j} a_{n,j}^{\pm}+
\sum_{n'\ne n}\left(\varepsilon,f'(Q_{n'})Z^{\pm}_{n,j}\right)_{L^{2}},
\end{aligned}
\end{equation*}
where 
\begin{equation*}
Z^{\pm}_{n,j}=\pm\alpha_{n,j}\left(Y_{(n,j),\ell_{n}}e^{\mp\frac{\ell_{n}\lambda_{n,j}}{\sqrt{1-\ell^{2}_{n}}}x_{1}}\right)\left(\cdot-\bell_{n}t\right).
\end{equation*}
By Sobolev embedding theorem and (i) of Lemma~\ref{le:tech},
\begin{equation*}
\sum_{n'\ne n}\left|\left(\varepsilon,f'(Q_{n'})Z^{\pm}_{n,j}\right)_{L^{2}}\right|
\lesssim \sum_{n'\ne n}\|\varepsilon\|_{L^{\frac{10}{3}}}\left\|f'(Q_{n'})Z^{\pm}_{n,j}\right\|_{L^{\frac{10}{7}}}
\lesssim \|\vec{\varepsilon}\|_{\E}^{2}+t^{-4}.
\end{equation*}
Note that, for all $(n,k)\in I^{0}$ and $(n,j)\in I$, we have
\begin{equation*}
\left(\vec{\Psi}_{n,k},\mathcal{H}_{n}\vec{Y}_{n,j}^{\pm}\right)_{L^{2}}
=\left(\mathcal{H}_{n}\vec{\Psi}_{n,k},\vec{Y}_{n,j}^{\pm}\right)_{L^{2}}=0.
\end{equation*}
Therefore, from (i) of Lemma~\ref{le:tech},~\eqref{estb} and concerning the term with $\vec{\rm{Mod}}$,
\begin{equation*}
\left(\vec{{\rm{Mod}}},\vec{Z}_{n,j}^{\pm}\right)_{L^{2}}
=-\sum_{n'\ne n}\sum_{k=1}^{K_{n'}}\dot{b}_{n',k}
\left(\vec{\Psi}_{n',k},\vec{Z}_{n,j}^{\pm}\right)_{L^{2}}=
O\bigg(\|\vec{\varepsilon}\|_{\E}^{2}+\sum_{(n,k)\in I^{0}}|b_{n,k}|^{2}+t^{-4}\bigg).
\end{equation*}
Next, from~\eqref{est:G} and the Cauchy Schwarz inequality,
\begin{equation*}
\left|\left(\vec{G},\vec{Z}_{n,j}^{\pm}\right)_{L^{2}}\right|\lesssim \|G\|_{L^{2}}\lesssim 
\sum_{(n,k)\in I^{0}}|b_{n,k}|^{2}+t^{-4}.
\end{equation*}
Last, from~\eqref{Taylor3},~\eqref{Taylor2},~\eqref{Taylor1}, (i) of Lemma~\ref{le:tech} and decay properties of ${Z}_{n,j}^{\pm}$,
\begin{equation*}
\left|\left(\vec{R}_{1},\vec{Z}_{n,j}^{\pm}\right)_{L^{2}}\right|\lesssim
 \left|\left(\big(|U|^{\frac{1}{3}}+|V|^{\frac{1}{3}}\big)|\varepsilon|^{2}+|\varepsilon|^{\frac{7}{3}},Z_{n,j}^{\pm}\right)_{L^{2}}\right|\lesssim \|\vec{\varepsilon}\|_{\E}^{2},
\end{equation*}
\begin{equation*}
\left|\left(\vec{R}_{2},\vec{Z}_{n,j}^{\pm}\right)_{L^{2}}\right|\lesssim 
\|\varepsilon\|_{L^{\frac{10}{3}}}\|f'(U+V)-f'(U)\|_{L^{2}}\lesssim \|\vec{\varepsilon}\|_{\E}^{2}+\sum_{(n,k)\in I^{0}}|b_{n,k}|^{2},
\end{equation*}
\begin{equation*}
\left|\left(\vec{R}_{3},\vec{Z}_{n,j}^{\pm}\right)_{L^{2}}\right| \lesssim 
\|\varepsilon\|_{L^{\frac{10}{3}}}\bigg\|\sum_{n'\ne n}\left(|Q_{n'}||Q_{n}|^{\frac{1}{3}}+|Q_{n'}|^{\frac{4}{3}}\right)Z_{n,j}^{\pm}\bigg\|_{L^{\frac{10}{7}}}
\lesssim \|\vec{\varepsilon}\|_{\E}^{2}+t^{-4}.
\end{equation*}
Gathering above estimates and proceeding similarly for all $(n,j)\in I$, we obtain~\eqref{equa+-}.
\end{proof}

\section{Proof of Theorem \ref{main:thm}}\label{S:4}
In this section, we prove the existence of a solution $\vec{u}(t)$ of~\eqref{wave} satisfying~\eqref{thm:est} in
Theorem~\ref{main:thm}. We argue by compactness and obtain $\vec{u}(t)$ as the limit of suitable approximate multi-solitons $\vec{u}_m(t)$.

We start with a technical lemma which constructs well-prepared initial data at $t=T\gg 1$ with sufficient freedom related to unstable directions.
\begin{lemma}\label{le:ini}
	Let $T\gg 1$ and $C\gg1$. For any $\boldsymbol{a}^{}=(a^{}_{n,j})_{(n,j)\in I}\in \R^{|I|}$, there exists 
	\begin{equation*}
	A=(\tilde{a}_{n,j})_{(n,j)\in I}\in \R^{|I|},\quad 
	B=(\tilde{b}_{n,k})_{(n,k)\in I^{0}}\in\R^{|I_{0}|},
	\end{equation*}
	satisfying 
	\begin{equation}\label{est:ab}
	\sum_{(n,j)\in I}|\tilde{a}_{n,j}|+\sum_{(n,k)\in I}|\tilde{b}_{n,k}|
	\le C\sum_{(n,j)\in I}|a_{n,j}|,
	\end{equation}
	such that the function $\vec{\varepsilon}(T)$ defined by 
	\begin{equation}\label{def:eTm}
	\vec{\varepsilon}(T)=\sum_{(n,j)\in I}\tilde{a}_{n,j}(\boldsymbol{a})\vec{Z}^{+}_{n,j}(T)
	 +\sum_{(n,k)\in I^{0}}\tilde{b}_{n,k}(\boldsymbol{a})\vec{\Psi}_{n,k}(T),
	\end{equation}
	satisfies for all $(n,k)\in I^{0}$, $(n,j)\in I$,
	\begin{equation}\label{cond}
	\left(\vec{\varepsilon}(T),\vec{\Psi}_{n,k}(T)\right)_{\E}=0,\quad  \left(\vec{\varepsilon}(T),\vec{Z}^{+}_{n,j}(T)\right)_{L^{2}}=a_{n,j}.
	\end{equation}
	Moreover, the initial data defined by $\vec{u}(T)=\sum_{n=1}^{N}\vec{Q}_{n}(T)+\vec{\varepsilon}(T)$ is modulated in the sense of Proposition~\ref{le:deco} with $b_{n,k}(T)=0$ for all $(n,k)\in I^{0}$ and  $a_{n,j}^{+}(T)=a_{n,j}$ for all $(n,j)\in I$.
\end{lemma}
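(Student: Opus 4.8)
The plan is to recognize the statement as essentially a finite-dimensional linear-algebra fact. The function $\vec{\varepsilon}(T)$ in~\eqref{def:eTm} depends linearly on the $|I|+|I^{0}|$ coefficients $X:=\big((\tilde a_{n,j})_{(n,j)\in I},(\tilde b_{n,k})_{(n,k)\in I^{0}}\big)$, and imposing the $|I^{0}|+|I|$ conditions~\eqref{cond} amounts to solving a square linear system $\mathcal{G}(T)\,X=(\boldsymbol a,0)$, where the entries of $\mathcal{G}(T)$ are the pairings $\big(\vec{Z}^{+}_{n,j}(T),\vec{\Psi}_{n',k'}(T)\big)_{\E}$, $\big(\vec{\Psi}_{n,k}(T),\vec{\Psi}_{n',k'}(T)\big)_{\E}$, $\big(\vec{Z}^{+}_{n,j}(T),\vec{Z}^{+}_{n',j'}(T)\big)_{L^{2}}$ and $\big(\vec{\Psi}_{n,k}(T),\vec{Z}^{+}_{n',j'}(T)\big)_{L^{2}}$; all of these are finite by the exponential decay of $\vec{Z}^{\pm}_{n,j}$ (inherited from the $Y$'s via~\eqref{HY}) and the $\langle x\rangle^{-3-|\alpha|}$ decay of $\Psi_{k}$ and its derivatives from Lemma~\ref{le:Q}. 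I would then prove that $\mathcal{G}(T)$ is invertible with $\|\mathcal{G}(T)^{-1}\|\lesssim1$ uniformly for $T\ge T_{0}$; setting $X=\mathcal{G}(T)^{-1}(\boldsymbol a,0)$ gives~\eqref{cond} by construction and $|X|\lesssim|\boldsymbol a|$, which is~\eqref{est:ab} once $C$ exceeds the implied universal constant, and the resulting $\vec{\varepsilon}(T)$ lies in $\dot H^{1}\times L^{2}$.

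For the uniform invertibility, first note that $q_{n}$ is centered near $\bell_{n}T$ and the speeds are pairwise distinct, so two profiles with $n\ne n'$ are separated by a distance $\gtrsim|\ell_{n}-\ell_{n'}|T$; hence by Lemma~\ref{le:tech}(ii) — and exponential decay for the pairings involving a $\vec{Z}$ — all cross entries with $n\ne n'$ are $O(T^{-2})$. Since the pairings are translation invariant, the $n=n'$ entries do not depend on $T$, so $\mathcal{G}(T)=\mathcal{G}_{0}+O(T^{-2})$ with $\mathcal{G}_{0}=\mathrm{diag}\big(\mathcal{G}_{0}^{(1)},\dots,\mathcal{G}_{0}^{(N)}\big)$ block-diagonal in $n$. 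Ordering each block with rows tested against $\vec{\Psi}$ then $\vec{Z}^{+}$, and columns indexed by $\tilde a$ then $\tilde b$, I would use two cancellations: since $\mathcal{H}_{n}\vec{\Psi}_{n,k}=0$ (established in the proof of Proposition~\ref{procoer}) and $\mathcal{H}_{n}$ is $L^{2}$-self-adjoint, $\big(\vec{\Psi}_{n,k},\vec{Z}^{+}_{n,j}\big)_{L^{2}}=\big(\mathcal{H}_{n}\vec{\Psi}_{n,k},\vec{Y}^{+}_{n,j}\big)_{L^{2}}=0$ (as already used in the proof of Proposition~\ref{le:deco}(iv)); and by~\eqref{HY}, $\vec{Z}^{+}_{n,j}=-\alpha_{n,j}\mathrm{J}\vec{Y}^{+}_{n,j}$, hence $\big(\vec{Z}^{+}_{n,j},\vec{Z}^{+}_{n,j'}\big)_{L^{2}}=\alpha_{n,j}\alpha_{n,j'}\big(\vec{Y}^{+}_{n,j},\vec{Y}^{+}_{n,j'}\big)_{L^{2}}$. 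This yields
\begin{equation*}
\mathcal{G}_{0}^{(n)}=\left(\begin{array}{cc} P_{n} & \mathcal{M}_{n}\\ G_{n} & 0\end{array}\right),
\end{equation*}
where $\mathcal{M}_{n}=\big((\vec{\Psi}_{n,k},\vec{\Psi}_{n,k'})_{\E}\big)_{k,k'}$ is the invertible Gram matrix of Proposition~\ref{le:deco}(i), and $G_{n}=D_{n}\widetilde G_{n}D_{n}$ with $D_{n}=\mathrm{diag}(\alpha_{n,j})$ and $\widetilde G_{n}=\big((\vec{Y}^{+}_{n,j},\vec{Y}^{+}_{n,j'})_{L^{2}}\big)_{j,j'}$.

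It then remains to check that $(\vec{Y}^{+}_{n,j})_{j}$ is linearly independent: its first components are $Y_{(n,j)}(x_{\ell_{n}})\,e^{-\frac{\ell_{n}\lambda_{n,j}}{\sqrt{1-\ell_{n}^{2}}}x_{1}}$, and grouping a vanishing linear combination according to the values of $\lambda_{n,j}$ forces each group — a combination of the linearly independent eigenfunctions $Y_{(n,j)}$ — to vanish. Hence $\widetilde G_{n}$ is positive definite and, since $\alpha_{n,j}>0$, $G_{n}$ is invertible; reading off the kernel of $\mathcal{G}_{0}^{(n)}$ ($G_{n}\tilde a=0$ forces $\tilde a=0$, then $\mathcal{M}_{n}\tilde b=0$ forces $\tilde b=0$) shows $\mathcal{G}_{0}^{(n)}$ is invertible. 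Thus $\mathcal{G}_{0}$ is invertible with $T$-independent inverse, and a Neumann series argument makes $\mathcal{G}(T)$ invertible with $\|\mathcal{G}(T)^{-1}\|\lesssim1$ for $T\ge T_{0}$ large.

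Finally, to recover the last assertion: with $X=\mathcal{G}(T)^{-1}(\boldsymbol a,0)$ we have $\|\vec{\varepsilon}(T)\|_{\E}\lesssim|X|\lesssim|\boldsymbol a|$, so for $|\boldsymbol a|$ small enough (the regime in which the lemma is used) $\|\vec{u}(T)-\sum_{n}\vec{Q}_{n}(T)\|_{\E}=\|\vec{\varepsilon}(T)\|_{\E}\le\delta_{0}$ and Proposition~\ref{le:deco} applies to $\vec{u}(T)$. Since $\vec{u}(T)-\sum_{n}\vec{Q}_{n}(T)-\sum_{(n,k)\in I^{0}}0\cdot\vec{\Psi}_{n,k}(T)=\vec{\varepsilon}(T)$ already satisfies $(\vec{\varepsilon}(T),\vec{\Psi}_{n,k}(T))_{\E}=0$, uniqueness of that decomposition (invertibility of the Gram matrix $\mathcal{M}$ in the proof of Proposition~\ref{le:deco}(i)) forces $b_{n,k}(T)=0$, while $a^{+}_{n,j}(T)=(\vec{\varepsilon}(T),\vec{Z}^{+}_{n,j}(T))_{L^{2}}=a_{n,j}$ by~\eqref{cond}. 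The only genuinely non-formal point is the invertibility of the single-soliton blocks $\mathcal{G}_{0}^{(n)}$, i.e.\ the two cancellation identities and the linear independence of $(\vec{Y}^{+}_{n,j})_{j}$; everything else is a perturbation off a block-diagonal matrix together with uniqueness of the modulation decomposition.
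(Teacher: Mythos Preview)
Your proposal is correct and follows essentially the same approach as the paper: both recognize~\eqref{cond} as a square linear system in $(\tilde a,\tilde b)$, use the cancellation $(\vec{\Psi}_{n,k},\vec{Z}^{+}_{n,j})_{L^{2}}=(\mathcal{H}_{n}\vec{\Psi}_{n,k},\vec{Y}^{+}_{n,j})_{L^{2}}=0$ to make the single-soliton blocks block-triangular, and observe that the cross-soliton entries are $O(T^{-1})$ (you get $O(T^{-2})$, which is also fine), so that the full matrix is an invertible perturbation of a block-diagonal one. Your write-up is in fact more careful than the paper's in two places: you spell out why the Gram matrix $G_{n}$ of the $(\vec{Z}^{+}_{n,j})_{j}$ is invertible (via linear independence of the $(\vec{Y}^{+}_{n,j})_{j}$, which the paper simply asserts), and you justify the ``Moreover'' clause by invoking the uniqueness of the modulation decomposition to force $b_{n,k}(T)=0$.
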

\begin{proof}
	Our goal is to solve for $A=(\tilde{a}_{n,j})_{(n,j)\in I}$ and $B=(\tilde{b}_{n,k})_{(n,k)\in I^{0}}$ in terms of $\boldsymbol{a}=(a_{n,j})_{(n,j)\in I}$. From the relations, for all $n=1,\ldots,N$,
	\begin{equation*}
	\left(\vec{\Psi}_{n,k},\vec{Z}_{n,j}^{+}\right)_{L^{2}}=
	\left(\mathcal{H}_{n}\vec{\Psi}_{n,k},\vec{Y}_{n,j}^{+}\right)_{L^{2}}=0
	\end{equation*}
	and for all $n\ne n'$,
	\begin{equation*}
	\left(\vec{\Psi}_{n,k}(T),\vec{\Psi}_{n',k'}(T)\right)_{\E}=O(T^{-1}),\quad 
	\left(\vec{\Psi}_{n,k}(T),\vec{Z}_{n',j}^{+}(T)\right)_{\E}=O(T^{-1}),
	\end{equation*}
	\begin{equation*}
	\left(\vec{Z}^{+}_{n,j}(T),\vec{\Psi}_{n',k}^{+}(T)\right)_{L^{2}}=O(T^{-1}),\quad 
	\left(\vec{Z}_{n,j}^{+}(T),\vec{Z}_{n',j}^{+}(T)\right)_{L^{2}}=O(T^{-1}),
	\end{equation*}
the conditions in~\eqref{cond} are equivalent to a linear relation
	between $A$ and $B$ of the following form, for all $(n,k)\in I^{0}$ and $(n,j)\in I$,
	\begin{equation*}
	\begin{aligned}
	&\sum_{j=1}^{J_{n}}\left(\vec{\Psi}_{n,k}(T),\vec{Z}^{+}_{n,j}(T)\right)_{\E}\tilde{a}_{n,j}
	+\sum_{k'=1}^{K_{n}}\left(\vec{\Psi}_{n,k}(T),\vec{\Psi}_{n,k'}(T)\right)_{\E}\tilde{b}_{n,k'}\\
	=&O\bigg(\sum_{n'\ne n}\bigg(\sum_{j'=1}^{J_{n'}}|\tilde{a}_{n',j'}|+\sum_{k'=1}^{K_{n'}}|\tilde{b}_{n',k'}|\bigg)T^{-1}\bigg),
	\end{aligned}
	\end{equation*}
	\begin{equation*}
	\sum_{j'=1}^{J_{n}}\left(\vec{Z}_{n,j'}^{+},\vec{Z}_{n,j}^{+}\right)_{L^{2}}\tilde{a}_{n,j'}=a_{n,j}
	+O\bigg(\sum_{n'\ne n}\bigg(\sum_{j'=1}^{J_{n'}}|\tilde{a}_{n',j'}|+\sum_{k'=1}^{K_{n'}}|\tilde{b}_{n',k'}|\bigg)T^{-1}\bigg).
	\end{equation*}
	 Since the families $\left(\vec{\Psi}_{n,k}\right)_{(n,k)\in I^{0}}$ and $\left(\vec{Z}^{+}_{n,j}\right)_{(n,j)\in I}$ are linear independent,  the above linear system is
	 invertible for $T$ large enough. We obtain the existence and desired properties of $A=(\tilde{a}_{n,j})_{(n,j)\in I}$ and
	 $B=\left(\tilde{b}_{n,k}\right)_{(n,k)\in I^{0}}$ for $T$ large enough.
\end{proof}

To obtain weak convergence on any finite time interval, not just at initial time, we recall a proposition from~\cite{JJ,JeMa} which associates with weak continuity of the flow.

\begin{proposition}\label{prop:weak}
	There exists a constant $\epsilon>0$ such that the following holds.
	Let ${\rm{K}}\subset \dot{H}^{1}\times L^{2}$ be a compact set and let $\vec{u}_{m}:[T_{1},T_{2}]\to \dot{H}^{1}\times L^{2}$ be a sequence of solutions of~\eqref{wave} such that 
	\begin{equation*}
	{\rm{dist}}\left(\vec{u}_{m}(t),{\rm{K}}\right)\le \epsilon,\quad \mbox{for all}\ m\in \mathbb{N}^{+}\ \mbox{and}\ t\in [T_{1},T_{2}].
	\end{equation*}
Suppose that $\vec{u}_{m}(T_{1})\rightharpoonup \vec{u}_{0}$ weakly in $\dot{H}^{1}\times L^{2}$. Then the solution $\vec{u}(t)$ of~\eqref{wave} with the initial data $\vec{u}(T_{1})=\vec{u}_{0}$ is defined for $t\in [T_{1},T_{2}]$ and 
\begin{equation*}
\vec{u}_{m}(T_{1})\rightharpoonup \vec{u}(t),\quad \mbox{weakly in}\ \dot{H}^{1}\times L^{2}\ \mbox{for all}\ t\in [T_{1},T_{2}].
\end{equation*}
\end{proposition}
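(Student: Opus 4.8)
The plan is to combine a uniform space--time bound for the sequence $(\vec u_m)$, coming from the compactness of ${\rm K}$, with a weak-compactness passage to the limit in the Duhamel formula. First, since ${\rm K}$ is compact it is bounded, so the hypothesis $\mathrm{dist}(\vec u_m(t),{\rm K})\le\epsilon$ immediately yields a uniform energy bound $M:=\sup_{m,\,t\in[T_1,T_2]}\|\vec u_m(t)\|_{\dot{H}^{1}\times L^{2}}<\infty$. The key point, and the one I expect to require the most care, is to upgrade this to a \emph{uniform Strichartz bound} on the whole interval. By the local Cauchy theory of~\cite{KM}, for each $\vec\kappa\in{\rm K}$ there are $\delta(\vec\kappa)>0$, $\rho(\vec\kappa)>0$ and $C(\vec\kappa)>0$ such that any energy solution of~\eqref{wave} with data in $B(\vec\kappa,\rho(\vec\kappa))$ is defined on an interval of length $\delta(\vec\kappa)$ with $L^{4}_{t,x}$-norm at most $C(\vec\kappa)$ (here $4=\tfrac{2(d+1)}{d-2}$ is the scaling-critical space-time exponent for $d=5$). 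Covering ${\rm K}$ by finitely many balls $B(\vec\kappa_i,\rho(\vec\kappa_i)/2)$ and fixing $\epsilon\le\tfrac12\min_i\rho(\vec\kappa_i)$, $\delta_0:=\min_i\delta(\vec\kappa_i)$, one applies this at each time $t_0\in[T_1,T_2]$ to $\vec u_m(t_0)$ and, by uniqueness of $\vec u_m$, obtains after iterating over $\lceil(T_2-T_1)/\delta_0\rceil$ subintervals a bound
\begin{equation*}
\|u_m\|_{L^{4}([T_1,T_2]\times\RR)}\le S_0,\qquad\text{for every }m,
\end{equation*}
with $S_0$ independent of $m$. This is precisely the step where the compactness of ${\rm K}$ and the smallness of $\epsilon$ enter, and it is exactly what will prevent the limiting solution from blowing up before $T_2$.

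Next, from $\|\vec u_m(t)\|_{\dot{H}^{1}\times L^{2}}\le M$ the first component $u_m$ is bounded in $L^\infty([T_1,T_2];\dot{H}^{1}(\RR))$ and the second component $\partial_t u_m$ is bounded in $L^\infty([T_1,T_2];L^{2}(\RR))$. Hence, on each ball $B_R=B(0,R)$, $\{u_m\}$ is bounded in $L^\infty_tH^1(B_R)$ and equi-Lipschitz in $t$ valued in $L^{2}(B_R)$; by the compact embedding $H^1(B_R)\hookrightarrow L^{2}(B_R)$, the Arzel\`a--Ascoli theorem and a diagonal extraction in $R$, some subsequence satisfies $u_{m_k}\to\tilde u$ in $C([T_1,T_2];L^{2}_{\mathrm{loc}}(\RR))$, and therefore, interpolating with the uniform $L^\infty_tL^{10/3}_x$ bound, $u_{m_k}\to\tilde u$ in $C([T_1,T_2];L^{p}_{\mathrm{loc}})$ for every $p<\tfrac{10}{3}$. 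Since $f(u)=|u|^{4/3}u$ satisfies $|f(a)-f(b)|\lesssim(|a|^{4/3}+|b|^{4/3})|a-b|$, it follows by H\"older's inequality that $f(u_{m_k})\to f(\tilde u)$ in $L^{1}([T_1,T_2];L^{r}(B_R))$ for some $r$ slightly below $\tfrac{10}{7}$, for every $R$.

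With these convergences in hand, I would pass to the limit, tested against $\vec\phi\in C_{0}^{\infty}(\RR)\times C_{0}^{\infty}(\RR)$, in both components of the Duhamel identity
\begin{equation*}
\vec u_{m_k}(t)=S(t-T_1)\vec u_{m_k}(T_1)+\int_{T_1}^{t}S(t-s)\begin{pmatrix}0\\ f(u_{m_k}(s))\end{pmatrix}\mathrm ds,
\end{equation*}
where $S$ is the free wave propagator. The linear term converges weakly in $\dot{H}^{1}\times L^{2}$ for each $t$ because $\vec u_m(T_1)\rightharpoonup\vec u_0$ and $S(t-T_1)$ is a bounded operator; for the Duhamel term one writes the pairing as $\int_{T_1}^t(f(u_{m_k}(s)),\psi_{t,s})_{L^{2}}\,\mathrm ds$ with $\psi_{t,s}$ the relevant component of the adjoint propagator applied to $\vec\phi$, and uses finite speed of propagation to see that $\{\psi_{t,s}:s\in[T_1,t]\}$ stays in a fixed ball and a fixed bounded subset of every $L^{r'}$, so the strong convergence of $f(u_{m_k})$ passes to the limit. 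This shows that $\tilde{\vec u}:=(\tilde u,\partial_t\tilde u)$ solves~\eqref{wave} on $[T_1,T_2]$ with $\tilde{\vec u}(T_1)=\vec u_0$; by lower semicontinuity $\|\tilde u\|_{L^{4}([T_1,T_2]\times\RR)}\le S_0<\infty$, whence $\tilde{\vec u}\in C([T_1,T_2];\dot{H}^{1}\times L^{2})$ by Strichartz estimates, i.e.\ $\tilde{\vec u}$ lies in the uniqueness class.

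Finally, by uniqueness of the solution of~\eqref{wave} in $C_t(\dot{H}^{1}\times L^{2})\cap L^{4}_{t,x}$, $\tilde{\vec u}$ coincides with the maximal solution $\vec u$ emanating from $\vec u_0$; in particular $\vec u$ is defined on all of $[T_1,T_2]$. Since this limit does not depend on the extracted subsequence, the standard subsequence argument applies: every subsequence of $(\vec u_m)$ has a further subsequence for which, at each fixed $t$, $\vec u_m(t)$ converges strongly in $L^{2}_{\mathrm{loc}}$ while remaining bounded in $\dot{H}^{1}\times L^{2}$, hence converges weakly to $\vec u(t)$; therefore the whole sequence satisfies $\vec u_m(t)\rightharpoonup\vec u(t)$ in $\dot{H}^{1}\times L^{2}$ for every $t\in[T_1,T_2]$, which is the claim.
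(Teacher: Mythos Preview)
Your argument is correct and gives a self-contained proof, but it differs from the route the paper indicates. The paper does not prove the proposition directly; it refers to the profile decomposition of Bahouri--G\'erard type as stated in~\cite[Proposition~2.8]{DKMAWAVE}, together with the detailed treatments in~\cite[Appendix~A.2]{JJ} and~\cite[Appendix]{JeMa}. In that approach one decomposes the bounded sequence $\vec u_m(T_1)$ into a sum of profiles plus a remainder with small Strichartz norm, and the compactness of ${\rm K}$ is used to rule out nontrivial profiles other than the weak limit; the nonlinear evolution then passes to the limit via the long-time perturbation (stability) theory for~\eqref{wave}. Your proof instead exploits the compactness of ${\rm K}$ at the very first step, through the local Cauchy theory and a finite covering, to obtain a \emph{uniform} $L^4_{t,x}$ bound on $[T_1,T_2]$; after that the passage to the limit is by classical compactness (Rellich plus Arzel\`a--Ascoli) and finite speed of propagation, with no need for profile decomposition or stability lemmas. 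Both arguments hinge on the same two ingredients---compactness of ${\rm K}$ and smallness of $\epsilon$---but yours is more elementary and avoids the concentration-compactness machinery, at the price of being tied to the compactness hypothesis on ${\rm K}$ (profile decomposition would also handle settings where only boundedness is available). One minor comment: when you conclude that $\tilde{\vec u}\in C([T_1,T_2];\dot H^1\times L^2)$, it is cleanest to observe that $\tilde u\in L^4_{t,x}$ places $f(\tilde u)$ in a dual Strichartz space on $[T_1,T_2]$, so the Duhamel identity you have established in the weak sense actually holds as an identity in $C_t(\dot H^1\times L^2)$; this is exactly the uniqueness class of~\cite{KM}, and your final subsequence argument then closes as written.
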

\begin{proof}
	The proof relies on a standard argument based on the result of profile decomposition stated in~\cite[Proposition 2.8]{DKMAWAVE}. See more details in~\cite[Appendix A.2]{JJ} and~\cite[Appendix]{JeMa}.
	\end{proof}

Let $T_{m}=m$ for all $m\in \mathbb{N}^{+}$. For $\boldsymbol{a}_{m}=(a^{m}_{n,j})_{(n,j)\in I}\in \R^{|I|}$ small to be determined later, we consider 
the solution $\vec{u}_{m}\in C\big(T^{m}_{\rm{max}}; \dot{H}^{1}\times L^{2}\big)$ with the initial data $\vec{u}_{m}(T_{m})$ given by Lemma~\ref{le:ini}. Since $\vec{u}_{m}(T_{m})\in X$, by persistence of regularity (see for instance~\cite[Appendix B]{JJwave5}), we have $\vec{u}_{m}\in C\big(T^{m}_{\rm{max}}; X\big)$. Such regularity will allow energy computations without density argument, see \S\ref{sec:ener}.

\smallskip

The following Proposition is the main part of the proof of Theorem~\ref{main:thm}.
\begin{proposition}[Uniform estimates]\label{main:pro}
	Under the assumptions of Theorem~\ref{main:thm}, there exist $m_{0}\in \mathbb{N}^{+}$ and $T_{0}\gg 1$ 
	such that, for any $m\ge m_{0}$, there exist $\boldsymbol{a}_{m}=(a^{m}_{n,j})_{(n,j)\in I}\in \R^{|I|}$ such that the solution $\vec{u}_{m}$ of~\eqref{wave} with initial data $\vec{u}_{m}(T_{m})$ given by Lemma~\ref{le:ini} is well-defined in $\dot{H}^{1}\times L^{2}$ on the time interval $[T_{0},T_{m}]$
	and satisfies
	\begin{equation}\label{uniest}
	\forall t\in[T_{0},T_{m}],\quad \bigg\|\vec{u}_{m}(t)-\sum_{n=1}^{N}\vec{Q}_{n}(t)\bigg\|_{\E}\le t^{-\frac{8}{5}}.
	\end{equation}
\end{proposition}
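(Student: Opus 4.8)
The plan is to run a bootstrap (continuity/topological shooting) argument. First, I would fix $m$ large and define, for each choice of small parameter $\boldsymbol{a}_m$, the maximal time $T^\star=T^\star(\boldsymbol{a}_m)\in[T_0,T_m]$ down to which the solution $\vec u_m$ (with initial data at $T_m$ from Lemma~\ref{le:ini}) stays in the modulation regime~\eqref{estQ0} and satisfies the bootstrap bounds. The natural bootstrap package consists of three estimates on $[T^\star,T_m]$: the stable/energy bound $\|\vec\varepsilon(t)\|_\E\lesssim t^{-2}$ (say $\le t^{-9/5}$ as a strict version), the scaling-type bound $\sum_{(n,k)\in I^0}|b_{n,k}(t)|\lesssim t^{-2}$, and the unstable bound $\sum_{(n,j)\in I}|a^{+}_{n,j}(t)|\le t^{-2}$ (with the last inequality being the one we use topological degree on). Note that $\|\vec u_m(t)-\sum_n\vec Q_n(t)\|_\E\lesssim \|\vec\varepsilon(t)\|_\E+\sum|b_{n,k}(t)|+\sum|a^{+}_{n,j}(t)|$, so all three together give~\eqref{uniest}.

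Second, I would close the $\vec\varepsilon$ estimate by an energy method. Following Martel--Merle~\cite{MMwave1} and C\^ote--Martel~\cite{CMakg}, the functional is (roughly)
\begin{equation*}
\mathcal{F}(t)=\frac12\sum_{n=1}^{N}\|\vec\varepsilon(t)\|_{\mathcal{H}_{n}}^{2}-\int_{\R^5}\Big(F(U+V+\varepsilon)-F(U+V)-f(U+V)\varepsilon-\tfrac12 f'(U+V)\varepsilon^2\Big)\d x,
\end{equation*}
possibly with a localized momentum-type correction $\sum_n \ell_n \int \partial_{x_1}\varepsilon\,\eta\,\chi_n$ to handle the moving frames. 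Differentiating $\mathcal{F}$ in time and using the equation~\eqref{equpe} for $\vec\varepsilon$, the modulation bounds~\eqref{estb}, the interaction estimates~\eqref{est:G}, Lemma~\ref{le:tech}, and the Taylor estimates, one gets $|\tfrac{d}{dt}\mathcal{F}|\lesssim \|\vec\varepsilon\|_\E^2/t + \|\vec\varepsilon\|_\E(\sum|b_{n,k}|^2+t^{-4})+\cdots$. The coercivity Proposition~\ref{procoer} (applied soliton by soliton, using the orthogonality conditions~\eqref{orth} and the near-orthogonality of the different $\vec Z^{\pm}_{n,j}$ across $n$) gives $\mathcal{F}(t)\gtrsim \|\vec\varepsilon(t)\|_\E^2-C\big(\sum_{(n,j)}|a^\pm_{n,j}(t)|^2+\text{small}\big)$; integrating the differential inequality backward from $T_m$ (where $\vec\varepsilon(T_m)$ is $O(|\boldsymbol{a}_m|)$ by Lemma~\ref{le:ini}) and using Gronwall yields $\|\vec\varepsilon(t)\|_\E\lesssim t^{-2}$ on $[T^\star,T_m]$, strictly better than the bootstrap assumption. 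The $b_{n,k}$ bound follows by integrating~\eqref{estb} backward from $b_{n,k}(T_m)=0$, again gaining.

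Third, for the unstable directions: from~\eqref{equa+-}, once $\|\vec\varepsilon\|_\E^2+\sum|b_{n,k}|^2+t^{-4}\lesssim t^{-4}$ is established, each $a^{+}_{n,j}$ satisfies $|\tfrac{d}{dt}a^{+}_{n,j}+\alpha_{n,j}a^{+}_{n,j}|\lesssim t^{-4}$, i.e. the only truly unstable modes are the $a^{+}$'s (the $a^{-}$'s are controlled since $-\alpha<0$ gives decay in the backward direction and they can be bounded directly; more precisely one works with $\vec Z^{+}$ only as the ``output'' modes in the shooting). If at $t=T^\star>T_0$ one had equality $\sum|a^{+}_{n,j}(T^\star)|=(T^\star)^{-2}$ while all other bootstrap inequalities were strict, one shows the vector $\big(a^{+}_{n,j}(T^\star)\big)$ is transversally outgoing on the sphere of radius $(T^\star)^{-2}$ in $\R^{|I|}$; a standard Brouwer-type argument on the map $\boldsymbol{a}_m\mapsto \big((T^\star)^2 a^{+}_{n,j}(T^\star)\big)$ then produces a choice of $\boldsymbol{a}_m$ for which $T^\star=T_0$, i.e. the bound holds on all of $[T_0,T_m]$.

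The main obstacle I expect is the energy step: getting the coercivity of $\mathcal{F}$ right in the presence of $N$ solitons traveling at distinct collinear speeds, because the operators $\mathcal{H}_n$ live in different Lorentz frames and the cross terms between distinct solitons must be controlled by the interaction rate $t^{-4}$ from Lemma~\ref{le:tech} --- this is exactly where the decaying-state hypothesis is used, and why the ground state (interaction $t^{-3}$) is excluded. The localized momentum correction and the careful splitting of the time derivative of $\mathcal{F}$ into a ``good'' coercive part plus error terms all bounded by $t^{-4}$ or $t^{-1}\|\vec\varepsilon\|_\E^2$ is the technical heart; the final integration and the topological shooting are then routine, following~\cite{CMakg,MMwave1}.
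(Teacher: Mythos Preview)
Your overall architecture---bootstrap, energy functional with a localized momentum correction, topological shooting on the $a^{+}_{n,j}$---is the same as the paper's. But there is a genuine gap in the energy step, and your proposed exponents reflect it.

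The obstruction is the quadratic-in-$b$ source term. From the modulation equation~\eqref{estb} you only get $|\dot b_{n,k}|\lesssim\|\vec\varepsilon\|_\E+|b|^{2}+t^{-4}$, so closing $|b|\le t^{-q}$ by integration forces $\|\vec\varepsilon\|_\E\le t^{-p}$ with $p>q+1$. On the other hand, in $\tfrac{\d}{\d t}$ of your functional the term $\int\eta\,G\,\d x$ contains $\int\eta\,G_{3}\,\d x=O(\|\vec\varepsilon\|_\E\,|b|^{2})=O(t^{-p-2q})$; to close $\|\vec\varepsilon\|_\E\le t^{-p}$ via the virial gain $\sim t^{-1}\|\vec\varepsilon\|_\E^{2}$ one needs $p+2q>2p+1$, i.e.\ $p<2q-1$. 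These two constraints give $q>2$, whereupon the $t^{-4}$ interaction error dominates and forces $p<3<q+1$, a contradiction. In particular your choice $p=q=2$ does not even close the $b$ estimate.

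The paper fixes this by subtracting a further correction $\mathcal{G}(t)=2\int\varepsilon\,G_{3}\,\d x$ from the energy functional, so that the bad $\int\eta\,G_{3}$ term cancels in $\tfrac{\d}{\d t}\mathcal K$. The price is a new term $-\sum_{(n,k)}\dot b_{n,k}\int\Psi_{n,k}G_{3,n}\,\d x$ in $\tfrac{\d}{\d t}\mathcal G$, of the same dangerous size $|\dot b|\,|b|^{2}$; this term is made to \emph{vanish identically} by the algebraic cancellation $\int f''(q)\Psi_{k}\Psi_{k'}\Psi_{k''}\,\d x=0$ of Lemma~\ref{le:Q}(iii), which is where the non-degeneracy assumption (through the $21$-parameter transformation group) is actually used. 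With this in place the remaining errors are $O(\|\vec\varepsilon\|_\E(|b|^{7/3}+t^{-4}))$, and the bootstrap closes with the paper's hierarchy $\|\vec\varepsilon\|_\E\le t^{-29/10}$, $\sum|b_{n,k}|\le t^{-9/5}$, $\sum|a^{+}|^{2}\le t^{-6}$, $\sum|a^{-}|^{2}\le t^{-59/10}$. Your proposal invokes neither the $\mathcal G$ correction nor Lemma~\ref{le:Q}(iii), and without them the argument cannot close.
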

\subsection{Proof of Theorem~\ref{main:thm} from Proposition~\ref{main:pro}} \label{SS:mainthm}
We follow the strategy by uniform estimates and compactness introduced in~\cite{JeMa,Ma,MMnls}. Consider the solution $\vec{u}_{m}(t)$ given by Proposition~\ref{main:pro}. On the interval $[T_{0},T_{m}]$, from~\eqref{uniest}, we know that, there exists $T_{0}\gg 1$ such that 
\begin{equation*}
\forall t\in[T_{0},T_{m}],\quad \bigg\|\vec{u}_{m}(t)-\sum_{n=1}^{N}\vec{Q}_{n}(t)\bigg\|_{\E}\le \epsilon,
\end{equation*} 
where $\epsilon>0$ is the constant of Proposition~\ref{prop:weak}.

From the uniform estimates obtained in~\eqref{uniest} at $T=T_{0}$, up to the extraction of a subsequence,
there exists $\vec{u}_{0}=(u_{0},u_{1})\in \dot{H}^{1}\times L^{2}$ such that 
\begin{equation*}
\vec{u}_{m}(T_{0})\rightharpoonup\vec{u}_{0}\quad \mbox{in}\ \dot{H}^{1}\times L^{2}-weak\quad \mbox{as}\ m\to \infty.
\end{equation*}
Fix $T>T_{0}$. From Proposition~\ref{prop:weak} applied to the compact set 
\begin{equation*}
{\rm{K}}=\left\{\sum_{n=1}^{N}\vec{Q}_{n}(t)\in \dot{H}^{1}\times L^{2}, t\in [T_{0},T]\right\},
\end{equation*}
the solution $\vec{u}(t)$ of~\eqref{wave} corresponding to $\vec{u}(T_{0})=\vec{u}_{0}$ is well-defined and it holds $\vec{u}_{m}(t)\rightharpoonup \vec{u}(t)$ weakly in $\dot{H}^{1}\times L^{2}$ on $[T_{0},T]$. By~\eqref{uniest} and the properties of weak convergence, the solution $\vec{u}$ satisfies, for all $t\in [T_{0},T]$, 
\begin{equation*}
\bigg\|\vec{u}(t)-\sum_{n=1}^{N}\vec{Q}_{n}(t)\bigg\|_{\E}\le \mathop{\underline{\lim}}_{m \to \infty} \bigg\|\vec{u}_{m}(t)-\sum_{n=1}^{N}\vec{Q}_{n}(t)\bigg\|_{\E}\le t^{-\frac{8}{5}}.
\end{equation*} 
Since $T\ge T_{0}$ is arbitrary, the solution $\vec{u}$ is defined and satisfies the conclusion of Theorem~\ref{main:thm} on $[T_{0},\infty)$.
The proof of Theorem~\ref{main:thm} from Proposition~\ref{main:pro} is complete.

\smallskip

The rest of this section is devoted to the proof of Proposition~\ref{main:pro}.
\subsection{Bootstrap setting}\label{SS:Boot}
For $0<t\le T_{m}$, as long as $\vec{u}_{m}(t)$ is well defined in $\dot{H}^{1}\times L^{2}$
and satisfies~\eqref{estQ0}, we decompose $\vec{u}_{m}(t)$ as in Proposition~\ref{le:deco}.
In particular, we denote by $\vec{\varepsilon}=(\varepsilon,\eta)$, $\bb=(b_{n,k})_{(n,k)\in I^{0}}$,
$(a_{n,j}^{\pm})_{(n,j)\in I}$ the parameters of the decomposition of $\vec{u}_{m}$.

\smallskip

To prove Proposition~\ref{main:pro}, we introduce the following bootstrap estimates:
\begin{equation}\label{Bootset}
\begin{aligned}
\|\vec{\varepsilon}(t)\|_{\E}&\le t^{-\frac{29}{10}},\quad \sum_{(n,k)\in I^{0}}|b_{n,k}(t)|\le t^{-\frac{9}{5}},\\
\sum_{(n,j)\in I}|a_{n,j}^{+}(t)|^{2}&\le t^{-6},\quad \ \sum_{(n,j)\in I}|a_{n,j}^{-}(t)|^{2}\le t^{-\frac{59}{10}}.
\end{aligned}
\end{equation}

For $\boldsymbol{a}_{m}\in B_{\R^{|I|}}\big(T_{m}^{-3}\big)$, set 
\begin{equation}\label{defT*}
T_{*}(\boldsymbol{a}_{m})=\inf \{t\in [T_{0},T_{m}];\vec{u}_{m}~\mbox{satisfies}~\eqref{estQ0}~\mbox{and}~\eqref{Bootset}~\mbox{holds on}~[t,T_{m}]\}.
\end{equation}

Note that, for the proof of Proposition~\ref{main:pro}, it suffices to prove that there exists $T_{0}\gg 1$ (independent with $m$) large enough and at least one choice of $\boldsymbol{a}_{m}\in B_{\R^{|I|}}\big(T_{m}^{-3}\big)$
such that $T_{*}(\boldsymbol{a}_{m})=T_{0}$.
\subsection{Energy functional}\label{sec:ener}
Without loss of generality
\begin{equation*}
-1<\ell_{1}<\cdots<\ell_{N}<1.
\end{equation*}
Denote
\begin{equation*}
\ell^{*}=\max_{n}(|\ell_{n}|)<1.
\end{equation*}
For
\begin{equation*}
0<\delta<\frac{1}{100}\min_{n}(\ell_{n+1}-\ell_{n})
\end{equation*}
small enough to be chosen later, we denote
\begin{equation*}
\begin{aligned}
&{\bar{\ell}}_{n}=\ell_{n}+\delta(\ell_{n+1}-\ell_{n}),\quad \mbox{for $n=1,\ldots,N-1$},\\
&\underline{\ell}_{n}=\ell_{n}-\delta(\ell_{n}-\ell_{n-1}),\quad \mbox{for $n=2,\ldots,N$}.
\end{aligned}
\end{equation*}
For $t>0$, denote
\begin{equation*}
\Omega(t)=\left(\big(\bar{\ell}_{1}t,\underline{\ell}_{2}t\big)\cup\ldots\cup\big(\bar{\ell}_{N-1}t,\underline{\ell}_{N}t\big)\right)\times \mathbb{R}^{4},
\quad \Omega^{C}(t)=\mathbb{R}^{5}\setminus \Omega(t).
\end{equation*}
We define the continuous function $\chi_{N}(t,x)=\chi_{N}(t,x_{1})$ as follow (see~\cite{MMwave1,XY} for a similar choice of cut-off function),
for $t>0$,
\begin{equation}\label{defchiN}
\left\{\begin{aligned}
    &\hbox{$\chi_N(t,x) =  \ell_1$ for $x_1\in (-\infty,  \bar{\ell}_1 t]$},\\
	& \hbox{$\chi_N(t,x) = \ell_n $ for $x_1\in [\underline{\ell}_n t, \bar{\ell}_n t]$, for $n\in \{2,\ldots,N-1\}$,}
  	\\ & \hbox{$\chi_N(t,x)= \ell_N$ for $x_1\in [\underline{\ell}_{N} t,+\infty)$},\\
    	& \chi_N(t,x) = \frac{x_1}{(1-2\delta)t}  - \frac {\delta}{1-2 \delta} (\ell_{n+1}+\ell_n) 
	\hbox{ for $x_1 \in [\bar{\ell}_{n} t,\underline{\ell}_{n+1} t ]$, $n\in \{1,\ldots,N-1\}$}.
\end{aligned}
\right.
\end{equation}
Note that 
\begin{equation}\label{derchi}\left\{\begin{aligned}
   & \partial_{x_1} \chi_N(t,x)= \frac{1}{(1-2\delta)t}  \quad \hbox{for $x\in \Omega(t)$},\\
   	& \partial_{t} \chi_N(t,x)= -\frac 1t \frac{x_1}{(1-2\delta)t} \quad \hbox{for $x\in \Omega(t)$},\\
   	& \partial_t \chi_N(t,x) =0,\quad  \nabla \chi_N(t,x)=0, \quad \hbox{for $x\in \Omega^C(t)$}.\\
\end{aligned} \right.\end{equation}

We define (see~\cite{MMwave1,MMT,XY} for similar energy functional)
\begin{equation*}
\mathcal{K}(t)=\mathcal{F}(t)-\mathcal{G}(t)=\mathcal{E}(t)+\mathcal{P}(t)-\mathcal{G}(t)
\end{equation*}
where
\begin{equation*}
\mathcal{E}(t)=\int_{\RR}\left(|\nabla\varepsilon|^{2}+\eta^{2}-2(F(U+V+\varepsilon)-F(U+V)-f(U+V)\varepsilon)\right)\d x,
\end{equation*}
\begin{equation*}
\mathcal{P}(t)=2\int_{\mathbb{R}^{5}}\big(\chi_{N}(t,x)\partial_{x_{1}}\varepsilon(t,x))\eta(t,x)\d x\quad 
\mbox{and}\quad 
\mathcal{G}(t)=2\int_{\RR}\varepsilon(t,x) G_{3}(t,x)\d x.
\end{equation*}
We start with the following technical lemma.
\begin{lemma}\label{le:tech1}
	Let $W$ be a continuous function such that
	\begin{equation}\label{decayW}
	|W(x)|\lesssim \langle x \rangle^{-(4+\alpha)}\quad \mbox{for all}\ x\in \RR,
	\end{equation}
	where $\alpha\ge 0$. For all $n=1,\ldots,N$, the following estimates hold.
	\begin{enumerate}
		\item \emph{Estimate of $L^{\frac{10}{7}}$ norm.}
		\begin{equation}\label{estL710}
		\left\|(\ell_{n}-\chi_{N})W(x-\bell_{n}t)\right\|_{L^{\frac{10}{7}}}\lesssim t^{-(\frac{1}{2}+\alpha)}.
		\end{equation}
		\item \emph{Estimate of $L^{2}$ norm.}
		\begin{equation}\label{estL2}
		\left\|(\ell_{n}-\chi_{N})W(x-\bell_{n}t)\right\|_{L^{2}}\lesssim t^{-(\frac{3}{2}+\alpha)}.
		\end{equation}
		\item \emph{Estimate of $L^{\frac{10}{3}}$ norm.}
		\begin{equation}\label{estL310}
		\left\|(\ell_{n}-\chi_{N})W(x-\bell_{n}t)\right\|_{L^{\frac{10}{3}}}\lesssim t^{-(\frac{5}{2}+\alpha)}.
		\end{equation}
	\end{enumerate}
\end{lemma}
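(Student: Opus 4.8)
The plan is to exploit that the cut-off $\chi_{N}(t,\cdot)$ is \emph{exactly equal to} $\ell_{n}$ on the spatial slab carrying the $n$-th soliton, so that $(\ell_{n}-\chi_{N})W(x-\bell_{n}t)$ is supported far from the set $\{x_{1}=\ell_{n}t\}$ where the $n$-th soliton is centered, and then to estimate the remaining weighted integral by a simple scaling.

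First I would record the two elementary facts about $\chi_{N}$ that are needed. From~\eqref{defchiN} the function $x_{1}\mapsto\chi_{N}(t,x_{1})$ takes values in $[\ell_{1},\ell_{N}]$ (it equals $\ell_{m}$ on each plateau and interpolates linearly between consecutive $\ell_{m}$'s), so $|\ell_{n}-\chi_{N}(t,x)|\le\ell_{N}-\ell_{1}\lesssim 1$ on $\RR$; moreover $\chi_{N}(t,x)=\ell_{n}$ whenever $x_{1}\in[\underline{\ell}_{n}t,\bar{\ell}_{n}t]$, with the obvious modifications $(-\infty,\bar{\ell}_{1}t]$ for $n=1$ and $[\underline{\ell}_{N}t,+\infty)$ for $n=N$. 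Setting $c_{0}=\delta\min_{m}(\ell_{m+1}-\ell_{m})>0$, leaving this plateau to the right costs $x_{1}-\ell_{n}t>(\bar{\ell}_{n}-\ell_{n})t\ge c_{0}t$ and to the left costs $\ell_{n}t-x_{1}>(\ell_{n}-\underline{\ell}_{n})t\ge c_{0}t$, so that
\[
\{x\in\RR:\chi_{N}(t,x)\neq\ell_{n}\}\subset\{x\in\RR:|x_{1}-\ell_{n}t|>c_{0}t\}.
\]

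Next I would change variables $y=x-\bell_{n}t$ (so that $y_{1}=x_{1}-\ell_{n}t$ and $y'=x'$) and combine the decay~\eqref{decayW} with the two facts above: for any of the three exponents $p\in\{\tfrac{10}{7},2,\tfrac{10}{3}\}$,
\[
\big\|(\ell_{n}-\chi_{N})W(x-\bell_{n}t)\big\|_{L^{p}}^{p}\lesssim\int_{|y_{1}|>c_{0}t}\int_{\R^{4}}\langle y\rangle^{-p(4+\alpha)}\,\d y'\,\d y_{1}.
\]
The rescaling $y=tz$ turns the domain of integration into $\{|z_{1}|>c_{0}\}$, on which $|z|\ge|z_{1}|>c_{0}$ and $\langle tz\rangle=(1+t^{2}|z|^{2})^{1/2}\ge t|z|$, hence
\[
\int_{|y_{1}|>c_{0}t}\int_{\R^{4}}\langle y\rangle^{-p(4+\alpha)}\,\d y'\,\d y_{1}\le t^{5-p(4+\alpha)}\int_{|z_{1}|>c_{0}}\int_{\R^{4}}|z|^{-p(4+\alpha)}\,\d z'\,\d z_{1}\lesssim t^{5-p(4+\alpha)},
\]
the last integral being finite because its domain avoids the origin and $p(4+\alpha)\ge\tfrac{40}{7}>5$ for each of the three values of $p$ and every $\alpha\ge0$. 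Taking $p$-th roots gives $\big\|(\ell_{n}-\chi_{N})W(x-\bell_{n}t)\big\|_{L^{p}}\lesssim t^{\frac{5}{p}-(4+\alpha)}$, which is precisely $t^{-(1/2+\alpha)}$, $t^{-(3/2+\alpha)}$, $t^{-(5/2+\alpha)}$ for $p=\tfrac{10}{7},2,\tfrac{10}{3}$ respectively; this proves~\eqref{estL710},~\eqref{estL2} and~\eqref{estL310}.

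The argument is essentially routine, and the only point that requires a little care is the integrability at the critical exponent. A cruder estimate that peels a factor $t^{-s}$ out of the weight on the support and integrates $\langle y\rangle^{-(p(4+\alpha)-s)}$ over all of $\RR$ loses an arbitrarily small power of $t$ exactly because $p(4+\alpha)$ may be close to $5$ (e.g.\ when $\alpha=0$ and $p=\tfrac{10}{7}$, where $p(4+\alpha)=\tfrac{40}{7}$): one cannot afford to decouple the $y_{1}$ and $y'$ integrations beforehand. Keeping the full radial weight $\langle y\rangle^{-p(4+\alpha)}$ intact and using the exact homogeneity of $|z|^{-p(4+\alpha)}$ after the scaling $y=tz$ is what yields the sharp exponents; everything else is the verification from~\eqref{defchiN} that $\chi_{N}(t,\cdot)$ is locally constant near each soliton and globally bounded.
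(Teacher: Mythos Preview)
Your proof is correct and follows essentially the same approach as the paper: both use that $\chi_{N}(t,\cdot)=\ell_{n}$ on a slab of width $\sim t$ around $x_{1}=\ell_{n}t$, reduce to integrating $\langle y\rangle^{-p(4+\alpha)}$ over $\{|y|\gtrsim t\}$, and extract the exponent $t^{5/p-(4+\alpha)}$. The only cosmetic difference is that the paper computes the tail integral directly in radial coordinates, whereas you obtain it by the rescaling $y=tz$; these are equivalent.
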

\begin{proof}
	From~\eqref{decayW}, the definition of $\chi_{N}$ and change of variable,
	\begin{equation*}
	\begin{aligned}
	\int_{\RR}|\ell_{n}-\chi_{N}|^{\frac{10}{7}}|W(x-\bell_{n}t)|^{\frac{10}{7}}\d x
	&\lesssim \int_{|x|\ge \delta^{2} t}\langle x\rangle^{-\frac{10}{7}(4+\alpha)}\d x\\
	&\lesssim \int_{\delta^{2} t}^{\infty}\frac{r^{4}}{(1+r^{2})^{\frac{5}{7}(4+\alpha)}}\d r
	\lesssim t^{-\frac{10}{7}(\frac{1}{2}+\alpha)},
	\end{aligned}
	\end{equation*}
	which implies~\eqref{estL710}. The proof of~\eqref{estL2} and~\eqref{estL310} follows from similar arguments and it is omitted.
\end{proof}

Under the bootstrap setting~\eqref{Bootset}, we prove the following estimates.
\begin{proposition}\label{mainpro}
There exists $0<\nu\ll 1$ such that the following hold.
\begin{enumerate}
	\item \emph{Coercivity.}
\begin{equation}\label{energycoer}
\nu \|\vec{\varepsilon}(t)\|_{\E}^{2}\le \mathcal{K}(t)+\nu^{-1}t^{-\frac{59}{10}}.
\end{equation}
 \item \emph{Time variation of $\mathcal{K}$}.
\begin{equation}\label{timevariation}
-\frac{\rm{d}}{{\rm {d}} t}(t^{2}\mathcal{K})(t)\le \nu^{-1}t^{-\frac{49}{10}}.
\end{equation}
\end{enumerate}
\end{proposition}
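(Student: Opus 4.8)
The plan is to prove the two items separately, both under the bootstrap assumptions~\eqref{Bootset}.

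\medskip

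\textbf{Proof of (i) (coercivity).} First I would expand $\mathcal K=\mathcal E+\mathcal P-\mathcal G$. By the Taylor estimate~\eqref{Taylor4} applied with $s=\varepsilon$, the nonlinear term in $\mathcal E$ is controlled by $\int\bigl((|U|^{4/3}+|V|^{4/3})\varepsilon^2+|\varepsilon|^{10/3}\bigr)\d x$, so that
\begin{equation*}
\mathcal E(t)=\int_{\RR}\bigl(|\nabla\varepsilon|^2+\eta^2-f'(U+V)\varepsilon^2\bigr)\d x+O\bigl(\|\vec\varepsilon\|_{\E}^{7/3}\bigr),
\end{equation*}
and, using~\eqref{Taylor2} and~\eqref{Taylor3} to replace $f'(U+V)$ by $\sum_n f'(Q_n)$ up to interaction errors bounded via Lemma~\ref{le:tech}~(i) by $t^{-4}\|\vec\varepsilon\|_{\E}^2\cdot(\dots)$ and the $L^{10/3}$ Sobolev bound, one reduces to the quadratic form $\sum_n\int f'(Q_n)\varepsilon^2$. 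The idea is then to localize: write $\varepsilon=\sum_n\phi_n\varepsilon$ with a partition of unity $\phi_n$ adapted to the spatial zones around each soliton (built from $\chi_N$), so that on the support of $\phi_n$ the form $(\mathcal H_n\,\cdot\,,\cdot)_{L^2}$ governs the energy-momentum combination $\mathcal E_n+2\int\chi_N\partial_{x_1}\varepsilon\,\eta$. Applying Proposition~\ref{procoer} to each localized piece $\vec v_n=\phi_n\vec\varepsilon$ (after the Lorentz-type change of variables $x\mapsto x_\ell$ that turns $\mathcal H_\ell$ into the moving frame operator), and summing, gives
\begin{equation*}
\mathcal K(t)\ge \mu\|\vec\varepsilon\|_{\E}^2-\mu^{-1}\sum_{(n,k)\in I^0}(\vec\varepsilon,\vec\Psi_{n,k})_{\E}^2-\mu^{-1}\sum_{\pm,(n,j)\in I}(\mathcal H_n\vec\varepsilon,\vec Y^{\pm}_{n,j})_{L^2}^2-\mathcal G(t)+(\text{interaction/cross errors}).
\end{equation*}
The first orthogonality sum vanishes by~\eqref{orth}; the second sum is exactly $\sum(a^{\pm}_{n,j})^2$, bounded by $t^{-59/10}$ via~\eqref{Bootset}. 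The term $\mathcal G=2\int\varepsilon G_3\,\d x$ is bounded by $\|\vec\varepsilon\|_{\E}\|G_3\|_{L^2}\lesssim \|\vec\varepsilon\|_{\E}\sum|b_{n,k}|^2\lesssim \|\vec\varepsilon\|_{\E}t^{-18/5}$, absorbable into $\tfrac{\mu}{2}\|\vec\varepsilon\|_{\E}^2$ plus $t^{-36/5}\ll t^{-59/10}$. The cross terms from the partition of unity are supported where $|x-\bell_n t|\gtrsim \delta^2 t$, hence of size $O(t^{-2})\|\vec\varepsilon\|_{\E}^2$, absorbable as well; and the $\|\vec\varepsilon\|_{\E}^{7/3}$ cubic error is absorbed since $\|\vec\varepsilon\|_{\E}\le t^{-29/10}\ll 1$. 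This yields~\eqref{energycoer} with $\nu$ a small fixed multiple of $\mu$.

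\medskip

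\textbf{Proof of (ii) (time variation).} I would differentiate $t^2\mathcal K$ directly: $-\tfrac{\d}{\d t}(t^2\mathcal K)=-2t\mathcal K-t^2\tfrac{\d}{\d t}\mathcal K$. For the second piece I compute $\tfrac{\d}{\d t}\mathcal E$, $\tfrac{\d}{\d t}\mathcal P$, $\tfrac{\d}{\d t}\mathcal G$ using the equation~\eqref{equpe} for $\vec\varepsilon$ and the identities~\eqref{derchi} for $\chi_N$, integrating by parts. The dangerous terms are the quadratic-in-$\varepsilon$ contributions; the standard Martel--Merle / C\^ote--Martel mechanism is that the ``good'' part $-2t\mathcal K\le -2t\nu\|\vec\varepsilon\|_{\E}^2+(\dots)$ coming from $2t\|\vec\varepsilon\|_{\E}^2$ and the virial-type term produced by $\partial_t\chi_N=-\tfrac{x_1}{(1-2\delta)t^2}$ together dominate: on $\Omega(t)$ one gains a factor $\tfrac1t$ from $\partial_{x_1}\chi_N=\tfrac1{(1-2\delta)t}$, while the localization error terms (where $\nabla\chi_N=0$, i.e. near the solitons) combine with the $\mathcal H_n$-coercivity exactly as in Step (i) to produce $-c\,t^{-1}\|\vec\varepsilon\|_{\E}^2$ up to the orthogonality defects $\sum(a^{\pm}_{n,j})^2\lesssim t^{-59/10}$ and $\sum|b_{n,k}|^2$. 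The remaining error sources are: the modulation terms $\mathrm{Mod}_1,\mathrm{Mod}_2$, controlled by~\eqref{estb} so that $|\dot b_{n,k}|\lesssim \|\vec\varepsilon\|_{\E}+\sum|b_{n,k}|^2+t^{-4}$, paired against $\varepsilon,\eta$-weighted $\Psi_{n,k}$ terms giving $\lesssim \|\vec\varepsilon\|_{\E}^2+t^{-4}\|\vec\varepsilon\|_{\E}$; the interaction term $G$, estimated via~\eqref{est:G} and, for the sharper pieces, via the $G_2-G_3$ expansion~\eqref{est:G4} and Lemma~\ref{le:tech1} (the $(\ell_n-\chi_N)$-weighted bounds), giving $\lesssim t^{-4}\|\vec\varepsilon\|_{\E}+t^{-18/5}\|\vec\varepsilon\|_{\E}+t^{-8}$; and the cubic Taylor remainders from~\eqref{Taylor1}, bounded by $\|\vec\varepsilon\|_{\E}^{7/3}$ times soliton-weighted norms. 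Using $\|\vec\varepsilon\|_{\E}\le t^{-29/10}$ and $\sum|b_{n,k}|\le t^{-9/5}$ from~\eqref{Bootset}, every error is $\lesssim t^{-2}\cdot t^{-29/10}\cdot t^{-9/5}$-type or $t^{2}\cdot t^{-4}\cdot t^{-29/10}=t^{-49/10}$-type, so that after multiplying by $t^2$ all of them are $\le \nu^{-1}t^{-49/10}$. The leading negative term $-ct^{-1}\|\vec\varepsilon\|_{\E}^2\cdot t^2$ is simply discarded (it has the right sign), and one collects $-\tfrac{\d}{\d t}(t^2\mathcal K)\le \nu^{-1}t^{-49/10}$.

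\medskip

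\textbf{Main obstacle.} The delicate point is item (ii): showing that the virial/localized-coercivity term genuinely has the favorable sign and controls the quadratic error $t\|\vec\varepsilon\|_{\E}^2$, i.e. checking that the choice of $\chi_N$ in~\eqref{defchiN} with the plateau values $\ell_n$ and slope $\tfrac1{(1-2\delta)t}$ makes the cross terms between $\partial_t\chi_N$, $\partial_{x_1}\chi_N$ and the linearized operators $\mathcal H_n$ add up correctly after the Lorentz change of variables. Equivalently, one must verify that the ``energy--momentum'' functional $\mathcal F=\mathcal E+\mathcal P$ is, up to the subtracted $\mathcal G$ and the controlled errors, a conserved-in-leading-order, coercive quantity whose time derivative is quartic-small — this is where the precise power $t^{-49/10}$ (and hence the exponents $-29/10$, $-9/5$, $-6$, $-59/10$ in~\eqref{Bootset}) are pinned down, and getting the bookkeeping of the interaction terms via Lemmas~\ref{le:tech} and~\ref{le:tech1} sharp enough is the core computation.
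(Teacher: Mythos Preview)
Your outline for part (i) is essentially the paper's approach: localize, apply Proposition~\ref{procoer} on each piece, use the orthogonality~\eqref{orth} to kill the $\vec\Psi_{n,k}$ contributions, and absorb $\mathcal G$ and cubic errors. Fine.

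For part (ii) there is a genuine gap in the mechanism you describe. You write that ``$-2t\mathcal K\le -2t\nu\|\vec\varepsilon\|_{\E}^2$'' together with a virial term that ``produce[s] $-c\,t^{-1}\|\vec\varepsilon\|_{\E}^2$'' can both be discarded as having the right sign, and that $\mathcal F$ has ``time derivative [that] is quartic-small''. None of this is correct. The virial contribution to $-\tfrac{\d}{\d t}\mathcal K$ is
\[
-\mathcal J_1=\frac{1}{(1-2\delta)t}\int_{\Omega}\Bigl(\eta^2+(\partial_{x_1}\varepsilon)^2+2\tfrac{x_1}{t}(\partial_{x_1}\varepsilon)\eta-|\overline\nabla\varepsilon|^2\Bigr)\d x,
\]
which is \emph{positive} of size roughly $t^{-1}\|\vec\varepsilon\|_{\E(\Omega)}^2$; it is quadratic, not quartic, and it has the \emph{wrong} sign. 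Multiplying by $t^2$ gives a term $\sim +t\|\vec\varepsilon\|_{\E}^2$, and your bound $-2t\mathcal K\le -2t\nu\|\vec\varepsilon\|_{\E}^2$ with small $\nu$ cannot absorb it.

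What the paper actually does is prove a \emph{sharper} lower bound than~\eqref{energycoer}, namely
\[
\mathcal F(t)\ge \mathcal F_{\Omega}(t)+\mu\,\mathcal F_{\Omega^C}(t)-(\text{errors}),
\qquad
\mathcal F_\Omega=\int_\Omega\bigl(|\nabla\varepsilon|^2+\eta^2+2\chi_N(\partial_{x_1}\varepsilon)\eta\bigr)\d x,
\]
with constant $1$ in front of $\mathcal F_\Omega$ (this is~\eqref{coer:E}). From this one gets $\mathcal F_\Omega\le \mathcal K+O(t^{-59/10})+O(\delta)\|\vec\varepsilon\|_{\E}^2$, and a direct algebraic rewriting shows $-(1-2\delta)t\,\mathcal J_1\le \mathcal F_\Omega+O(\delta)\|\vec\varepsilon\|_{\E}^2$. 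Combining these yields $-\mathcal J_1\le 2t^{-1}\mathcal K+O(t^{-69/10})$, so that $-2t\mathcal K$ and $-t^2\mathcal J_1$ cancel \emph{exactly} (up to $O(\delta)$), not by a small-constant coercivity argument. You need to state and use this refined inequality with constant $1$; the rest of your error bookkeeping for $\mathcal J_2,\dots,\mathcal J_5$, the modulation terms, and the $G$-pieces via Lemmas~\ref{le:tech} and~\ref{le:tech1} is on the right track.
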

\begin{proof}
 Proof of (i). We set
\begin{equation*}
\mathcal{F}_{\Omega}(t)=\int_{\Omega}\left(|\nabla \varepsilon|^{2}+\eta^{2}+2\big(\chi_{N}\partial_{x_{1}}\varepsilon\big)\eta\right)\d x,\quad 
\mathcal{F}_{\Omega^{C}}(t)=\int_{\Omega^{C}}\big(|\nabla \varepsilon|^{2}+\eta^{2}\big)\d x.
\end{equation*}
First, from $|\chi_{N}|\le \bar{\ell}<1$,
\begin{equation}\label{est:EOmega}
\begin{aligned}
\mathcal{F}_{\Omega}
&=\bar{\ell}\int_{\Omega}
\left(\frac{\chi_{N}}{\bar{\ell}}\partial_{x_{1}}\varepsilon+\eta\right)^{2}\d x
+\int_{\Omega}\left(\left(1-\frac{\chi_{N}^{2}}{\bar{\ell}}\right)(\partial_{x_{1}}\varepsilon)^{2}
+(1-\bar{\ell})\eta^{2}+|\overline{\nabla}\varepsilon|^{2}\right)\d x\\
&\ge \bar{\ell}\int_{\Omega}
\left(\frac{\chi_{N}}{\bar{\ell}}\partial_{x_{1}}\varepsilon+\eta\right)^{2}\d x
+(1-\bar{\ell})\int_{\Omega}\left(|\nabla\varepsilon|^{2}+\eta^{2}\right)\d x.
\end{aligned}
\end{equation}
Second, by~\eqref{Bootset} and Young's inequality for product, 
\begin{equation}\label{estG}
\big|\mathcal{G}(t)\big|
\lesssim \bigg(\sum_{(n,k)\in I^{0}}|b_{n,k}|^{2}\bigg)\|\vec{\varepsilon}\|_{\E}
\lesssim t^{-\frac{13}{2}}.
\end{equation} 
Therefore the coercivity property for $\mathcal{K}$ is a consequence of the following stronger coercivity property, for some $\alpha>0$,
\begin{equation}\label{coer:E}
\mathcal{F}(t)\ge \mathcal{F}_{\Omega}(t)+\mu \mathcal{F}_{\Omega^{C}}(t)-\mu^{-1}t^{-\frac{59}{10}}
-\mu^{-1}t^{-\alpha}\|\vec{\varepsilon}\|^{2}_{\E}-\mu^{-1}\|\vec{\varepsilon}\|_{\E}^{3}.
\end{equation}

 The coercivity property~\eqref{coer:E} is a standard consequence of the localized coercivity property around one excited solitary wave $q_{\ell}$ in Proposition~\ref{procoer} with the orthogonality relations~\eqref{orth}, and an elementary localization argument.
 We refer to the proof of~\cite[Proposition 4.2 (ii)]{MMwave1} and~\cite[Lemma 5.4 (ii)]{XY} for a similar proof.

 \smallskip
 Proof of (ii). {\bf Step 1.} Time variation of $\mathcal{E}$. We claim
 \begin{equation}\label{estdE}
 \begin{aligned}
 \frac{\d }{\d t}\mathcal{E}
 =&2\int_{\RR}{\rm{Mod}}_{1}\left(\Delta\varepsilon+f'(U)\varepsilon\right)\d x
 -2\int_{\RR}\eta {\rm{Mod}_{2}}\d x+2\int_{\RR}\eta G_{3}\d x\\
 +&2\sum_{n=1}^{N}\int_{\RR}\ell_{n}\partial_{x_{1}}Q_{n}
 \left(f(U+V+\varepsilon)-f(U+V)-f'(U+V)\varepsilon\right)\d x
 +O\big(t^{-\frac{69}{10}}\big).
 \end{aligned}
 \end{equation}
 We decompose
\begin{align*}
\frac{\rm{d}}{{\rm{d}}t}\mathcal{E}=\mathcal{I}_{1}+\mathcal{I}_{2}+\mathcal{I}_{3}+\mathcal{I}_{4},
\end{align*}
where 
\begin{equation*}
\mathcal{I}_{1}=-2\int_{\RR}\partial_{t}U\big(f(U+V+\varepsilon)-f(U+V)-f'(U+V)\varepsilon\big)\d x,
\end{equation*}
\begin{equation*}
\mathcal{I}_{2}=-2\int_{\RR}\partial_{t}V\big(f(U+V+\varepsilon)-f(U+V)-f'(U+V)\varepsilon\big)\d x,
\end{equation*}
\begin{equation*}
\mathcal{I}_{3}=2\int_{\RR}\partial_{t}\varepsilon\big(-\Delta \varepsilon-f(U+V+\varepsilon)+f(U+V)\big)\d x,\quad
\mathcal{I}_{4}=2\int_{\RR}\eta\partial_{t}\eta \d x.
\end{equation*}
\emph{Estimate on $\mathcal{I}_{1}$.} 
We claim
\begin{equation}\label{est:I1}
\mathcal{I}_{1}
=2\sum_{n=1}^{N}\int_{\RR}\ell_{n}\partial_{x_{1}}Q_{n}\big(f(U+V+\varepsilon)-f(U+V)-f'(U+V)\varepsilon\big)\d x.
\end{equation}
By direct computation, we obtain
\begin{equation*}
\partial_{t}U=\sum_{n=1}^{N}\partial_{t} Q_{n}=-\sum_{n=1}^{N}\ell_{n}\partial_{x_{1}}Q_{n},
\end{equation*}
which implies~\eqref{est:I1}.

\emph{Estimate on $\mathcal{I}_{2}$.} We claim
\begin{equation}\label{est:I2}
\left|\mathcal{I}_{2}\right|\lesssim t^{-7}.
\end{equation}
By direct computation,
\begin{equation*}
\partial_{t}V=\sum_{(n,k)\in I^{0}}\dot{b}_{n,k}\Psi_{n,k}-\sum_{(n,k)\in I^{0}}b_{n,k}\ell_{n}\partial_{x_{1}}\Psi_{n,k}.
\end{equation*}
Thus, from~\eqref{Taylor1},~\eqref{estb},~\eqref{Bootset}, and the decay properties of $\Psi_{n,k}$,
\begin{equation*}
\begin{aligned}
\left|\mathcal{I}_{2}\right|
&\lesssim\sum_{(n,k)\in I^{0}} \int_{\RR}\left(|\dot{b}_{n,k}||\Psi_{n,k}|+|b_{n,k}||\ell_{n}\partial_{x_{1}}\Psi_{n,k}|\right)
\left(\big(|U|^{\frac{1}{3}}+|V|^{\frac{1}{3}}\big)|\varepsilon|^{2}+|\varepsilon|^{\frac{7}{3}}\right)\d x\\
&\lesssim \sum_{(n,k)\in I^{0}}\left(\|\vec{\varepsilon}\|_{\E}+|b_{n,k}|+t^{-4}\right)\|\vec{\varepsilon}\|_{\E}^{2}
\lesssim t^{-\frac{87}{10}}+t^{-\frac{38}{5}}+t^{-7}\lesssim t^{-7},
\end{aligned}
\end{equation*}
which means~\eqref{est:I2}.

\emph{Estimate on $\mathcal{I}_{3}$.} We prove the following estimate
\begin{equation}\label{est:I3}
\begin{aligned}
\mathcal{I}_{3}
=&-2\int_{\mathbb{R}^{5}}\eta\big(\Delta \varepsilon+f(U+V+\varepsilon)-f(U+V)\big)\d x\\
&+2\int_{\mathbb{R}^{5}}{\rm{Mod}}_{1}\big(\Delta\varepsilon+f'(U)\varepsilon\big)\d x+O(t^{-7}).
\end{aligned}
\end{equation}
By direct computation and~\eqref{equpe},
\begin{align*}
\mathcal{I}_{3}=
&-2\int_{\mathbb{R}^{5}}\eta\big(\Delta \varepsilon+f(U+V+\varepsilon)-f(U+V)\big)\d x\\
&+2\int_{\mathbb{R}^{5}}{\rm{Mod}}_{1}\big(\Delta\varepsilon+f'(U)\varepsilon\big)\d x
+\mathcal{I}_{3,1}+\mathcal{I}_{3,2},
\end{align*}
where
\begin{equation*}
\begin{aligned}
\mathcal{I}_{3,1}&=2\int_{\R^{5}}{\rm{Mod}}_{1}\left(f'(U+V)-f'(U)\right)\varepsilon\d x,\\
\mathcal{I}_{3,2}&=2\int_{\R^{5}}{\rm{Mod}}_{1}\left(f(U+V+\varepsilon)-f(U+V)-f'(U+V)\varepsilon\right)\d x.
\end{aligned}
\end{equation*}
By the Cauchy-Schwarz inequality,~\eqref{Taylor2},~\eqref{Taylor1},~\eqref{estb} and~\eqref{Bootset},
\begin{equation*}
\begin{aligned}
\left|\mathcal{I}_{3,1}\right|&\lesssim \|{\rm{Mod}}_{1}\|_{L^{\frac{10}{3}}}\|f'(U+V)-f'(U)\|_{L^{\frac{5}{2}}}\|\varepsilon\|_{L^{\frac{10}{3}}}\\
&\lesssim \sum_{(n,k)\in I^{0}}|b_{n,k}| \bigg(\|\vec{\varepsilon}\|_{\E}+\sum_{(n',k')\in I^{0}}|b_{n',k'}|^{2}+t^{-4}\bigg)\|\vec{\varepsilon}\|_{\E}\\
&\lesssim t^{-\frac{9}{5}}\left(t^{-\frac{29}{10}}+t^{-\frac{18}{5}}+t^{-4}\right)t^{-\frac{29}{10}}\lesssim t^{-7},
\end{aligned}
\end{equation*}
\begin{equation*}
\begin{aligned}
\left|\mathcal{I}_{3,2}\right|
&\lesssim \|{\rm{Mod}}_{1}\|_{L^{\frac{10}{3}}}\|f(U+V+\varepsilon)-f(U+V)-f'(U+V)\varepsilon\|_{L^{\frac{10}{7}}}\\
&\lesssim \bigg(\|\vec{\varepsilon}\|_{\E}+\sum_{(n,k)\in I^{0}}|b_{n,k}|^{2}+t^{-4}\bigg)\|\vec{\varepsilon}\|^{2}_{\E}\lesssim  
\left(t^{-\frac{29}{10}}+t^{-\frac{18}{5}}+t^{-4}\right)t^{-\frac{29}{5}}\lesssim t^{-7}.
\end{aligned}
\end{equation*}
We see that~\eqref{est:I3} follows from above estimates.

\emph{Estimate on $\mathcal{I}_{4}$.} We claim
\begin{equation}\label{est:I4}
\begin{aligned}
\mathcal{I}_{4}
&=2\int_{\RR}\eta\big(\Delta \varepsilon+f(U+V+\varepsilon)-f(U+V)\big)\d x
\\
&-2\int_{\RR}\eta {\rm{Mod}}_{2}\d x+2\int_{\RR}\eta G_{3}\d x+O\big(t^{-\frac{69}{10}}\big).
\end{aligned}
\end{equation} 
We compute, using again~\eqref{equpe},
\begin{align*}
{\mathcal{I}_{4}}
=&2\int_{\RR}\eta\big(\Delta\varepsilon+f(U+V+\varepsilon)-f(U+V)\big)\d x\\
-&2\int_{\RR}\eta {\rm{Mod}}_{2}\d x+2\int_{\RR}\eta G_{3}\d x+2\int_{\RR}\eta G_{1}\d x+2\int_{\RR}\eta (G_{2}-G_{3})\d x.
\end{align*}
Indeed, from~\eqref{est:G123},~\eqref{est:G4} and~\eqref{Bootset}, we obtain
\begin{equation*}
\begin{aligned}
\left|\int_{\RR}\eta G_{1}\d x\right|+\bigg|\int_{\RR}\eta (G_{2}-G_{3})\d x\bigg|
&\lesssim \|\eta\|_{ L^{2}}\left(\|G_{1}\|_{L^{2}}+\|G_{2}-G_{3}\|_{L^{2}}\right)\\
&\lesssim \|\eta\|_{L^{2}}\bigg(\sum_{(n,k)\in I^{0}}|b_{n,k}|^{\frac{7}{3}}+t^{-4}\bigg)\lesssim t^{-\frac{69}{10}},
\end{aligned}
\end{equation*}
which implies~\eqref{est:I4}.

In conclusion of estimates~\eqref{est:I1},~\eqref{est:I2},~\eqref{est:I3} and~\eqref{est:I4},
 we obtain~\eqref{estdE}.

{\bf{Step 2.}} Time variation of $\mathcal{P}$. We claim
\begin{equation}\label{estdP}
\begin{aligned}
\frac{\d }{\d t}\mathcal{P}
&=-\frac{1}{(1-2\delta)t}\int_{\Omega}\big(\eta^{2}+(\partial_{x_{1}}\varepsilon)^{2}
+2\frac{x_{1}}{t}(\partial_{x_{1}}\varepsilon)\eta-|\overline{\nabla}\varepsilon|^{2}\big)\d x\\
&-2\sum_{n=1}^{N}\int_{\RR}\chi_{N}\partial_{x_{1}}Q_{n}
\big(f(U+V+\varepsilon)-f(U+V)-f'(U+V)\varepsilon\big)\d x\\
&-2\int_{\RR}\chi_{N}\left(\eta \partial_{x_{1}}{\rm{Mod}_{1}}+{\rm{Mod}}_{2}\partial_{x_{1}}\varepsilon\right)\d x+2\int_{\RR}\chi_{N}(\partial_{x_{1}}\varepsilon)G_{3}\d x +O(t^{-\frac{69}{10}}).
\end{aligned}
\end{equation}
We decompose
\begin{equation}
\begin{aligned}
\frac{\rm{d}}{\rm{d}t}\mathcal{P}
&=2\int_{\RR}(\partial_{t}\chi_{N})(\partial_{x_{1}}\varepsilon)\eta \d x+2\int_{\RR}\chi_{N}\partial_{t}\big((\partial_{x_{1}}\varepsilon)\eta\big)\d x=\mathcal{I}_{5}+\mathcal{I}_{6}.
\end{aligned}
\end{equation}
\emph{Estimate on $\mathcal{I}_{5}$.} From~\eqref{derchi}, we obtain
\begin{equation}\label{estI5}
{\mathcal{I}_{5}}=-\frac{2}{(1-2\delta)t}\int_{\Omega}\frac{x_{1}}{t}(\partial_{x_{1}}\varepsilon)\eta \d x.
\end{equation}

\emph{Estimate on $\mathcal{I}_{6}$.} We claim
\begin{equation}\label{estI6}
\begin{aligned}
{\mathcal{I}_{6}}&=-\frac{1}{(1-2\delta)t}\int_{\Omega}\big(\eta^{2}+(\partial_{x_{1}}\varepsilon)^{2}
-|\overline{\nabla}\varepsilon|^{2}\big)\d x\\
&-2\sum_{n=1}^{N}\int_{\RR}\chi_{N}\partial_{x_{1}}Q_{n}
\big(f(U+V+\varepsilon)-f(U+V)-f'(U+V)\varepsilon\big)\d x\\
&-2\int_{\RR}\chi_{N}\left(\eta \partial_{x_{1}}{\rm{Mod}_{1}}+{\rm{Mod}}_{2}\partial_{x_{1}}\varepsilon\right)\d x+2\int_{\RR}\chi_{N}(\partial_{x_{1}}\varepsilon)G_{3}\d x +O(t^{-\frac{69}{10}}).
\end{aligned}
\end{equation} 
By direct computation, we decompose
\begin{align*}
\mathcal{I}_{6}
=2\int_{\RR}\chi_{N}\left(\partial_{x_{1}}(\partial_{t}\varepsilon)\right)\eta\d x+2\int_{\RR}\chi_{N}(\partial_{x_{1}}\varepsilon)\partial_{t}\eta\d x
=\mathcal{I}_{6,1}+\mathcal{I}_{6,2}.
\end{align*}
From~\eqref{equpe},~\eqref{derchi}, and integration by parts,
\begin{equation}\label{estI61}
\mathcal{I}_{6,1}=-\frac{1}{(1-2\delta)t}\int_{\Omega}\eta^{2}\d x-2\int_{\RR}\chi_{N}\left(\eta \partial_{x_{1}}{\rm{Mod}_{1}}\right)\d x.
\end{equation}
Using again~\eqref{equpe},~\eqref{derchi}, and integration by parts,
\begin{equation}
\begin{aligned}
\mathcal{I}_{6,2}
&=-\frac{1}{(1-2\delta)t}\int_{\Omega}\left((\partial_{x_{1}}\varepsilon)^{2}-
|\overline{\nabla}\varepsilon|^{2}\right)\d x
-2\int_{\RR}\chi_{N}(\partial_{x_{1}}\varepsilon){\rm{Mod}}_{2}\d x\\
&+2\int_{\RR}\chi_{N}\left(\partial_{x_{1}}\varepsilon\right) G_{3}\d x
+2\int_{\RR}\chi_{N}\partial_{x_{1}}\varepsilon\left(f(U+V+\varepsilon)-f(U+V)\right)\d x\\
&+2\int_{\RR}\chi_{N}(\partial_{x_{1}}\varepsilon)G_{1}\d x+2\int_{\RR}\chi_{N}(\partial_{x_{1}}\varepsilon)(G_{2}-G_{3})\d x.
\end{aligned}
\end{equation}
Note that, by integration by parts,
\begin{equation*}
\begin{aligned}
&2\int_{\RR}\chi_{N}\partial_{x_{1}}\varepsilon\left(f(U+V+\varepsilon)-f(U+V)\right)\d x\\
=&-2\sum_{n=1}^{N}\int_{\RR}\chi_{N}\partial_{x_{1}}Q_{n}\big(f(U+V+\varepsilon)-f(U+V)-f'(U+V)\varepsilon\big)\d x+\mathcal{I}_{6}^{1}+\mathcal{I}_{6}^{2},
\end{aligned}
\end{equation*}
where
\begin{equation*}
\mathcal{I}_{6}^{1}=-\frac{2}{(1-2\delta)t}\int_{\Omega}\left(F(U+V+\varepsilon)-F(U+V)-f(U+V)\varepsilon\right)\d x,
\end{equation*}
\begin{equation*}
\mathcal{I}_{6}^{2}=-2\sum_{(n,k)\in I^{0}}b_{n,k}\int_{\RR}\chi_{N}\partial_{x_{1}}\Psi_{n,k}
\left(f(U+V+\varepsilon)-f(U+V)-f'(U+V)\varepsilon\right)\d x.
\end{equation*}
From~\eqref{Taylor4},~\eqref{Bootset}, the decay properties of $Q_{n}$ and $\Psi_{n,k}$ and Sobolev embedding theorem,
\begin{equation*}
	\begin{aligned}
\left|\mathcal{I}^{1}_{6}\right|
&\lesssim t^{-1}\int_{\Omega}\left((|U|^{\frac{4}{3}}+|V|^{\frac{4}{3}})\varepsilon^{2}
+\varepsilon^{\frac{10}{3}}\right)\d x\\
&\lesssim t^{-1}\left(t^{-1}\|\vec{\varepsilon}\|^{2}_{\E}+\|\vec{\varepsilon}\|_{\E}^{\frac{10}{3}}\right)
\lesssim t^{-1}(t^{-\frac{34}{5}}+t^{-\frac{29}{3}})\lesssim t^{-7}.
\end{aligned}
\end{equation*}
From~\eqref{Taylor1},~\eqref{Bootset} and Sobolev embedding theorem,
\begin{equation*}
\begin{aligned}
\left|\mathcal{I}^{2}_{6}\right|
&\lesssim \sum_{(n,k)\in I^{0}}|b_{n,k}|
\int_{\RR}|\partial_{x_{1}}\Psi_{n,k}|\left((|U|^{\frac{1}{3}}+|V|^{\frac{1}{3}})|\varepsilon|^{2}
+|\varepsilon|^{\frac{7}{3}}\right)\d x\\
&\lesssim \sum_{(n,k)\in I^{0}}|b_{n,k}|
\left(\|\vec{\varepsilon}\|_{\E}^{2}+\|\vec{\varepsilon}\|_{\E}^{\frac{7}{3}}\right)
\lesssim t^{-\frac{9}{5}}(t^{-\frac{29}{5}}+t^{-\frac{20}{3}})
\lesssim t^{-\frac{38}{5}}.
\end{aligned}
\end{equation*}
Next, by the Cauchy-Schwarz inequality,~\eqref{est:G123},~\eqref{est:G4} and~\eqref{Bootset},
\begin{equation*}
\begin{aligned}
&\left|\int_{\RR}\chi_{N}(\partial_{x_{1}}\varepsilon)G_{1}\d x\right|+\left|\int_{\RR}\chi_{N}(\partial_{x_{1}}\varepsilon)(G_{2}-G_{3})\d x\right|\\
&\lesssim
\|\partial_{x_{1}}\varepsilon\|_{L^{2}}\left(\|G_{1}\|_{L^{2}}+\|G_{2}-G_{3}\|_{L^{2}}\right)\\
&\lesssim
\|\partial_{x_{1}}\varepsilon\|_{L^{2}}\bigg(\sum_{(n,k)\in  I^{0}}|b_{n,k}|^{\frac{7}{3}}+t^{-4}\bigg)
\lesssim t^{-\frac{29}{10}}\left(t^{-\frac{21}{5}}+t^{-4}\right)
\lesssim t^{-\frac{69}{10}}.
\end{aligned}
\end{equation*}
Gathering above estimates, we obtain~\eqref{estI6}.

In conclusion of estimates~\eqref{estI5} and~\eqref{estI6}, we obtain~\eqref{estdP}.

 {\bf Step 3.} Time variation of $\mathcal{G}$. We claim
 \begin{equation}\label{estdG}
 \frac{\d}{\d t}\mathcal{G}=2\int_{\RR}\eta G_{3}\d x+2\sum_{n=1}^{N}\int_{\RR}\ell_{n}(\partial_{x_{1}}\varepsilon)G_{3,n}\d x+O(t^{-7}).
 \end{equation}
 By direct computation and integration by parts,
 \begin{equation*}
 \frac{\d}{\d t}\mathcal{G}=2\int_{\RR}\eta G_{3}\d x+2\sum_{n=1}^{N}\int_{\RR}\ell_{n}(\partial_{x_{1}}\varepsilon)G_{3,n}\d x
 +\mathcal{I}_{7}+\mathcal{I}_{8}+\mathcal{I}_{9},
 \end{equation*}
 where
  \begin{equation*}
 \mathcal{I}_{7}=\sum_{n=1}^{N}\sum_{k,k'=1}^{K_{n}}\dot{b}_{n,k}b_{n,k'}\int_{\RR}\varepsilon\big(f''(Q_{n})\Psi_{n,k}\Psi_{n,k'}\big)\d x,
 \end{equation*}
 \begin{equation*}
 \mathcal{I}_{8}=-\sum_{n=1}^{N}\sum_{k=1}^{K_{n}}\dot{b}_{n,k}\int_{\RR}\Psi_{n,k}G_{3,n}\d x\quad \mbox{and}\quad 
 \mathcal{I}_{9}=-\sum_{n\ne n'}\sum_{k=1}^{K_{n}}\dot{b}_{n,k}\int_{\RR}\Psi_{n,k}G_{3,n'}\d x.
 \end{equation*}
 First, from~\eqref{estb},~\eqref{Bootset} and Sobolev embedding,
 \begin{equation*}
 \begin{aligned}
 \left|\mathcal{I}_{7}\right|
 &\lesssim \sum_{n=1}^{N}\sum_{k,k'=1}^{K_{n}}\left|\dot{b}_{n,k}b_{n,k'}\right|
 \int_{\RR}\left|\varepsilon\right|\left|f''(Q_{n})\Psi_{n,k}\Psi_{n,k'}\right|\d x\\
 &\lesssim \sum_{(n,k)\in I^{0}}|b_{n,k}|\|\vec{\varepsilon}\|_{\E}
 \bigg(\|\vec{\varepsilon}\|_{\E}+\sum_{(n,k)\in  I^{0}}b_{n,k}^{2}+t^{-4}\bigg)\\
 &\lesssim t^{-\frac{9}{5}}\left(t^{-\frac{29}{10}}+t^{-\frac{18}{5}}+t^{-4}\right)t^{-\frac{29}{10}}\lesssim t^{-7}.
 \end{aligned}
 \end{equation*}
 Next, from (iii) of Lemma~\ref{le:Q}, we have for all $(n,k)\in I^{0}$,
 \begin{equation*}
 \begin{aligned}
 \int_{\RR}\Psi_{n,k}G_{3,n}\d x
 &=\frac{1}{2}\sum_{k',k''=1}^{K_{n}}b_{n,k'}b_{n,k''}\int_{\RR}\Psi_{n,k}
 \left(f''(Q_{n})\Psi_{n,k'}\Psi_{n,k''}\right)\d x\\
 &=\frac{1}{2}\sum_{k',k''=1}^{K}b_{n,k'}b_{n,k''}(1-\ell_{n}^{2})^{\frac{1}{2}}\int_{\RR}f''(Q)\left(\Psi_{k}\Psi_{k'}\Psi_{k''}\right)\d x=0.
 \end{aligned}
 \end{equation*}
 It follows that $\mathcal{I}_{8}=0$. Last, from Lemma~\ref{le:tech},~\eqref{estb} and~\eqref{Bootset},
 \begin{equation*}
 \begin{aligned}
\left|\mathcal{I}_{9}\right|
&\lesssim \sum_{n'\ne n}\sum_{k=1}^{K_{n}}\sum_{k',k''=1}^{K_{n'}}
\big|\dot{b}_{n,k}b_{n',k'}b_{n',k''}\big|\int_{\RR}\left|\Psi_{n,k}
\left(f''(Q_{n'})\Psi_{n',k'}\Psi_{n',k''}\right)\right|\d x\\
&\lesssim t^{-1}\sum_{(n,k)\in I^{0}}b_{n,k}^{2}\bigg(\|\vec{\varepsilon}\|_{\E}+\sum_{(n,k)\in  I^{0}}b_{n,k}^{2}+t^{-4}\bigg)\\
&\lesssim t^{-1}\left(t^{-\frac{29}{10}}+t^{-\frac{18}{5}}+t^{-4}\right)t^{-\frac{18}{5}}\lesssim t^{-7}.
\end{aligned}
 \end{equation*}
 Gathering above estimates, we obtain~\eqref{estdG}.

{\bf Step 4.} Conclusion. Combining estimates~\eqref{estdE},~\eqref{estdP} and~\eqref{estdG}, we obtain
\begin{equation*}
\frac{\d }{\d t}\mathcal{K}=\mathcal{J}_{1}+\mathcal{J}_{2}+\mathcal{J}_{3}+\mathcal{J}_{4}+\mathcal{J}_{5}
+O\big(t^{-\frac{69}{10}}\big).
\end{equation*}
where
\begin{equation*}
\mathcal{J}_{1}=-\frac{1}{(1-2\delta)t}\int_{\Omega}\left(\eta^{2}+(\partial_{x_{1}}\varepsilon)^{2}
+2\frac{x_{1}}{t}(\partial_{x_{1}}\varepsilon)\eta-|\overline{\nabla}\varepsilon|^{2}\right)\d x,
\end{equation*}
\begin{equation*}
\mathcal{J}_{2}=2\int_{\RR}\left(\Delta\varepsilon+f'(U)\varepsilon\right){\rm{Mod}}_{1}\d x-2\int_{\RR}\chi_{N}(\partial_{x_{1}}\varepsilon){\rm{Mod}}_{2}\d x,
\end{equation*}
\begin{equation*}
\mathcal{J}_{3}=2\sum_{n=1}^{N}\int_{\RR}\left(\ell_{n}-\chi_{N}\right)\partial_{x_{1}}Q_{n}
\left(f(U+V+\varepsilon)-f(U+V)-f'(U+V)\varepsilon\right)\d x,
\end{equation*}
\begin{equation*}
\mathcal{J}_{4}=-2\int_{\RR}\eta \left({\rm{Mod}}_{2}+\chi_{N}\partial_{x_{1}}{\rm{Mod}}_{1}\right)\d x,\ \
\mathcal{J}_{5}=2\sum_{n=1}^{N}\int_{\RR}\left(\chi_{N}-\ell_{n}\right)(\partial_{x_{1}}\varepsilon)G_{3,n}\d x.
\end{equation*}
\emph{Estimate on $\mathcal{J}_{1}$.} For $0<\delta\ll 1$ small enough and $T_{0}$ large enough, we claim
\begin{equation}\label{est:J1}
-\mathcal{J}_{1}\le 2t^{-1}\mathcal{K}+O\big(t^{-\frac{69}{10}}\big).
\end{equation}
From~\eqref{energycoer},~\eqref{est:EOmega},~\eqref{estG} and~\eqref{coer:E}, we have
\begin{equation*}
\begin{aligned}
-(1-2\delta)t\mathcal{J}_{1}
=&\bar{\ell}\int_{\Omega}
\left(\frac{\chi_{N}}{\bar{\ell}}\partial_{x_{1}}\varepsilon+\eta\right)^{2}\d x
+\int_{\Omega}\left(\left(1-\frac{\chi_{N}^{2}}{\bar{\ell}}\right)(\partial_{x_{1}}\varepsilon)^{2}
-|\overline{\nabla}\varepsilon|^{2}\right)\d x\\
&+(1-\bar{\ell})\int_{\Omega}\eta^{2}\d x+2\int_{\Omega}\left(\frac{x_{1}}{t}-\chi_{N}\right)(\partial_{x_{1}}\varepsilon)\eta \d x\\
\le& \mathcal{F}_{\Omega}+O\left(\delta\|\vec{\varepsilon}\|_{\E (\Omega)}^{2}\right)
\le (1+O(\delta+T^{-\alpha}_{0}))\mathcal{K}+O\big(t^{-\frac{59}{10}}\big).
\end{aligned}
\end{equation*}
Letting $\delta$ small enough and $T_{0}$ large enough in above estimate, we obtain~\eqref{est:J1}.

\emph{Estimate on $\mathcal{J}_{2}$.}
By integration by parts,~\eqref{derchi} and 
$-(1-\ell_{n}^{2})\partial_{x_{1}}^{2}\Psi_{n,k}-\bar{\Delta}\Psi_{n,k}-f'(Q_{n})\Psi_{n,k}=0$ for all $(n,k)\in I^{0}$, we have
\begin{equation*}
\mathcal{J}_{2}=\mathcal{J}_{2,1}+\mathcal{J}_{2,2}+\mathcal{J}_{2,3},
\end{equation*}
where
\begin{equation*}
	\begin{aligned}
\mathcal{J}_{2,1}&=\frac{2}{(1-2\delta)t}\int_{\Omega}\varepsilon{\rm{Mod}}_{2}\d x,\\
\mathcal{J}_{2,2}&=2\sum_{(n,k)\in I^{0}}\dot{b}_{n,k}
\int_{\RR}\varepsilon\left(\ell_{n}-\chi_{N}\right)\ell_{n}\partial_{x_{1}}^{2}\Psi_{n,k}\d x,\\
\mathcal{J}_{2,3}&=2\sum_{(n,k)\in I^{0}}\dot{b}_{n,k}\int_{\RR}\varepsilon
\left(f'(U)-f'(Q_{n})\right)\Psi_{n,k}\d x.
\end{aligned}
\end{equation*}
From the Cauchy-Schwarz inequality,~\eqref{estb},~\eqref{Bootset},~\eqref{estL710} and the decay properties of $\Psi_{n,k}$,
\begin{equation*}
\begin{aligned}
\left|\mathcal{J}_{2,1}\right|
&\lesssim t^{-1}\|\varepsilon\|_{L^{\frac{10}{3}}}\sum_{(n,k)\in I^{0}}|\dot{b}_{n,k}|\|\partial_{x_{1}}\Psi_{n,k}\|_{L^{\frac{10}{7}}(\Omega)}\\
&\lesssim t^{-\frac{3}{2}}\|\vec{\varepsilon}\|_{\E}
\bigg(\|\vec{\varepsilon}\|_{\E}+\sum_{(n,k)\in I^{0}}|b_{n,k}|^{2}+t^{-4}\bigg)\lesssim t^{-7},
\end{aligned}
\end{equation*}
\begin{equation*}
\begin{aligned}
\left|\mathcal{J}_{2,2}\right|&\lesssim \|\varepsilon\|_{L^{\frac{10}{3}}}\sum_{(n,k)\in I^{0}}|\dot{b}_{n,k}|\left\|(\ell_{n}-\chi_{N})\partial^{2}_{x_{1}}\Psi_{n,k}\right\|_{L^{\frac{10}{7}}}\\
&\lesssim t^{-\frac{3}{2}}\|\vec{\varepsilon}\|_{\E}
\bigg(\|\vec{\varepsilon}\|_{\E}+\sum_{(n,k)\in I^{0}}|b_{n,k}|^{2}+t^{-4}\bigg)\lesssim t^{-7}.
\end{aligned}
\end{equation*}
From (i) of Lemma~\ref{le:tech} and~\eqref{Taylor1},
\begin{equation*}
\begin{aligned}
&\left\|\left(f'(U)-f'(Q_{n})\right)\Psi_{n,k}\right\|_{L^{\frac{10}{7}}}\\
&\lesssim \sum_{n'\ne n}\left\||Q_{n}|^{\frac{1}{3}}|Q_{n'}|
\Psi_{n,k}\right\|_{L^{\frac{10}{7}}}+
\sum_{n'\ne n}\left\||Q_{n'}|^{\frac{4}{3}}
\Psi_{n,k}\right\|_{L^{\frac{10}{7}}}\lesssim t^{-2}.
\end{aligned}
\end{equation*}
Thus, by~\eqref{estb},~\eqref{Bootset} and Cauchy-Schwarz inequality,
\begin{equation*}
\begin{aligned}
\left|\mathcal{J}_{2,3}\right|
&\lesssim 
\|\varepsilon\|_{L^{\frac{10}{3}}}\sum_{(n,k)\in I^{0}}|\dot{b}_{n,k}|\left\|\left(f'(U)-f'(Q_{n})\right)\Psi_{n,k}\right\|_{L^{\frac{10}{7}}}\\
 &\lesssim t^{-2}\|\vec{\varepsilon}\|_{\E}
\bigg(\|\vec{\varepsilon}\|_{\E}+\sum_{(n,k)\in I^{0}}|b_{n,k}|^{2}+t^{-4}\bigg)\lesssim t^{-7}.
\end{aligned}
\end{equation*}
Gathering above estimates, we obtain
\begin{equation}\label{est:J2}
\left|\mathcal{J}_{2}\right|
\lesssim \left|\mathcal{J}_{2,1}\right|+\left|\mathcal{J}_{2,2}\right|+\left|\mathcal{J}_{2,3}\right|\lesssim t^{-7}.
\end{equation}

\emph{Estimate on $\mathcal{J}_{3}$.} From~\eqref{Taylor1},~\eqref{Bootset},~\eqref{estL310}, Cauchy-Schwarz inequality and the decay properties of $Q_{n}$,
\begin{equation}\label{est:J3}
\begin{aligned}
\left|\mathcal{J}_{3}\right|
&\lesssim 
\sum_{n=1}^{N}\|(\ell_{n}-\chi_{N})\partial_{x_{1}}Q_{n}\|_{L^{\frac{10}{3}}}
\left\|\left(|U|^{\frac{1}{3}}+|V|^{\frac{1}{3}}\right)|\varepsilon|^{2}+|\varepsilon|^{\frac{7}{3}}\right\|_{L^{\frac{10}{7}}}\\
&\lesssim
\|\vec{\varepsilon}\|_{\E}^{2}\sum_{n=1}^{N}\|(\ell_{n}-\chi_{N})\partial_{x_{1}}Q_{n}\|_{L^{\frac{10}{3}}}
\lesssim t^{-7}.
\end{aligned}
\end{equation}

\emph{Estimate on $\mathcal{J}_{4}$.} Note that, 
\begin{equation*}
{\rm{Mod}}_{2}+\chi_{N}\partial_{x_{1}}{\rm{Mod}}_{1}
=-\sum_{(n,k)\in I^{0}}\dot{b}_{n,k}\left(\ell_{n}-\chi_{N}\right)\partial_{x_{1}}\Psi_{n,k}.
\end{equation*}
Therefore, by Cauchy-Schwarz inequality,~\eqref{estb},~\eqref{Bootset} and~\eqref{estL2},
\begin{equation}\label{est:J4}
\begin{aligned}
\left|\mathcal{J}_{4}\right|
&\lesssim \|\eta\|_{L^{2}}\sum_{(n,k)\in I^{0}}|\dot{b}_{n,k}|\|\left(\ell_{n}-\chi_{N}\right)\partial_{x_{1}}\Psi_{n,k}\|_{L^{2}}\\
&\lesssim t^{-\frac{3}{2}}\|\vec{\varepsilon}\|_{\E}
\bigg(\|\vec{\varepsilon}\|_{\E}+\sum_{(n,k)\in I^{0}}|b_{n,k}|^{2}+t^{-4}\bigg)\lesssim t^{-7}.
\end{aligned}
\end{equation}
\emph{Estimate on $\mathcal{J}_{5}$.} From~\eqref{Bootset},~\eqref{estL2} and Cauchy-Schwarz inequality,
\begin{equation}\label{est:J5}
\left|\mathcal{J}_{5}\right|
\lesssim \|\partial_{x_{1}}\varepsilon\|_{L^{2}}\sum_{(n,k)\in I^{0}}|b_{n,k}|^{2}
\|(\ell_{n}-\chi_{N})f''(Q_{n})\Psi_{n,k}^{2}\|_{L^{2}}\lesssim t^{-7}.
\end{equation}

In conclusion of estimates~\eqref{est:J1},~\eqref{est:J2},~\eqref{est:J3},~\eqref{est:J4} and~\eqref{est:J5}, for $\delta$ small enough and $T_{0}$ large enough, we obtain
\begin{equation}
-\frac{\rm{d}}{\rm{d}t}\mathcal{K}\le 2t^{-1}\mathcal{K}+O\big(t^{-\frac{69}{10}}\big),
\end{equation}
which implies~\eqref{timevariation}.
\end{proof}
\subsection{End of the proof of Proposition~\ref{main:pro}}\label{sec:pro}
We start by improving all the estimates in~\eqref{Bootset} except the ones for the unstable directions $(a_{n,j}^{+})_{(n,j)\in I}$.
\begin{lemma}[Closing estimates except $(a_{n,j}^{+})_{(n,j)\in I}$]\label{le:close}
	For all $t\in [T_{*},T_{m}]$,
	\begin{equation}\label{closba}
	\|\vec{\varepsilon}(t)\|_{\E}\le \frac{1}{2}t^{-\frac{29}{10}},\quad \sum_{(n,k)\in I^{0}}|b_{n,k}(t)|\le \frac{1}{2}t^{-\frac{9}{5}} ,\quad 
	\sum_{(n,j)\in I}|a_{n,j}^{-}(t)|^{2}\le \frac{1}{2}t^{-\frac{59}{10}}.
	\end{equation}
\end{lemma}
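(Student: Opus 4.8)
The plan is to improve the three bootstrap bounds in~\eqref{Bootset} one after another, in the order $\vec{\varepsilon}$, then $\bb$, then $(a_{n,j}^{-})_{(n,j)\in I}$, each time integrating the relevant differential inequality \emph{from} the endpoint $t=T_{m}$, where by Lemma~\ref{le:ini} the data is well prepared: $b_{n,k}(T_{m})=0$ for all $(n,k)\in I^{0}$, and
\begin{equation*}
\|\vec{\varepsilon}(T_{m})\|_{\E}\lesssim \sum_{(n,j)\in I}|\tilde{a}_{n,j}|+\sum_{(n,k)\in I^{0}}|\tilde{b}_{n,k}|\lesssim |\boldsymbol{a}_{m}|\le T_{m}^{-3}.
\end{equation*}
All inequalities below are used on $[T_{*},T_{m}]$, where the decomposition of Proposition~\ref{le:deco} is valid and~\eqref{Bootset},~\eqref{estb},~\eqref{equa+-} and Proposition~\ref{mainpro} hold; all constants are independent of $m$, and $T_{0}$ will be taken large depending only on those constants.

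\emph{Step 1: the estimate on $\vec{\varepsilon}$.} Since $b_{n,k}(T_{m})=0$ we have $\mathcal{G}(T_{m})=0$, and by~\eqref{Taylor4}, Cauchy--Schwarz and Sobolev embedding $|\mathcal{E}(T_{m})|+|\mathcal{P}(T_{m})|\lesssim \|\vec{\varepsilon}(T_{m})\|_{\E}^{2}\lesssim T_{m}^{-6}$, so $T_{m}^{2}|\mathcal{K}(T_{m})|\lesssim T_{m}^{-4}$. Integrating~\eqref{timevariation} on $[t,T_{m}]$,
\begin{equation*}
t^{2}\mathcal{K}(t)\le T_{m}^{2}\mathcal{K}(T_{m})+\nu^{-1}\int_{t}^{T_{m}}s^{-\frac{49}{10}}\,\d s\lesssim T_{m}^{-4}+\nu^{-1}t^{-\frac{39}{10}},
\end{equation*}
and dividing by $t^{2}$, using $T_{m}^{-4}\le t^{-4}$ (valid as $t\le T_{m}$), we get $\mathcal{K}(t)\lesssim t^{-\frac{59}{10}}$ for $t\ge T_{0}$. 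By the coercivity~\eqref{energycoer}, $\|\vec{\varepsilon}(t)\|_{\E}^{2}\lesssim t^{-\frac{59}{10}}$, hence $\|\vec{\varepsilon}(t)\|_{\E}\lesssim t^{-\frac{59}{20}}$; since $\frac{59}{20}>\frac{29}{10}$, this is $\le \frac12 t^{-\frac{29}{10}}$ once $T_{0}$ is large.

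\emph{Step 2: the estimates on $\bb$ and $(a_{n,j}^{-})$.} Feeding the bound from Step~1 and~\eqref{Bootset} into~\eqref{estb},
\begin{equation*}
\sum_{(n,k)\in I^{0}}|\dot{b}_{n,k}(t)|\lesssim \|\vec{\varepsilon}(t)\|_{\E}+\sum_{(n,k)\in I^{0}}|b_{n,k}|^{2}+t^{-4}\lesssim t^{-\frac{59}{20}};
\end{equation*}
since $b_{n,k}(T_{m})=0$, integrating on $[t,T_{m}]$ gives $\sum_{(n,k)\in I^{0}}|b_{n,k}(t)|\lesssim t^{-\frac{39}{20}}\le \frac12 t^{-\frac95}$ for $T_{0}$ large. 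Next, by~\eqref{equa+-} and the two improved bounds, $a_{n,j}^{-}$ solves $\frac{\d}{\d t}a_{n,j}^{-}-\alpha_{n,j}a_{n,j}^{-}=g_{n,j}$ with $|g_{n,j}(t)|\lesssim \|\vec{\varepsilon}\|_{\E}^{2}+\sum_{(n,k)\in I^{0}}|b_{n,k}|^{2}+t^{-4}\lesssim t^{-\frac{39}{10}}$. Solving this scalar linear ODE backward from $T_{m}$,
\begin{equation*}
a_{n,j}^{-}(t)=e^{\alpha_{n,j}(t-T_{m})}a_{n,j}^{-}(T_{m})-\int_{t}^{T_{m}}e^{\alpha_{n,j}(t-s)}g_{n,j}(s)\,\d s,
\end{equation*}
and since $\alpha_{n,j}>0$ and $t\le s\le T_{m}$, the first term is $\le |a_{n,j}^{-}(T_{m})|\lesssim \|\vec{\varepsilon}(T_{m})\|_{\E}\lesssim T_{m}^{-3}\le t^{-3}$, while the integral is $\lesssim t^{-\frac{39}{10}}\int_{t}^{T_{m}}e^{\alpha_{n,j}(t-s)}\,\d s\lesssim t^{-\frac{39}{10}}$. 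Hence $|a_{n,j}^{-}(t)|\lesssim t^{-3}$ and $\sum_{(n,j)\in I}|a_{n,j}^{-}(t)|^{2}\lesssim t^{-6}\le \frac12 t^{-\frac{59}{10}}$ for $T_{0}$ large, since $6>\frac{59}{10}$. This gives~\eqref{closba}.

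\emph{On the difficulty.} The real content has already been invested in Propositions~\ref{le:deco} and~\ref{mainpro} — the modulation equations, the localized coercivity, and the monotonicity of the energy functional $\mathcal{K}$; this lemma is essentially a bookkeeping step where the only delicate point is tracking the powers so that each quantity gains a definite exponent when integrated from $T_{m}$. The reason the unstable parameters $(a_{n,j}^{+})_{(n,j)\in I}$ are excluded here is that their equation is backward-unstable — the homogeneous solution $e^{-\alpha_{n,j}(t-T_{m})}$ grows as $t$ decreases — so a generic small datum $\boldsymbol{a}_{m}=(a_{n,j}^{+}(T_{m}))$ cannot be propagated down to $T_{0}$; this is overcome by choosing $\boldsymbol{a}_{m}$ through a topological (Brouwer-type) argument in the remaining part of the proof of Proposition~\ref{main:pro}.
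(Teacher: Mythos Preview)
Your proof is correct and follows essentially the same approach as the paper: integrate the energy inequality~\eqref{timevariation} backward from $T_{m}$ to close the $\vec{\varepsilon}$ bound via coercivity, then integrate~\eqref{estb} and~\eqref{equa+-} to close the $\bb$ and $(a_{n,j}^{-})$ bounds. The only cosmetic differences are that the paper feeds the raw bootstrap bound $\|\vec{\varepsilon}\|_{\E}\le t^{-29/10}$ (rather than your improved $t^{-59/20}$) into~\eqref{estb}, and that it differentiates $e^{-2\alpha_{n,j}t}(a_{n,j}^{-})^{2}$ instead of writing the Duhamel formula for $a_{n,j}^{-}$; neither change affects the argument.
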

\begin{proof}
	{\bf{Step 1.}} Bound on the energy norm. First, from~\eqref{est:ab},~\eqref{def:eTm} and $\boldsymbol{a}_{m}\in B_{\R^{|I|}}\big(T_{m}^{-3}\big)$, we have
	\begin{equation*}
	\left|\mathcal{K}(T_{m})\right|\lesssim \left|\mathcal{E}(T_{m})\right|
	+\left|\mathcal{P}(T_{m})\right|+\left|\mathcal{G}(T_{m})\right|\lesssim \|\vec{\varepsilon}(T_{m})\|^{2}_{\E}\lesssim T_{m}^{-6}.
	\end{equation*}
	Thus, integrating~\eqref{timevariation} on $[t, T_{m}]$ for all $t\in [T_{*},T_{m}]$, 
	\begin{equation*}
	\mathcal{K}(t)\le (t/T_{m})^{-2}\mathcal{K}(T_{m})+\nu^{-1}t^{-\frac{59}{10}}
	\lesssim t^{-\frac{59}{10}}.
	\end{equation*}
	Using~\eqref{energycoer}, we have
	\begin{equation*}
	\|\vec{\varepsilon}(t)\|_{\E}^{2}\lesssim \mathcal{K}(t)+t^{-\frac{59}{10}}
	\lesssim t^{-\frac{59}{10}}.
	\end{equation*}
	Letting $T_{0}$ large enough, we conclude~\eqref{closba} on energy norm.
	
	{\bf{Step 2.}} Parameter estimates. Using again~\eqref{est:ab},~\eqref{def:eTm} and $\boldsymbol{a}_{m}\in B_{\R^{|I|}}\big(T_{m}^{-3}\big)$,
	\begin{equation}\label{iniab}
	\sum_{(n,k)\in I^{0}}|b_{n,k}(T_{m})|+\sum_{(n,j)\in I}|a_{n,j}^{-}(T_{m})|\lesssim T_{m}^{-3}.
	\end{equation}
	Integrating~\eqref{estb} on $[t, T_{m}]$ for all $t\in [T_{*},T_{m}]$ and using~\eqref{Bootset},~\eqref{iniab}, we obtain
	\begin{equation*}
	\sum_{(n,k)\in I^{0}}|b_{n,k}(t)|\lesssim \sum_{(n,k)\in I^{0}}|b_{n,k}(T_{m})|
	+\int_{t}^{T_{m}}s^{-\frac{29}{10}}\d s\lesssim t^{-\frac{19}{10}}.
	\end{equation*}
	Letting $T_{0}$ large enough, we conclude~\eqref{closba} for $(b_{n,k})_{(n,k)\in I^{0}}$.
	
	Now we prove the bound on $(a_{n,j}^{-}(t))_{(n,j)\in I}$. By direct computation,~\eqref{equa+-} and~\eqref{Bootset},
	\begin{equation*}
	\begin{aligned}
	\frac{\d }{\d t}\left(e^{-2\alpha_{n,j}t}(a_{n,j}^{-}(t))^{2}\right)
	&=e^{-2\alpha_{n,j}t}a_{n,j}^{-}(t)\left(\frac{\d}{\d t}a_{n,j}^{-}-\alpha_{n,j}a_{n,j}^{-}\right)\\
	&=e^{-2\alpha_{n,j}t}O\bigg(|a_{n,j}^{-}|\big(\|\vec{\varepsilon}\|_{\E}^{2}
	+\sum_{(n,k)\in I^{0}}|b_{n,k}|^{2}+t^{-4}\big)\bigg)\\
	&=e^{-2\alpha_{n,j}t}O\left(t^{-\frac{29}{10}}\left(t^{-\frac{29}{5}}+t^{-\frac{18}{5}}+t^{-4}\right)\right)=e^{-2\alpha_{n,j}t}O\left(t^{-6}\right).
	\end{aligned}
	\end{equation*}
	Integrating on $[t,T_{m}]$ for all $t\in [T_{*},T_{m}]$ and using~\eqref{iniab}, we obtain
	\begin{equation*}
	(a_{n,j}^{-}(t))^{2}\lesssim e^{-2\alpha_{n,j}(T_{m}-t)}(a_{n,j}^{-}(T_{m}))^{2}
	+\int_{t}^{T_{m}} e^{2\alpha_{n,j}(t-s)}s^{-6}\d s\lesssim t^{-6},
	\end{equation*}
	which implies the estimates on $(a_{n,j}^{-})_{(n,j)\in I}$ in~\eqref{closba} for taking $T_{0}$ large enough.
	\end{proof}
Last, we prove the existence of suitable parameters $\boldsymbol{a}_{m}=(a^{m}_{n,j})_{(n,j)\in I}$ by
contradiction, using a topological argument.
\begin{lemma}[Control of unstable directions]\label{le:uni}
	There exist $\boldsymbol{a}_{m}=(a_{n,j}^{m})\in B_{\R^{|I|}}(T^{-3}_{m})$ such that, for $T_{0}$ large enough, $T_{*}(\boldsymbol{a}_{m})=T_{0}$. In particular, the solution $\vec{u}_{m}$ with the initial data $\vec{u}_{m}(T_{m})$ given by Lemma~\ref{le:ini} satisfies~\eqref{uniest}.
\end{lemma}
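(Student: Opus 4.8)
The plan is to conclude by the topological (Brouwer) argument on the unstable parameters used in~\cite{CMM,CMakg,MMwave1}. Fix $T_{0}\gg1$, independent of $m$, large enough for all the estimates below, and argue by contradiction: assume that $T_{*}(\boldsymbol{a}_{m})>T_{0}$ for every $\boldsymbol{a}_{m}\in \overline{B}_{\R^{|I|}}(T_{m}^{-3})$ (on the boundary sphere this will turn out to hold automatically). The first point is to identify which bound in~\eqref{Bootset} controls the exit time $T_{*}$. By Lemma~\ref{le:close}, the bounds on $\|\vec{\varepsilon}\|_{\E}$, on $\sum_{(n,k)\in I^{0}}|b_{n,k}|$ and on $\sum_{(n,j)\in I}|a_{n,j}^{-}|^{2}$ are strictly improved with an extra factor $\tfrac12$ on all of $[T_{*},T_{m}]$, hence remain strictly satisfied slightly below $T_{*}$. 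Since $T_{*}>T_{0}$, minimality of $T_{*}$ forces equality in one of the remaining inequalities of~\eqref{Bootset} at $t=T_{*}$, so necessarily
\begin{equation*}
\mathcal{N}(T_{*})=T_{*}^{-6},\qquad \mathcal{N}(t):=\sum_{(n,j)\in I}|a_{n,j}^{+}(t)|^{2}.
\end{equation*}

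The key step is a strict transversality (``outgoing'') property at such times. Using~\eqref{equa+-}, at any $t\in[T_{0},T_{m}]$ with $\mathcal{N}(t)=t^{-6}$ and~\eqref{Bootset} in force, I would compute
\begin{equation*}
\frac{\d}{\d t}\bigl(t^{6}\mathcal{N}(t)\bigr)
=6t^{-1}-2t^{6}\sum_{(n,j)\in I}\alpha_{n,j}|a_{n,j}^{+}(t)|^{2}
+O\!\left(t^{6}\mathcal{N}(t)^{\frac12}\Bigl(\|\vec{\varepsilon}\|_{\E}^{2}+\textstyle\sum_{(n,k)\in I^{0}}|b_{n,k}|^{2}+t^{-4}\Bigr)\right).
\end{equation*}
On $\mathcal{N}(t)=t^{-6}$, the middle term is $\le-2\min_{(n,j)}\alpha_{n,j}<0$, the first term equals $6t^{-1}$, and~\eqref{Bootset} makes the error $o(1)$ as $t\to\infty$ (in fact $O(t^{-3/5})$); hence for $T_{0}$ large enough,
\begin{equation*}
\frac{\d}{\d t}\bigl(t^{6}\mathcal{N}(t)\bigr)\le-\min_{(n,j)}\alpha_{n,j}<0 .
\end{equation*}
Thus $g(t):=t^{6}\mathcal{N}(t)$ can cross the level $1$ only transversally, with $g>1$ for $t$ slightly below the crossing time; therefore $T_{*}(\boldsymbol{a}_{m})$ is a genuine first-exit time, and, using Lemma~\ref{le:ini} together with continuous dependence of the flow of~\eqref{wave} on the data, the maps $\boldsymbol{a}_{m}\mapsto T_{*}(\boldsymbol{a}_{m})$ and $\boldsymbol{a}_{m}\mapsto a^{+}(T_{*}(\boldsymbol{a}_{m}))$ are continuous. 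In particular, on the boundary $|\boldsymbol{a}_{m}|=T_{m}^{-3}$ one has $g(T_{m})=1$ (since $a_{n,j}^{+}(T_{m})=a_{n,j}^{m}$ by Lemma~\ref{le:ini}), so the transversality forces $T_{*}(\boldsymbol{a}_{m})=T_{m}$ there.

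Finally, I would set up the forbidden retraction. Define
\begin{equation*}
\Phi(\boldsymbol{a}_{m})=\left(\frac{T_{*}(\boldsymbol{a}_{m})}{T_{m}}\right)^{3}\bigl(a_{n,j}^{+}(T_{*}(\boldsymbol{a}_{m}))\bigr)_{(n,j)\in I}\in\R^{|I|}.
\end{equation*}
Since $\mathcal{N}(T_{*})=T_{*}^{-6}$, the map $\Phi$ sends $\overline{B}_{\R^{|I|}}(T_{m}^{-3})$ continuously into the sphere of radius $T_{m}^{-3}$, and $\Phi$ restricts to the identity on that sphere; this contradicts the no-retraction theorem (equivalently, Brouwer's fixed point theorem). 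Hence there exists $\boldsymbol{a}_{m}\in B_{\R^{|I|}}(T_{m}^{-3})$ with $T_{*}(\boldsymbol{a}_{m})=T_{0}$, so~\eqref{Bootset} holds on all of $[T_{0},T_{m}]$. Then, by~\eqref{defe} and Lemma~\ref{le:close}, for $t\in[T_{0},T_{m}]$,
\begin{equation*}
\Bigl\|\vec{u}_{m}(t)-\sum_{n=1}^{N}\vec{Q}_{n}(t)\Bigr\|_{\E}\lesssim\|\vec{\varepsilon}(t)\|_{\E}+\sum_{(n,k)\in I^{0}}|b_{n,k}(t)|\lesssim t^{-\frac{9}{5}}\le t^{-\frac{8}{5}},
\end{equation*}
for $T_{0}$ large enough, which is~\eqref{uniest}. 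The main obstacle is the transversality step: one must check that the contributions of the soliton interactions and of the modulation terms in~\eqref{equa+-} are genuinely of lower order than the gain $-2t^{6}\sum_{(n,j)\in I}\alpha_{n,j}|a_{n,j}^{+}|^{2}$ on the exit sphere $\mathcal{N}=t^{-6}$; the remaining points (continuity of $T_{*}$, construction of the retraction) are soft.
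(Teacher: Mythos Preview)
Your proposal is correct and follows essentially the same approach as the paper: the transversality estimate $\frac{\d}{\d t}(t^{6}\mathcal{N}(t))\le -\min_{(n,j)}\alpha_{n,j}$ on the level set $\mathcal{N}(t)=t^{-6}$, the identification of the exit through the $a^{+}$-bound via Lemma~\ref{le:close}, and the contradiction by the no-retraction theorem using the map $\Phi$ all match the paper's argument. Your write-up is in fact slightly more explicit than the paper's on two points (that saturation must occur in the $a^{+}$-inequality, and the deduction of~\eqref{uniest} from~\eqref{defe} and the bootstrap bounds), but the underlying ideas and computations are identical.
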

Note that Lemma~\ref{le:uni} completes the proof of Proposition~\ref{main:pro}.
\begin{proof}
	Let 
	\begin{equation*}
	a(t)=\sum_{(n,j)\in I}(a_{n,j}^{+}(t))^{2}\quad \mbox{and}\quad 
	\bar{\alpha}=\min_{(n,j)\in I} \alpha_{n,j}>0.
	\end{equation*}
	We claim the following transversality property, for any $t\in [T_{*},T_{m}]$ where it holds $a(t)=t^{-6}$,
	we have
	\begin{equation}\label{trans}
	\frac{\d}{\d t}(t^{6}a(t))\le -\bar{\alpha}.
	\end{equation}
	Indeed, from~\eqref{equa+-} and~\eqref{Bootset}, for any $t\in [T_{*},T_{m}]$ where it holds $a(t)=t^{-6}$, we obtain
	\begin{equation*}
	\begin{aligned}
	\frac{\d}{\d t}a(t)&=
	-2\sum_{(n,j)\in I}\alpha_{n,j}(a_{n,j}^{+}(t))^{2}
	+O\bigg(\|\vec{\varepsilon}\|_{\E}\bigg(\|\vec{\varepsilon}\|_{\E}^{2}+\sum_{(n,k)\in I^{0}}b_{n,k}^{2}+t^{-4}\bigg)\bigg)\\
	&=-2\sum_{(n,j)\in I}\alpha_{n,j}(a_{n,j}^{+}(t))^{2}+O\left(t^{-\frac{29}{10}}\left(t^{-\frac{29}{5}}+t^{-\frac{18}{5}}+t^{-4}\right)\right)\\
	&\le -2\bar{\alpha}t^{-6}+O(t^{-\frac{13}{2}})\le -\frac{3}{2}\bar{\alpha}t^{-6}.
	\end{aligned}
	\end{equation*}
	which implies~\eqref{trans} for $T_{0}$ large enough. This transversality property is enough to
	justify the existence of at least a couple $\boldsymbol{a}_{m}=(a_{n,j}^{m})\in B_{\R^{|I|}}(T^{-3}_{m})$
	such that $T_{*}(\boldsymbol{a}_{m})=T_{0}$.
	
	The proof is by contradiction, we assume that for any $\boldsymbol{a}_{m}=(a_{n,j}^{m})\in B_{\R^{|I|}}(T^{-3}_{m})$, it holds $T_{0}<T_{*}(\boldsymbol{a}_{m})\le T_{m}$. Then, a construction follows from the following discussion (see for instance	more details in~\cite{CMM,CMkg} and~\cite[Lemma 4.2]{MMwave1}).
	
	\emph{Continuity of $T_{*}$.} The above transversality property~\eqref{trans} implies that the map
	\begin{equation*}
	\boldsymbol{a}_{m}\in B_{\R^{|I|}}(T^{-3}_{m})\mapsto T_{*}(\boldsymbol{a}_{m})\in (T_{0},T_{m}]
	\end{equation*}
	is continuous and
	\begin{equation*}
	T_{*}(\boldsymbol{a}_{m})=T_{m}\quad \mbox{for}\quad  \boldsymbol{a}_{m}\in S_{\R^{|I|}}(T^{-3}_{m}).
	\end{equation*}
	
	\emph{Construction of a retraction.} We define
	\begin{equation*}
	\begin{aligned}
	\Gamma :\bar{B}_{\R^{|I|}}(T^{-3}_{m})&\mapsto S_{\R^{|I|}}(T^{-3}_{m})\\
	\boldsymbol{a}_{m}&\mapsto \left(\frac{T_{*}(\boldsymbol{a}_{m})}{T_{m}}\right)^{3}(a^{+}_{n,j}(T_{*}(\boldsymbol{a}_{m})))_{(n,j)\in I}.
	\end{aligned}
	\end{equation*}
	From what precedes, $\Gamma$ is continuous. Moreover, $\Gamma$ restricted to $S_{\R^{|I|}}(T^{-3}_{m})$
	 is the identity. The existence of such a map is contradictory with the no retraction theorem for continuous maps from the ball to the sphere. Therefore, the existence of $\boldsymbol{a}_{m}\in B_{\R^{|I|}}(T^{-3}_{m})$ have proved. Then, the uniform estimates~\eqref{uniest} is a consequence of Lemma~\ref{le:close}. The proof of Lemma~\ref{le:uni} is complete.
	\end{proof}


\begin{thebibliography}{10}
	\bibitem{Aub}T.~Aubin, 
	Probl\`emes isop\'erim\'etriques et espaces de Sobolev. (French)
	\emph{J.~Differential~Geometry} \textbf{11} (1976),~no.~4,~573-598.
	
	\bibitem{Com} V.~Combet, 
	Multi-soliton solutions for the supercritical gKdV equations.
	\emph{Comm. Partial Differential Equations} \textbf{36} (2011),~no.~3,~380--419.
	
	\bibitem{CMakg} R.~C\^ote and~Y.~Martel,
	Multi-travelling waves for the nonlinear Klein-Gordon equation. 
	\emph{Trans. Amer. Math. Soc.} \textbf{370} (2018), no.~10, 7461–7487.
	
	\bibitem{CM_erratum} R.~C\^ote and~Y.~Martel,
	Correction: ``Multi-travelling waves for the nonlinear Klein-Gordon equation''. Preprint 2020.
	
	\bibitem{CMM} R.~C\^ote, Y.~Martel and F.~Merle, 
	Construction of multi-soliton solutions for the $L^2$-supercritical gKdV and NLS equations.
	 \emph{Rev. Mat. Iberoam.} \textbf{27} (2011), no.~1, 273–302.
	
	\bibitem{CMkg} R.~C\^ote and C.~Mu\~noz, 
	Multi-solitons for nonlinear Klein-Gordon equations.
	 \emph{Forum Math. Sigma} \textbf{2} (2014), Paper No.~e15, 38 pp.
	
	\bibitem{PMPP1}M.~del~Pino,~M.~Musso, F.~Pacard and A.~Pistoia, 
	Large energy entire solutions for the Yamabe equation. 
	\emph{J. Differential Equations} \textbf{251} (2011), no.~9, 2568–2597.
	
	\bibitem{PMPP2}M.~del Pino, M.~Musso, F.~Pacard and A.~Pistoia, 
	Torus action on $S^{n}$ and sign changing solutions for conformally invariant equations.
	 \emph{Ann. Sc. Norm. Super. Pisa Cl. Sci. (5)}~12 (2013), no.~1, 209–237.
	
	\bibitem{Ding} W.~Ding, 
	On a conformally invariant elliptic equation on $\mathbb{R}^{n}$.
	\emph{Comm. Math. Phys.}~\textbf{107} (1986), no.~2, 331–335.
	
	\bibitem{DKMASNSP}T.~Duyckaerts, C.~E.~Kenig and F.~Merle,
	  Solutions of the focusing nonradial critical wave equation with the compactness property.
	  \emph{Ann. Sc. Norm. Super. Pisa Cl. Sci. (5)} \textbf{15} (2016), 731–808.
	
	\bibitem{DKMAWAVE}T.~Duyckaerts, C.~E.~Kenig and F.~Merle,
	Universality of blow-up profile for small radial type II blow-up solutions of the energy-critical wave equation.
	\emph{J.~Eur.~Math.~Soc.} (JEMS) \textbf{13} (2011), no. 3, 533–599.
	
	\bibitem{JJwave5} J.~Jendrej,
	Construction of type II blow-up solutions for the energy-critical wave equation in dimension 5. 
	\emph{J. Funct. Anal.} \textbf{272} (2017), no.~3, 866–917.
	
	\bibitem{JJ} J.~Jendrej,
	 Construction of two-bubble solutions for energy-critical wave equations. 
	 \emph{Amer. J. Math.} \textbf{141} (2019), no. 1, 55–118. 
	
	\bibitem{JeMa}J.~Jendrej and Y.~Martel,
	Construction of multi-bubble solutions for the energy-critical wave equation in dimension 5.
	\emph{J. Math. Pures Appl.} (9) \textbf{139} (2020), 317–355.
	
	
	\bibitem{KM} C.~E.~Kenig and F.~Merle,
	Global well-posedness, scattering and blow-up for the energy-critical focusing non-linear wave equation.
	\emph{Acta Math.} \textbf{201} (2008), 147--212.
	
	\bibitem{Ma} Y.~Martel,
	Asymptotic $N$-soliton-like solutions of the subcritical and critical generalized Korteweg-de Vries equations.
	\emph{Amer. J. Math.} \textbf{127} (2005), no.~5, 1103--1140.
	
	\bibitem{MMnls} Y.~Martel and F.~Merle, 
	Multi-solitary waves for nonlinear Schr\"odinger equations.
	\emph{Ann. Inst. H. Poincar\'e Anal. Non Lin\'eaire} \textbf{23} (2006), no. 6, 849–864.
	
	\bibitem{MMwave1} Y.~Martel and F.~Merle,
	Construction of multi-solitons for the energy-critical wave equation in dimension~5.
	\emph{Arch. Ration. Mech. Anal.} \textbf{222} (2016), no.~3, 1113--1160.
	
	\bibitem{MMT} Y.~Martel, F.~Merle and~ T.-P. Tsai,
	Stability and asymptotic stability in the energy space of the sum of $N$ solitons for subcritical gKdV equations.
	\emph{Comm. Math. Phys.} \textbf{231} (2002), no.~2, 347–373.
	
	\bibitem{Mnls} F.~ Merle,
	Construction of solutions with exactly $k$ blow-up points for the Schr\"odinger equation with critical nonlinearity.
	\emph{Comm. Math. Phys.} \textbf{129} (1990), no.~2, 223--240.
	
	\bibitem{Mes} V.~Z.~Meshkov, 
	Weighted differential inequalities and their application for estimates of the decrease at
	infinity of the solutions of second-order elliptic equations. 
	\emph{Trudy Mat. Inst. Steklov.} \textbf{190} (1989), 139–158.
	
	\bibitem{MRT} Mei~Ming, F.~Rousset, N.~Tzvetkov, 
	Multi-solitons and Related Solutions for the Water-waves System.
	\emph{SIAM J.~Math.~Anal.} \textbf{47} (2015), no.~1, 897-954.
	
	\bibitem{MW} M.~Musso and J.~Wei, 
	Nondegeneracy of nodal solutions to the critical Yamabe problem.
	\emph{Comm. Math. Phys.} \textbf{340} (2015), no.~3, 1049–1107.
	
	\bibitem{Talen}G.~Talenti, 
	Best constant in Sobolev inequality.
	\emph{Ann. Mat. Pura Appl.} (4) \textbf{110} (1976), 353–372.
	
		\bibitem{XY}X. Yuan.
	On multi-solitons for the energy-critical wave equation in dimension 5. 
	\emph{Nonlinearity} \textbf{32} (2019), no. 12, 5017–5048.
\end{thebibliography}
\end{document}